\definecolor{refkey}{gray}{.75}
\definecolor{labelkey}{gray}{.5}
\colorlet{DarkGreen}{green!80!black}
\colorlet{DarkBlue}{blue!50!black}
\colorlet{DarkGray}{gray!60!black}
\colorlet{DarkPurple}{purple!60!black}
\numberwithin{equation}{section}
\renewcommand{\restriction}{\mathord{\upharpoonright}}
\renewcommand{\epsilon}{\varepsilon}
\newcommand{\given}{\;\big|\;}
\newcommand{\one}{\mathbf{1}}
 \definecolor{refkey}{gray}{.5}
 \definecolor{labelkey}{gray}{.5}
\definecolor{light}{gray}{.9}
\newtheorem{maintheorem}{Theorem}
\newtheorem{theorem}{Theorem}[section]
\newtheorem*{theorem*}{Theorem}
\newtheorem{lemma}[theorem]{Lemma}
\newtheorem{claim}[theorem]{Claim}
\newtheorem{proposition}[theorem]{Proposition}
\newtheorem{fact}[theorem]{Fact}
\newtheorem{corollary}[theorem]{Corollary}
\theoremstyle{definition}{

\newtheorem*{definition*}{Definition}

}
\renewcommand{\P}{\mathbb P}
\newcommand{\cA}{\ensuremath{\mathcal A}}
\newcommand{\cC}{\ensuremath{\mathcal C}}
\newcommand{\cG}{\ensuremath{\mathcal G}}
\newcommand{\cL}{\ensuremath{\mathcal L}}
\newcommand{\sL}{{\ensuremath{\mathscr L}}}
\renewcommand{\epsilon}{\varepsilon}
\DeclareMathOperator{\Bin}{Bin}
\newcommand{\tmix}{t_{\textsc{mix}}}
\newcommand{\gap}{\text{\tt{gap}}}
\newcommand{\tv}{{\textsc{tv}}}
\newcommand{\rc}{{\textsc{rc}}}
\title[Exponentially slow mixing in mean-field Swendsen--Wang dynamics]{Exponentially slow mixing in the \\ mean-field Swendsen--Wang dynamics}
\author{Reza Gheissari}
\address{R.\ Gheissari\hfill\break
Courant Institute\\ New York University\\
251 Mercer Street\\ New York, NY 10012, USA.}
\email{reza@cims.nyu.edu}
\author{Eyal Lubetzky}
\address{E.\ Lubetzky\hfill\break
Courant Institute\\ New York University\\
251 Mercer Street\\ New York, NY 10012, USA.}
\email{eyal@courant.nyu.edu}
\author{Yuval Peres}
\address{Y.\ Peres\hfill\break
Microsoft Research\\
1 Microsoft Way\\ Redmond, WA 98052, USA.}
\email{peres@microsoft.com}
\begin{document}

\begin{abstract}
Swendsen--Wang dynamics for the Potts model was proposed in the late 1980's as an alternative to single-site heat-bath dynamics, in which global updates allow this MCMC sampler to switch between metastable states and ideally mix faster. 
Gore and Jerrum (1999) found that this dynamics may in fact exhibit slow mixing: they showed  that, for the Potts model with $q\geq 3$ colors on the complete graph on $n$ vertices at the critical point $\beta_c(q)$, Swendsen--Wang dynamics has  $\tmix\geq \exp(c\sqrt n)$. 
The same lower bound was extended to the critical window $(\beta_s,\beta_S)$ around $\beta_c$ by Galanis \emph{et al.\ }(2015), as well as to the corresponding mean-field FK model by Blanca and Sinclair~(2015).
In both cases, an upper bound of $\tmix \leq \exp(c' n)$ was known.
Here we show that the mixing time is truly exponential in $n$: namely, $\tmix \geq \exp (cn)$ for  Swendsen--Wang dynamics when $q\geq 3$ and $\beta\in(\beta_s,\beta_S)$, and the same bound holds for the related MCMC samplers for the mean-field FK model when $q>2$.
\end{abstract}

\maketitle

\section{Introduction}
The mean-field $q$-state Potts model is a canonical statistical physics model extending the Curie--Weiss Ising model ($q=2$) to $q\in\mathbb N$ possible states; for $q\geq 3$, it is one of the simplest models to exhibit a discontinuous (first-order) phase transition. Formally, the mean-field $q$-state Potts model with parameter $\beta$ is a probability distribution $\mu_{n,\beta,q}$ over $\{1,\ldots,q\}^n$, given by
$\mu_{n,\beta,q}(\sigma)\propto \exp(\frac\beta n  H(\sigma))$, where $H(\sigma)=\sum_{i<j}\one{\{\sigma_i=\sigma_j\}}$. The model exhibits a phase transition at $\beta=\beta_c(q)$ from a disordered phase ($\beta<\beta_c$), where the sizes of all $q$ color classes concentrate around $n/q$, to an ordered phase ($\beta>\beta_c$), where there is typically one color class of size $a_\beta n$ for $a_\beta>1/q$ (see~\S\ref{sec:prelims}).

As a means of overcoming low-temperature bottlenecks in the energy landscape (dominant color classes), Swendsen and Wang~\cite{SW87} introduced a non-local reversible Markov chain, relying on the random cluster (FK) representation of the Potts model.
The mean-field FK model is the generalization of $\cG(n,p)$---the Erd\H{o}s--R\'enyi random graph---parametrized by $(p=\frac{\lambda}n,q)$, in which the probability of a graph $G=(V,E)$, identified with $\omega\in\Omega_\rc :=\{0,1\}^{\binom{n}2}$, is given by  $\pi_{n,\lambda,q} (\omega)\propto p^{|E|}(1-p)^{\binom{n}2-|E|} q^{k(G)}$, where $k(G)$ is the number of connected components of $G$ (clusters of $\omega$). 

Via the Edwards--Sokal coupling~\cite{EdSo88} of the $q$-state Potts model at inverse temperature $\beta/n$ and the FK model with parameters $(p,q)$ with $p=1-e^{-\beta/n}$, the mean-field Swendsen--Wang dynamics can be formulated as follows: consider a mean-field Potts configuration $\sigma$ with $V_1,...,V_q$ being the sets of vertices $V_i=\{x:\sigma_x=i\}$. An update of the dynamics, started from $\sigma$, first samples,  independently for every $i=1,...,q$, a configuration $G_i\sim \cG(|V_i|,p)$ on the subgraph of $V_i$, forming  an FK configuration $\omega$ as the union of the $G_i$'s; then, it assigns an i.i.d.\ color $X_C\sim \mbox{Uni}(\{1,...,q\})$ to each cluster $C$ in $\omega$, and for every $x\in C$, sets $\sigma'_x=X_c$ in the new state $\sigma'$ of the Markov chain.
\begin{figure}

\begin{tikzpicture}
    \node[scale=1.2] (plot1) at (0,0) {};

	    \node (fig1) at (1.6,-1.1) {
	\includegraphics[width=75pt]{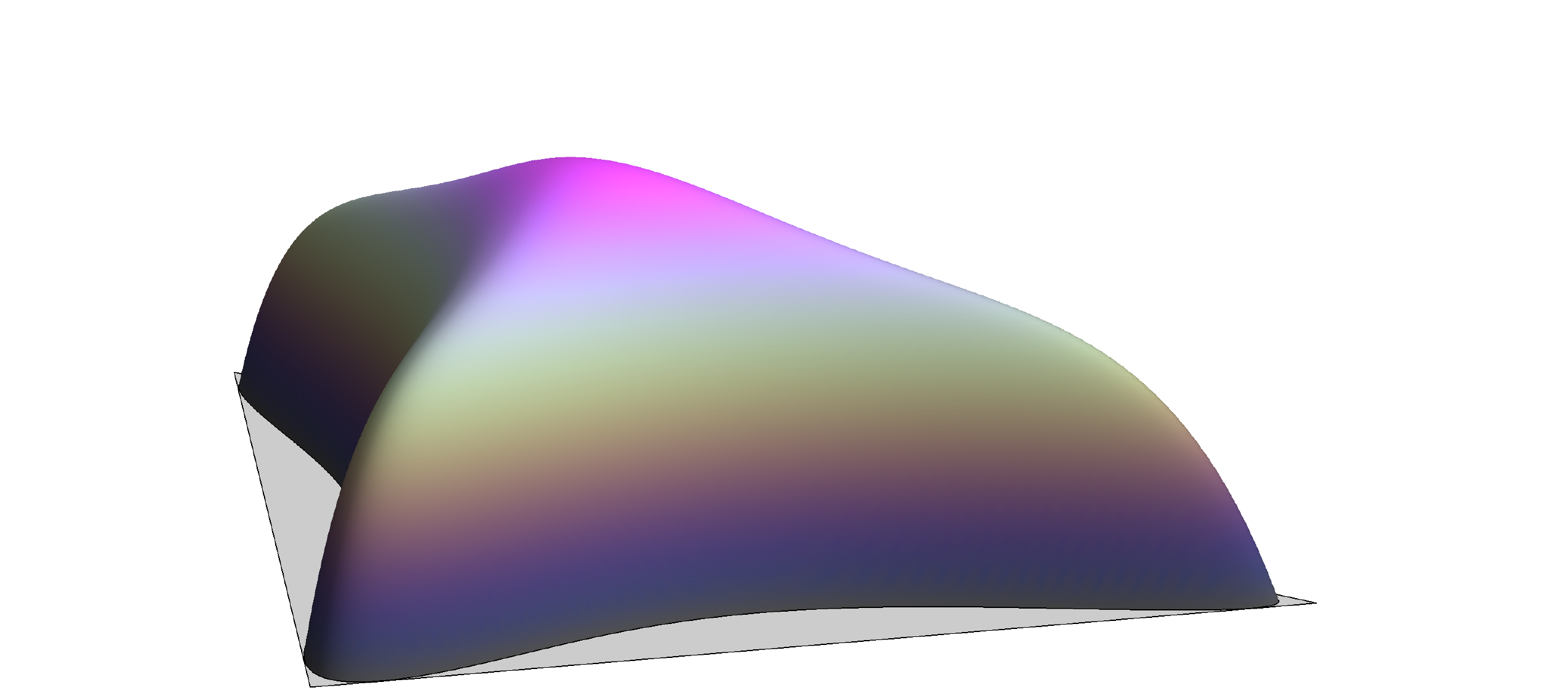}};
	    \node (fig2) at (5,-1.15) {
	\includegraphics[width=75pt]{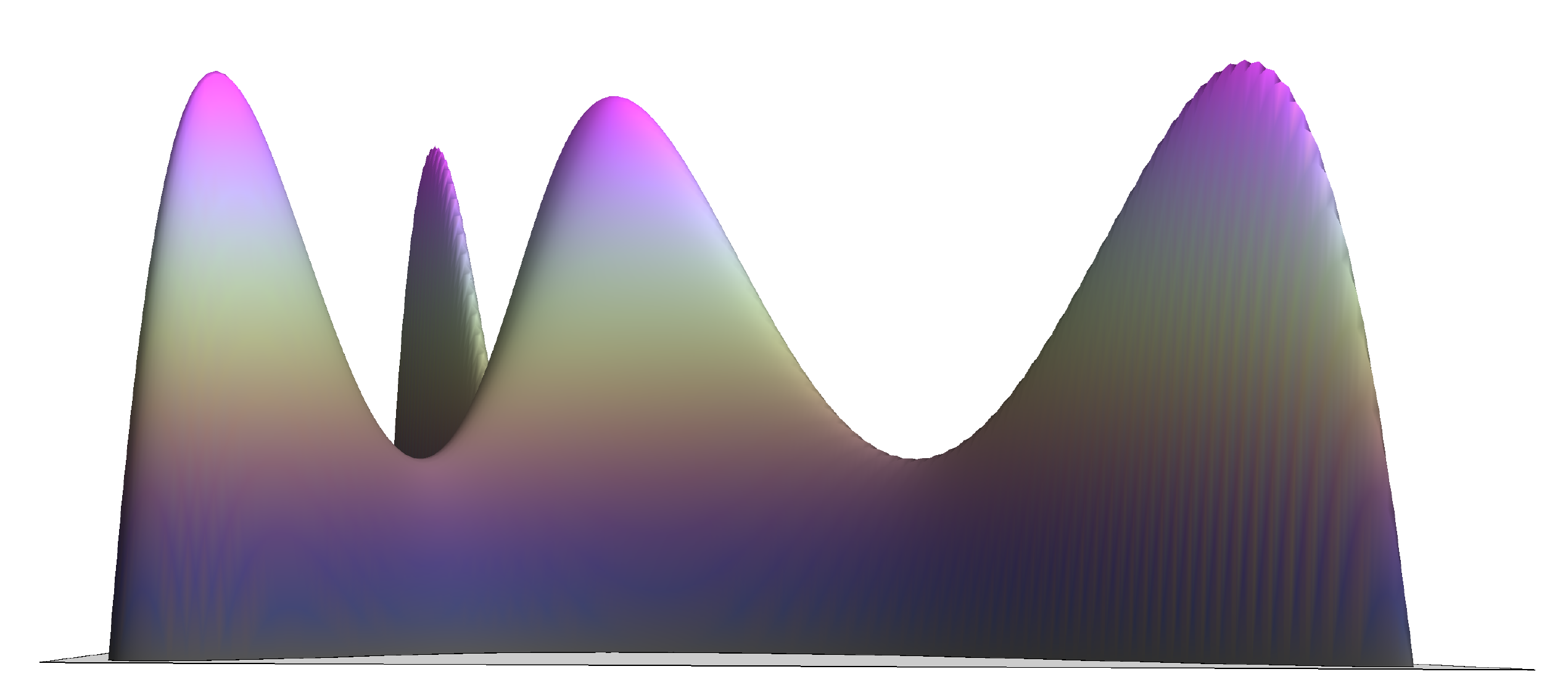}};
	    \node (fig3) at (8.1,-1.15) {
	\includegraphics[width=75pt]{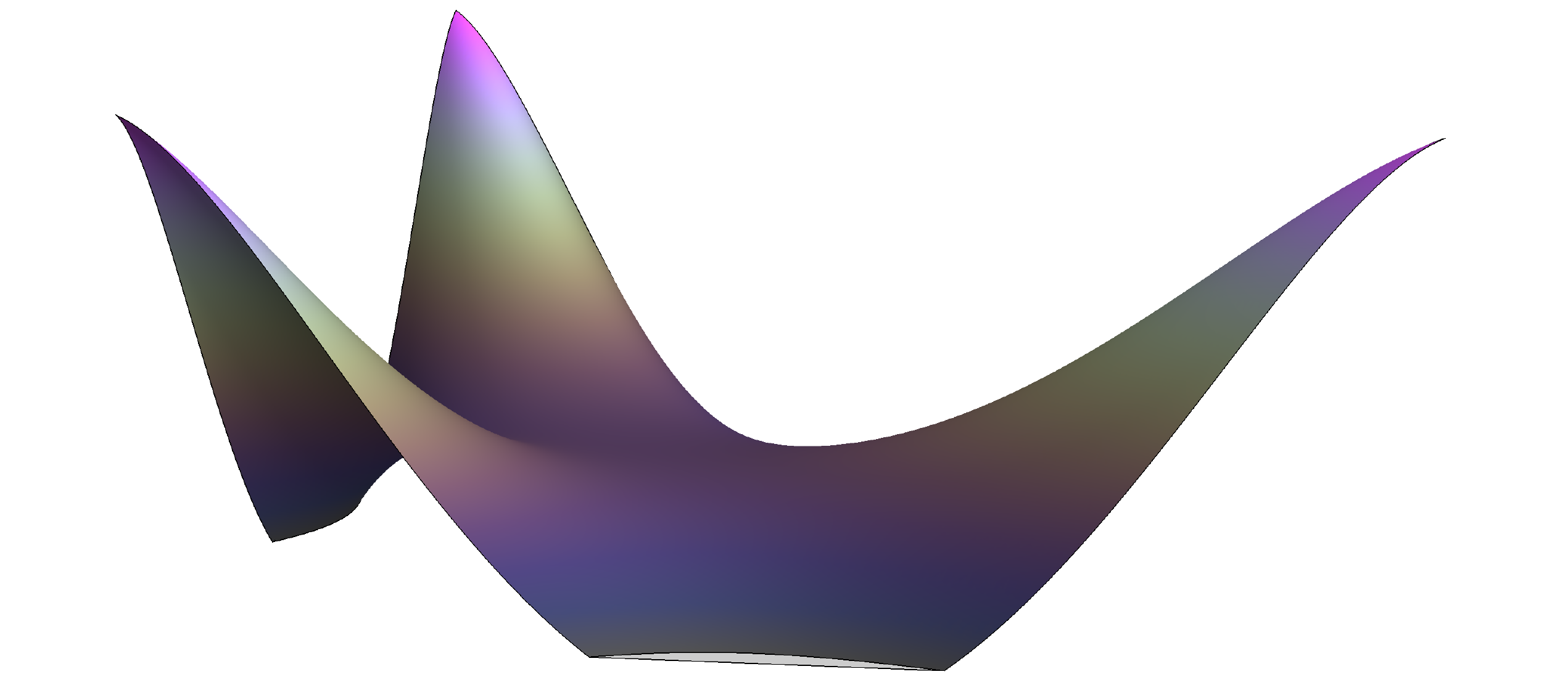}};

    \begin{scope}[shift={(plot1.south west)},x={(plot1.south east)},y={(plot1.north west)}, font=\small]

     \draw[draw=black,thick, ->] (0,0)--(30,0);
     \draw[draw=black,thick, ->] (0,0)--(0,15.5);
     
     \draw[draw=DarkBlue, ultra thick] (0,1)--(9.75,1);

     \draw[draw=DarkBlue, ultra thick] (20,2.5)--(30,2.5);
     \draw[draw=DarkBlue, ultra thick] (10.25,13.5)--(19.75,13.5);
     \draw[draw=DarkBlue,dashed] (10.25,8.5)--(19.75,8.5);

     \draw[color=DarkBlue]  (10,1) circle (0.25) node[above right] {};
     \draw[color=DarkBlue, fill=DarkBlue]  (10,4) circle (0.25) node[above right] {};
     \draw[color=DarkGreen, pattern=north west lines, pattern color=DarkGreen, thick]  (10,4) circle (0.25) node[above right] {};

     \draw[color=DarkBlue]  (10,8.5) circle (0.25) node[above right] {};
     \draw[color=DarkBlue]  (10,13.5) circle (0.25) node[above right] {};
     \draw[color=DarkGreen]  (10,13.65) circle (0.25) node[above right] {};

     \draw[color=DarkBlue]  (20,8.5) circle (0.25) node[above right] {};
     \draw[color=DarkBlue]  (20,13.5) circle (0.25) node[above right] {};
     \draw[color=DarkBlue, fill=DarkBlue]  (20,2.5) circle (0.25) node[above right] {};
     
               \draw[draw=DarkGreen, ultra thick] (10.25,13.65)--(30,13.65);
     \draw[draw=DarkGreen, ultra thick] (0,2.5)--(9.75,2.5);
     \draw[color=DarkGreen]  (10,2.5) circle (0.25) node[above right] {};
     
     \draw[color=black, thick, ->] (14.7,9.5)--(14.7,12.5);
     \draw[color=black, thick, ->] (15.3,9.5)--(15.3,12.5);

     \node[font=\small] at (10,-1.1) {$\beta_s$};
     \node[font=\small] at (15,-1.1) {$\beta_c$};
     \node[font=\small] at (20,-1.1) {$\beta_S$};
     \node[font=\tiny] at (10,0) {$|$};
     \node[font=\tiny] at (15,0) {$|$};
     \node[font=\tiny] at (20,0) {$|$};

     \node[font=\small] at (-1.4,1) {$ c_\beta$};
     \node[font=\small] at (-2.65,2.6) {$ c_\beta \log n$};
     \node[font=\small] at (-2,4.5) {$ cn^{1/3}$};
     \node[font=\small] at (-2.2,8.8) {$e^{c_\beta \sqrt n}$};
     \node[font=\small] at (-1.6,13.6) {$e^{c_\beta n}$};
     \node at (-1.5,15.5) {$\tmix$};
     
\end{scope}

\end{tikzpicture}
\vspace{-.15cm}
\begin{caption} {The mixing times of mean-field Potts Glauber (green) and Swendsen--Wang (red) dynamics when $q>2$ as $\beta$ varies; the dashed line represents the previous lower bound~\cite{GoJe99,GSV15} when $\beta\in (\beta_s,\beta_S)$}
\end{caption}
\end{figure}

As apparent from the second (coloring) stage of the Swendsen--Wang algorithm, it can seamlessly jump between the $q$ ordered low-temperature metastable states where one color is dominant. 
It was thus expected that this MCMC sampler would converge quickly to equilibrium at all temperatures; e.g., its total variation mixing time $\tmix$, formally defined in~\S\ref{sec:prelims}, would be at most polynomial in the system size for all $\beta>0$.
 
Indeed, at $q=2$ (the Ising model) Cooper, Dyer, Frieze and Rue~\cite{CDFR00} proved that, on the complete graph, Swendsen--Wang has  $\tmix = O(\sqrt{n})$ at all $\beta$ (it was later shown in~\cite{LNNP14} that  $\tmix \asymp n^{1/4}$ at $\beta_c$ while  $\tmix=O(\log n)$ at $\beta\neq\beta_c$), and Guo and Jerrum~\cite{GuJe16} recently showed that for \emph{any} $n$-vertex graph and all $\beta$, Swendsen--Wang has $\tmix = n^{O(1)}$
(this is in contrast to single-site dynamics, where $\tmix \geq \exp(cn)$ at low temperature~\cite{CDL12}).

Countering this intuition, however, Gore and Jerrum~\cite{GoJe99} found in 1999 that, for any $q\geq 3$, the Swendsen--Wang dynamics for the mean-field $q$-state Potts model has $\tmix \geq \exp(c\sqrt{n})$ for some $c(q)>0$ at its critical point $\beta_c(q)$. This is a consequence of the discontinuity of the phase transition of the mean-field Potts model for $q\geq 3$, where at $\beta_c(q)$, both the $q$ ordered phases (with one dominant color class) and the disordered phase (with all color classes having roughly $n/q$ sites) are metastable. 

On the lattice $(\mathbb Z/n\mathbb Z)^d$, the Potts model exhibits a discontinuous phase transition for some choices of $q$ (depending on $d$); there it was shown in~\cite{BCT12}, following~\cite{BCFKTVV99},
that Swendsen--Wang dynamics in fact has $\tmix \geq \exp (cn^{d-1})$ for all $q$ sufficiently large, suggesting that an exponential lower bound in $n$ should also hold in mean-field, believed to approximate high-dimensional tori. (The matching upper bound of~\cite{BCT12} applies to general graphs and translates to  $\tmix \leq \exp(c'n)$ on the complete graph.) On $\mathbb Z^2$, this lower bound was extended~\cite{GL16a} to $q$ where the phase transition is first-order (all $q>4$).

For the Glauber dynamics of the mean-field Potts model, when $q\geq 3$, the mixing time for all $\beta$ was characterized in~\cite{CDL12}, where it was shown that, in discrete-time, $\tmix$ has order $n\log n$ at $\beta<\beta_s$, order $ n^{4/3}$ at $\beta=\beta_s$, and finally $\tmix\geq \exp(c n)$ at $\beta>\beta_s$, where $\beta_s$ is the spinodal point corresponding to the onset of $q$ ordered metastable phases. Recently, Galanis, \v{S}tefankovic and Vigoda~\cite{GSV15} 
analyzed the  mixing time of the analogous mean-field Swendsen--Wang dynamics, finding it to mix in polynomial time\footnote{It was shown in that work that $\tmix = O(\log n)$ for $\beta\notin [\beta_s,\beta_S)$, whereas $\tmix\asymp n^{1/3}$ at $\beta=\beta_s$.}  both at high temperature and---unlike Glauber dynamics---at low temperatures, for all $\beta$ outside a critical window $(\beta_s,\beta_S)$ around $\beta_c$, where the critical point $\beta_S$ (mirroring the spinodal point $\beta_s$) marks the disappearance of metastability of the disordered phase. 

For $\beta\in(\beta_s,\beta_S)$, Swendsen--Wang was shown in~\cite{GSV15} to  slow down to
 $\tmix \gtrsim \exp({c\sqrt n})$ (extending the lower bound at $\beta=\beta_c$ due to Gore and Jerrum).
Analogously, for the  related Glauber dynamics for the mean-field FK model (see~\S\ref{sec:prelims} for precise definitions) with $q>2$, Blanca and Sinclair~\cite{BlSi14} proved that $\tmix\geq \exp({c\sqrt n})$
 whenever $\lambda=np$ is in the critical window $ (\lambda_s,\lambda_S)$.
 The fact that three significant papers, over a period of almost twenty years, all presented a lower bound of the form $\exp(c\sqrt{n})$, left open the possibility that this is the true order of the mixing time inside the critical window.

Our main result is  that 
the mixing time of the mean-field Swendsen--Wang dynamics  is truly exponential in $n$ at criticality, similar to the single-site Glauber dynamics.

\begin{figure}
\vspace{-0.5cm}
\centering
  \begin{tikzpicture}
    \node (fig1) at (0,0) {
	\includegraphics[width=.33\textwidth,trim={0 30pt 0 0},clip]{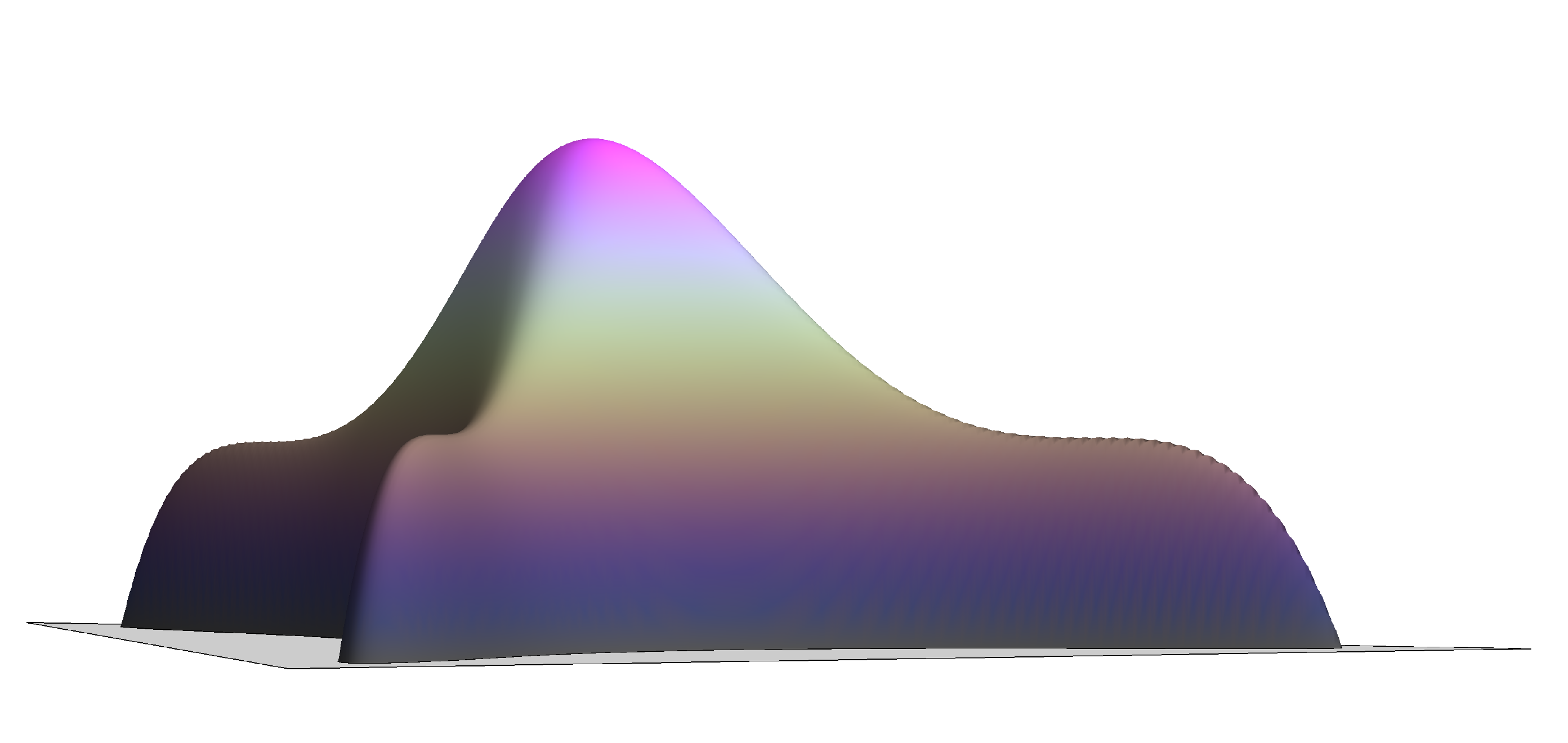}
	\includegraphics[width=.33\textwidth]{potts_mf_bc_2773_noax}
	\includegraphics[width=.33\textwidth,trim={0 10pt 0 0},clip]{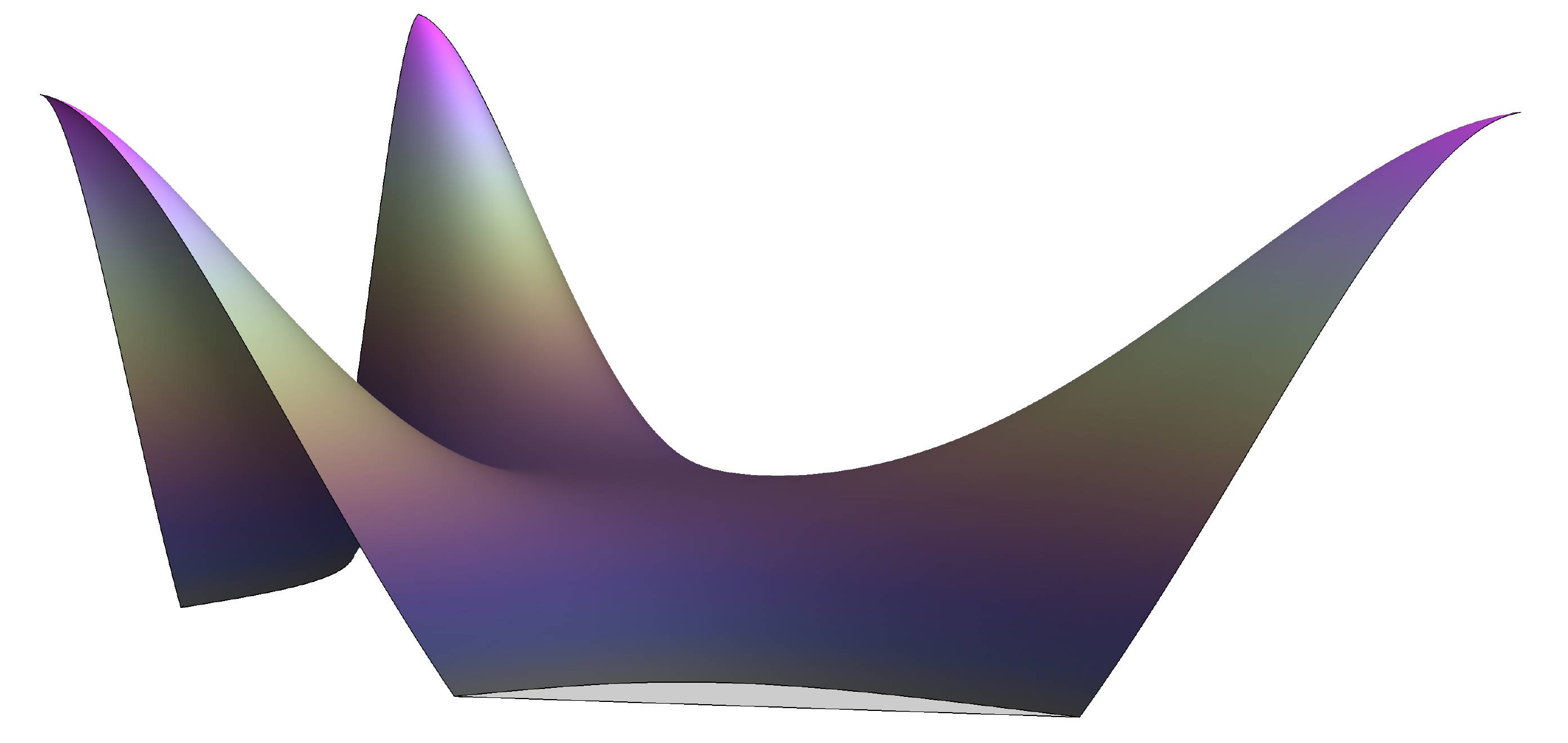}
	};
    \begin{scope}[shift={(fig1.south west)},x={(fig1.south east)},y={(fig1.north west)}]
      \node[font=\small] at (0.15,-0.1) {$\beta_s\approx 2.745$};
      \node[font=\small] at (0.5,-0.1) {$\beta_c\approx 2.773$};
      \node[font=\small] at (0.82,-0.1) {$\beta_S=3$};
      \end{scope}
    \node (fig2) at (-2.75,-2.8) {
    \includegraphics[width=.33\textwidth]{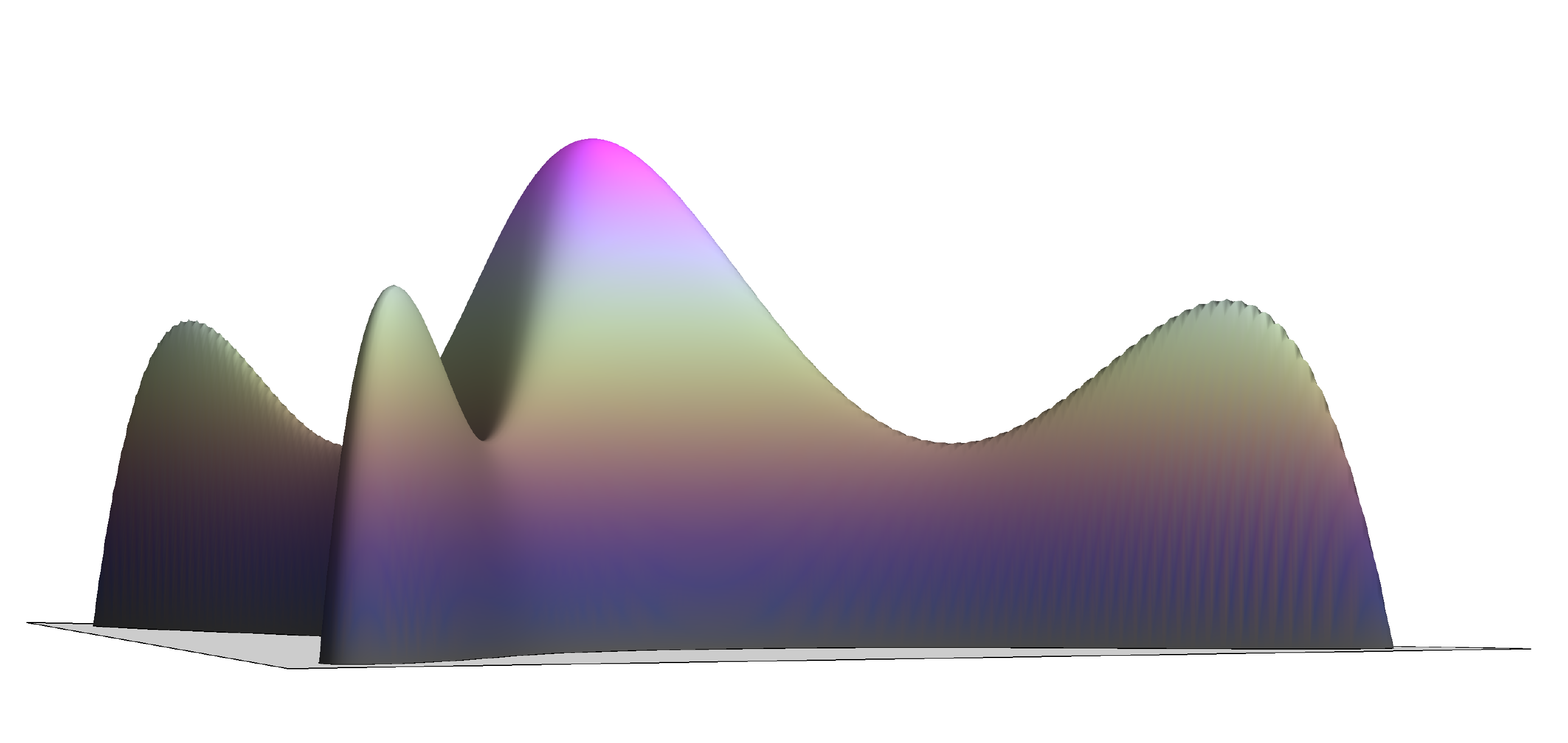}
    };
    \node (fig3) at (2.75,-2.8) {
    \includegraphics[width=.33\textwidth]{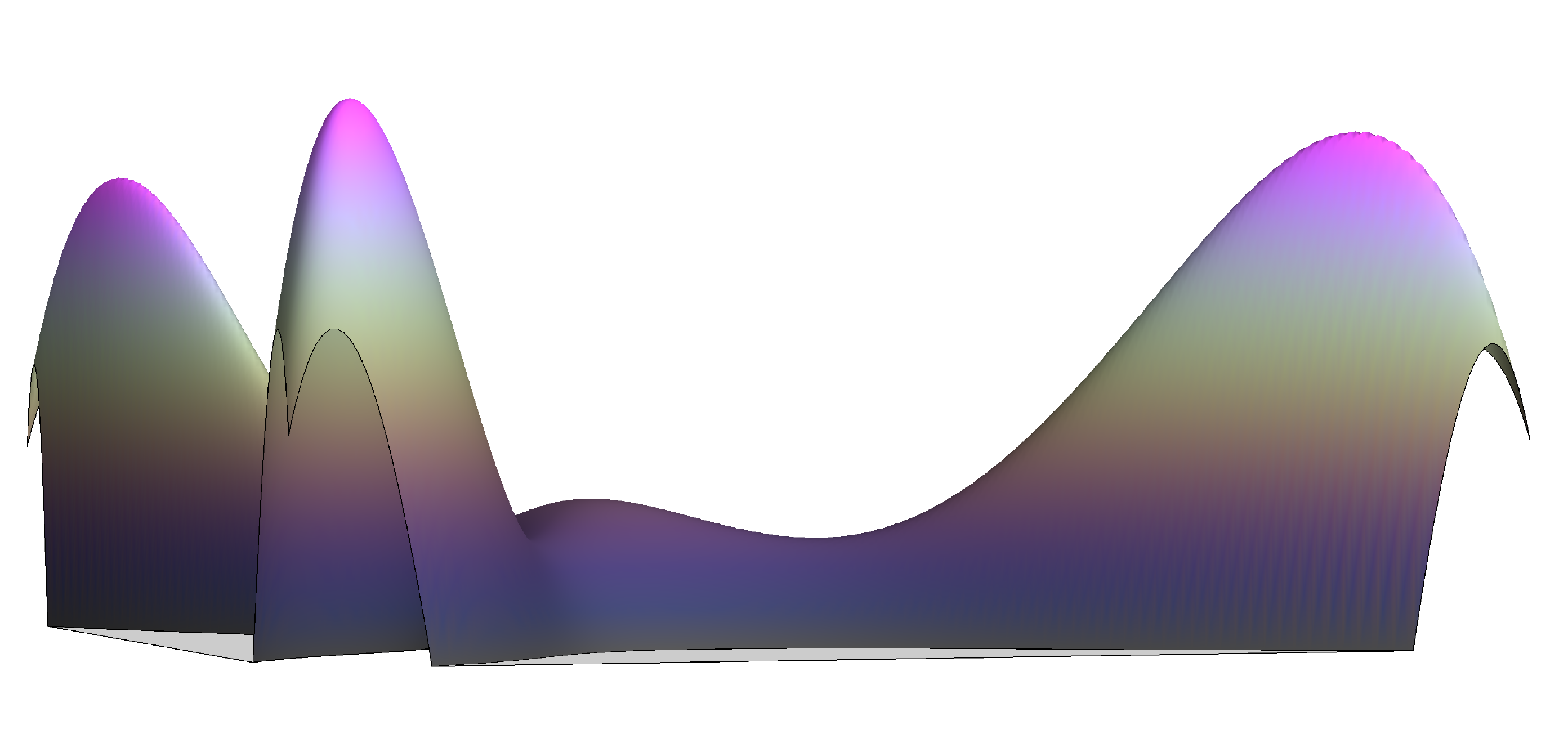}
    };
    \begin{scope}[shift={(fig2.south west)},x={(fig2.south east)},y={(fig2.north west)}]
      \node[font=\small] at (0.5,0) {$\beta_s<\beta<\beta_c$};
      \node[font=\small] at (1.55,0) {$\beta_c<\beta<\beta_S$};
      \end{scope}
  \end{tikzpicture}
\vspace{-0.9cm}
 \caption{The free energy landscape of the $3$-state Potts model in the metastability window $\beta_s\leq \beta\leq \beta_S$. The three outer peaks correspond to the ordered phases;  middle peak corresponds to the disordered phase. }
  \label{fig:potts_sw}
\vspace{-0.2cm}
\end{figure}

\begin{maintheorem}\label{mainthm:sw}
Let $q\geq 3$ be a fixed integer, and consider the Swendsen--Wang dynamics for the $q$-state mean-field Potts model on $n$ vertices at inverse temperature $\beta \in (\beta_s,\beta_S)$. There exists some $c(\beta,q)>0$ such that, for all $n$ large enough, $\tmix \geq \exp (cn)$.
\end{maintheorem}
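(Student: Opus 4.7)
The plan is a single-step Cheeger (conductance) argument on the projection of SW onto the magnetization vector $\vec m(\sigma) = (|V_1|,\dots,|V_q|)/n$, which by colour and vertex exchangeability is itself a Markov chain reversible with respect to the projected measure $\tilde\mu$. The earlier $\exp(c\sqrt n)$ lower bounds placed the bottleneck on the symmetric diagonal of the simplex, which is crossed by a single SW update with probability $e^{-\Theta(\sqrt n)}$ via Gaussian $O(n^{-1/2})$ fluctuations. The improvement here is to place the bottleneck strictly inside one of the two metastable basins, so that crossing requires a full-scale $\Omega(1)$ displacement of the max-coordinate, which a single SW step achieves only with probability $e^{-\Omega(n)}$.

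\medskip

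Conditional on $\vec m$, one SW step performs independent $\cG(|V_i|,\beta/n)$ percolation in each colour class and then reassigns an i.i.d.\ uniform colour to every cluster. For $\beta \in (\beta_s, \beta_S)$, this percolation is subcritical when $m_i\beta<1$ (all clusters $O(\log n)$) and supercritical when $m_i\beta>1$ (a giant of size $\theta(m_i\beta)m_i n$ is present, where $\theta(\lambda)$ solves $\theta=1-e^{-\lambda\theta}$, with $\theta\equiv 0$ for $\lambda\le 1$). Fix $a^{*}\in(1/q,1/\beta)$ and set
\[
  S \;=\; \bigl\{\sigma:\max_i|V_i|/n\le a^{*}\bigr\},
\]
so that every $\sigma\in S$ has every colour class strictly subcritical. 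By the metastability of the disordered phase in the window (see~\cite{CDL12,GSV15}), $\tilde\mu(S)>0$, and $\tilde\mu(S)\le \tfrac12$ whenever $\beta\ge\beta_c$. For $\beta\le\beta_c$ the symmetric choice $a^{**}\in(m_u,a_\beta)$ and $S=\{\sigma:\max_i|V_i|/n\ge a^{**}\}$ is used instead, where $m_u\in(1/\beta,a_\beta)$ is the unstable fixed point of the one-dimensional drift $F(m)=\theta(m\beta)m + (1-\theta(m\beta)m)/q$ (whose three fixed points in $[1/q,1]$ are the stable $1/q$, the stable $a_\beta$, and the intermediate unstable $m_u$).

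\medskip

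The core of the proof is the uniform one-step exponential escape bound
\[
  \sup_{\sigma\in S}\,\P_\sigma\bigl(\sigma'\notin S\bigr) \;\le\; e^{-cn}.
\]
Classical exponential-moment bounds for subcritical $\cG(n,\lambda/n)$ give that, uniformly in $\vec m\in S$, the cluster partition $\cP$ produced by the percolation step satisfies $\max_C|C|=O(\log n)$ and $\sum_C|C|^2=O(n)$ with probability $1-e^{-\Omega(n)}$. Conditional on $\cP$, the new class sizes $|V'_j|=\sum_{C\in\cP}|C|\,\one\{X_C=j\}$ are sums of i.i.d.\ bounded variables of common mean $n/q$, so Hoeffding's inequality yields
\[
  \P\bigl(|V'_j|/n \ge a^{*}\,\big|\,\cP\bigr)\;\le\;\exp\!\Bigl(-\frac{c_0(a^{*}-1/q)^2 n^2}{\sum_C|C|^2}\Bigr)\;\le\;e^{-cn},
\]
and a union bound over $j\in\{1,\dots,q\}$ preserves the rate. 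On the complementary $e^{-\Omega(n)}$-event where the cluster statistics are bad, the trivial bound $\P(\sigma'\notin S)\le 1$ suffices. With $\tilde\mu(S)\le\tfrac12$, the standard Cheeger lower bound $\tmix\ge (1-2\Phi(S))/(4\Phi(S))$ then yields $\tmix\ge e^{cn}/8$.

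\medskip

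The main technical obstacle is securing the above concentration \emph{uniformly} as the density $\lambda=m_i\beta$ ranges over $[0, a^{*}\beta]\subset[0,1-\eta]$: the rate in $n$ for the exponential tail of $\sum_C|C|^2$ must not degenerate as $\lambda\uparrow 1$, and one needs a quantitative subcritical cluster-size estimate valid uniformly over any compact sub-interval of $\{\lambda<1\}$, handled by the standard exponential-moment method for subcritical random graphs. The symmetric ordered-side choice of $S$ (used when $\beta\le\beta_c$) requires the analogous quantitative exponential concentration of the giant component in supercritical $\cG(m_1 n,\beta/n)$ (Bollob\'as, Janson--\L uczak--Ruci\'nski), together with the observation that the non-giant residual is again subcritical so the $\sum|C|^2=O(n)$ bound reapplies there, and the verification that $F$ carries the threshold $a^{**}\in(m_u,a_\beta)$ strictly into $(a^{**},1]$ by a margin bounded away from zero as $\vec m$ varies over the ordered-side set.
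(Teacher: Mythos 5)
Your overall scheme (a one-step Cheeger/bottleneck argument on a set defined by the maximum colour-class size, subcritical percolation plus Hoeffding for the recolouring stage) is exactly the skeleton of the paper's proof. However, the crucial probabilistic estimate on which everything hinges is false, and it is precisely the point where the paper has to work to go beyond the earlier $e^{c\sqrt n}$ bounds.

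You claim that, for subcritical $\cG(n,\lambda/n)$ with $\lambda$ bounded away from $1$, the events $\max_C|C|=O(\log n)$ and $\sum_C|C|^2=O(n)$ hold with probability $1-e^{-\Omega(n)}$. Neither is true. Since $\P(|\cC_x|\ge k)\gtrsim e^{-c'k}$ in the subcritical regime, one has $\P(\sL_1\ge k)\gtrsim e^{-c'k}$, so $\P(\max_C|C|\ge C\log n)\gtrsim n^{-c'C}$ (only polynomially small), and more damagingly $\P(\sum_C|C|^2\ge \alpha^2 n)\ge \P(\sL_1\ge\alpha\sqrt n)\gtrsim e^{-c'\alpha\sqrt n}$. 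The failure probability of your ``good cluster statistics'' event is therefore $e^{-\Theta(\sqrt n)}$, not $e^{-\Omega(n)}$. Plugging that into your trivial bound on the complementary event gives $\P_\sigma(\sigma'\notin S)\lesssim e^{-cn}+e^{-c\sqrt n}$, which reproduces exactly the $e^{c\sqrt n}$ lower bound of Gore--Jerrum, Galanis--\v{S}tefankovic--Vigoda, and Blanca--Sinclair but not the claimed $e^{cn}$. This is not a cosmetic slip: a single cluster of size $\sqrt n$ appears with probability $e^{-\Theta(\sqrt n)}$ and is enough to make $\sum_C|C|^2$ of order $n$, so any variant of the argument that routes the Hoeffding denominator through $\sum_C|C|^2$ (or conditions on a logarithmic cap on $\max_C|C|$) is stuck at $e^{c\sqrt n}$.

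The paper circumvents this by never controlling $\sum_C|C|^2$ or $\sL_1$ directly. Instead it introduces $S_M=\{x:|\cC_x|>M\}$, the set of vertices lying in clusters larger than a fixed threshold $M$, and proves (Lemma~\ref{lem:large-cluster-vertices}) that $\P(|S_M|\ge\rho n)\lesssim e^{-c\rho n}$ for $M\ge M_0(\lambda,\rho)$. The point is that $|S_M|$ cannot be made linear in $n$ by a single anomalous cluster of size $\sqrt n$; it requires either one cluster of size $\Omega(n)$ or $\Omega(n/M)$ clusters of size $\ge M$, both of which are exponentially unlikely in $n$ (the proof uses a union bound over collections of $K\asymp n/M$ vertices together with sub-exponential concentration of sums of their component sizes). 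Once one conditions on $|S_M|<\rho n$, the vertices in $S_M$ are simply written off as an $O(\rho n)$ error, and Hoeffding is applied only to the remaining clusters, which are bounded by the deterministic constant $M$; this gives $e^{-\delta^2 n/(2M^2)}$, a genuinely exponential-in-$n$ tail. To complete your proof along the paper's lines you would need to replace your conditioning on good cluster second moments by a conditioning on $|S_M|$, and prove the $e^{-c\rho n}$ tail for $|S_M|$ uniformly over the compact subcritical (and, for the ordered-side set, the residual-after-giant) density ranges.
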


The case of non-integer $q$ (the mean-field FK model) is more delicate: the analogue of Swendsen--Wang  in this setting is Chayes--Machta dynamics~\cite{ChMa97}, which we analyze via a recursive application of the fundamental lemma of Bollob\'as, Grimmett and Janson~\cite{BGJ}. 
As in~\cite{BlSi14}, comparison results of~\cite{Ul13} extend the result to heat-bath Glauber dynamics.

\begin{maintheorem}\label{mainthm:fk}
Fix $q>2$, and consider  Glauber dynamics for the mean-field FK model on $n$ vertices with parameters $(p=\frac \lambda n,q)$ where $\lambda\in (\lambda_s,\lambda_S)$. There exists $c(p,q)>0$ such that $\tmix \geq \exp(cn)$ for large enough $n$. The same holds for  Chayes--Machta dynamics.
\end{maintheorem}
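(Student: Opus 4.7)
The plan is to establish $\tmix\geq \exp(cn)$ for the Chayes--Machta (CM) dynamics by a conductance argument keyed to the size of the largest cluster $L_1$, and then to transfer the bound to heat-bath Glauber dynamics using the comparison inequality of~\cite{Ul13}, exactly as in~\cite{BlSi14}.

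I would first use the Bollob\'as--Grimmett--Janson (BGJ) lemma to describe $\pi=\pi_{n,\lambda,q}$ on the window $(\lambda_s,\lambda_S)$: conditioned on its cluster-size profile $(n_1,\ldots,n_k)$, a sample from $\pi$ is a family of independent $\cG(n_i,\lambda/n)$ random graphs conditioned on being connected. This reduces any event depending only on the cluster profile to a large-deviation problem for connected components of Erd\H{o}s--R\'enyi subgraphs. For $q>2$ and $\lambda\in(\lambda_s,\lambda_S)$, the result is two metastable profiles---a disordered one with $L_1=O(\log n)$, and an ordered one with a single giant cluster of size $\gamma_+ n+o(n)$ for $\gamma_+(\lambda,q)>0$ the relevant fixed point of the CM self-consistency equation---together with $\pi\{L_1\in(\epsilon n,(\gamma_+-\epsilon)n)\}\leq e^{-cn}$ for a suitable small $\epsilon>0$, while $\sA_+:=\{L_1\geq (\gamma_+-\epsilon) n\}$ and $\sA_-:=\{L_1\leq \epsilon n\}$ each carry positive $\pi$-mass.

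Next, I would bound the one-step CM escape probability from $\sA_+$. Let $C^*$ be the largest cluster of $\omega\in\sA_+$. With probability $1-1/q$, $C^*$ is not activated and is preserved intact, so $\omega'\in\sA_+$ deterministically. Conditionally on activating $C^*$ (probability $1/q$), the activated vertex set $V_T$ has size $m=|V_T|$ concentrated by Chernoff around $\gamma_+ n+(1-\gamma_+)n/q$, and within $V_T$ we resample $\cG(m,\lambda/n)$, whose mean degree $\lambda m/n$ is strictly supercritical throughout $(\lambda_s,\lambda_S)$ by the fixed-point equation defining $\gamma_+$. The sharp large-deviation bound for the size of the giant component of supercritical Erd\H{o}s--R\'enyi graphs then gives $\mathbb P(L_1(\omega')<(\gamma_+-\epsilon)n)\leq e^{-cn}$. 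This is the \emph{recursive} use of BGJ: the resampled $\omega'$ is itself an FK-like object analyzed by a second application of BGJ, and one needs Erd\H{o}s--R\'enyi large deviations at the correct density to pin down its cluster profile. A symmetric argument for $\sA_-$ exploits that $\lambda/q<1$ for $\lambda<\lambda_S$, so the resample on $\approx n/q$ vertices is subcritical and $\mathbb P(L_1(\omega')\geq(\gamma_+-\epsilon)n)\leq e^{-cn}$.

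Combining these bounds, the conductances $\Phi_\pm = Q(\sA_\pm,\sA_\pm^c)/\pi(\sA_\pm)$ are at most $e^{-cn}$, and the Cheeger inequality gives $\tmix\geq \tfrac14 e^{cn}$ for CM dynamics. The main obstacle I expect is producing sharp large-deviation bounds for $L_1(\cG(m,c/m))$ that are uniform in $m/n$ and in $\lambda\in(\lambda_s,\lambda_S)$: one must verify strict positivity of the rate function at both ``wrong'' wells, and it is precisely a careful, recursive invocation of BGJ---iterated across the resample---that upgrades the previous $\exp(c\sqrt n)$ bound to the sharp $\exp(cn)$. Secondary technicalities include concentration of $|V_T|$, controlling the small non-activated clusters (each of size $O(\log n)$), and verifying the strict inequalities $\lambda m/n>1$ and $\lambda/q<1$ across the full window.
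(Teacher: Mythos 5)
Your high-level plan (conductance for Chayes--Machta, BGJ lemma, then Ullrich comparison to Glauber) matches the paper's, but the bottleneck sets $\sA_\pm$ that you define purely in terms of the largest cluster size $\sL_1$ cannot yield the claimed $\exp(-cn)$ bound, and this is exactly where the paper's new idea enters. The problem is in the step you dismissed as a ``secondary technicality'': concentration of the activated vertex set $|V_T|$. That concentration comes from a Hoeffding bound applied to the i.i.d.\ cluster activations, and the rate is governed by $\sum_i\sL_i^2$. On $\sA_-=\{\sL_1\leq \epsilon n\}$ one only has $\sum_i\sL_i^2\leq \sL_1\cdot n\leq \epsilon n^2$, so Hoeffding gives merely $\exp(-\delta^2/(2\epsilon))$, a constant, and thus no useful bottleneck bound. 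Even conditioning to typical disordered profiles with $\sL_1=O(\log n)$ gives $\sum_i\sL_i^2=O(n\log n)$ and hence $\exp(-cn/\log n)$, still short of $\exp(-cn)$. Forcing all clusters below $M$ for fixed $M$ would give the right rate but would require conditioning on an event of probability only $\exp(-cn/M)$ in equilibrium, so one gains nothing; pushing $M=\sqrt n$ recovers exactly the $\exp(-c\sqrt n)$ bound of Gore--Jerrum, GSV, and Blanca--Sinclair, but not the theorem.

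The paper's key move, which is absent from your proposal, is to define the bottleneck set not via $\sL_1$ but via $S_M(\omega)=\{x:|\cC_x|>M\}$, the set of vertices in clusters of size exceeding a fixed constant $M$: the set is $A_{\rho,M}=\{|S_M|<\rho n\}$ in the regime $\lambda\in[\lambda_c,\lambda_S)$, and $A_{\rho,\epsilon,M}=\{\sL_1\geq(\Theta^*+\epsilon)n,\,|S_M-\cC_1|<\rho n\}$ when $\lambda\in(\lambda_s,\lambda_c]$. On these sets one can discard the at most $\rho n$ ``bad'' vertices (absorbing them into the error) and apply Hoeffding to the remaining clusters, all of size at most the \emph{constant} $M$, getting rate $\exp(-\delta^2n/(2M^2))=\exp(-cn)$ (Lemma~\ref{lem:hoeffding}). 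The matching equilibrium input is then Lemma~\ref{lem:large-cluster-vertices}: for subcritical $\cG(n,p)$, $\pi(|S_M|\geq\rho n)\lesssim e^{-c\rho n}$ for fixed $M$, which combined with BGJ yields the boundary-layer estimate $\pi(\rho n/2<|S_M|<\rho n\mid A_{\rho,M})\lesssim e^{-cn}$ (Lemmas~\ref{lem:bottleneck-estimate} and~\ref{lem:bottleneck-estimate-2}). Without replacing the $\sL_1$-based bottleneck by one built on $S_M$, the argument you outline stalls at the same $\exp(c\sqrt n)$ barrier as the prior work. A smaller point: you parametrize the ordered set by $\gamma_+=\Theta_r$, but the paper uses the lower root $\Theta^*$ of the drift $g=f-\theta$ as the threshold, since positivity of $g$ on $(\Theta^*,\Theta_r)$ is what makes the escape probability small for \emph{every} $\omega$ in the set, not just those near the fixed point.
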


To outline our approach for proving Theorems~\ref{mainthm:sw}--\ref{mainthm:fk}, we first sketch the argument of~\cite{GoJe99},  thereafter adapted to $\beta\in[\beta_s,\beta_S)$ in~\cite{GSV15} and to the FK model in~\cite{BlSi14}. Starting from a Potts configuration where each color class has $\frac nq \pm\epsilon n$ vertices, since $\beta<\beta_S$, for small enough $\epsilon$, this corresponds to a subcritical Erd\H{o}s-R\'enyi random graph $\cG(n,p)$ in the first stage of the Swendsen--Wang dynamics. The exponential tail of component sizes in this regime shows that, for a sequence $k=k(n)$, with probability at least $1-n\exp(-c k)$, no cluster in the edge configuration we obtain is larger than $k$; on this event, the component sizes $\sL_i$ satisfy $\sum_i \sL_i^2 \leq k \sum \sL_i = n k$, thus 
by Hoeffding's inequality, with probability $1-O(\exp[-\epsilon^2 n/ (2k)])$, every new color class will have $n/q \pm\epsilon n$ vertices, and in particular no dominant color class would emerge.
In this argument, choosing $k \asymp \sqrt n$ balances the two probability estimates to $1-\exp(-c\sqrt{n})$. However, at $\beta\geq\beta_c$, the Potts model does admit a dominant color class with positive (uniformly bounded away from 0) probability, thus the mixing time is at least $ \exp(c\sqrt n)$. 

In order to  improve this lower bound into $\exp( c n)$ per Theorem~\ref{mainthm:sw}, instead of looking at the size of the largest component after the $\cG(n,p)$ stage of the dynamics, we consider $S_M$, the set of vertices in connected components of size larger than $M$. We show that, whenever the $\cG(n,p)$ stage is subcritical and $M$ is sufficiently large, the probability that $|S_M| > \rho n$ is at most $\exp(-c\rho n)$. Moreover, given $|S_M|\leq \rho n$, Hoeffding's inequality implies that, following the second stage of the dynamics, all the new color classes will have $n/q \pm \epsilon n$ vertices except with probability $\exp[-2 (\frac{\epsilon-\rho}M)^2 n]$, yielding $\tmix \geq \exp(cn)$.  The proof of Theorem~\ref{mainthm:fk} follows a similar path, yet involves additional equilibrium estimates on the conditional probabilities under $\pi_{n,\lambda,q}$, as the Chayes--Machta dynamics resamples a strict subset of the configuration in each step.

\section{Preliminaries}\label{sec:prelims}

Throughout this paper, we use the notation $ f \lesssim g$ for two sequences $f(n),g(n)$ to denote $f= O(g)$, and let $f \asymp g$ denote $f \lesssim g \lesssim f$.
We re-parametrize the FK and Potts models by $\lambda$ instead of $p$ and $\beta$ via the relations $p=\lambda/n$ and $\lambda/n=1-e^{-\beta/n}$, to allow us to treat the FK and Potts models in a unified manner. We will consider these models  on the complete graph on $n$ vertices, $G=(V,E)= (\{1,...,n\},\{ij\}_{1\leq i< j\leq n})$.

Denote by $\mu_{n,\lambda,q}$, the Potts measure (with $\beta$ such that $\lambda/n=1-e^{-\beta/n}$) and by $\pi_{n,\lambda,q}$ the corresponding FK measure with $p=\lambda/ n$ on the complete graph on $n$ vertices. The FK model with $q=1$ corresponds precisely to the Erd\H{o}s--R\'enyi random graph $\cG(n,p)$ and we use the shortened notation $\pi_{n,\lambda}= \pi_{n,\lambda,1}$. We occasionally use $\cG(n,p,q)$ to denote the mean-field FK model given by $\pi_{n,\lambda,q}$.

For any FK configuration $\omega\in\{0,1\}^E$, enumerate the clusters of $\omega$ in decreasing size $\cC_1,\cC_2,...$ and let $\sL_i=|\cC_i|$. For a vertex $x$ let, $\cC_{x}$ denote the cluster to which $x$ belongs.

For all $q\leq  2$ define the critical points $\lambda_s=\lambda_c=\lambda_S=q$ and for $q>2$, define 
\[\lambda_s=\min_{z\geq 0} \left\{z+\frac{qz}{e^z-1}\right\}\,, \qquad \lambda_c= \frac{2(q-1)\log (q-1)}{q-2}\,, \qquad \lambda_S = q\,,
\]
so that for $q>2$, we have $\lambda_s<\lambda_c<\lambda_S$ (see e.g.,~\cite{GSV15,BlSi14}). The critical points $\lambda_s,\lambda_S$ correspond to the parameters of emergence and disappearance of metastability, where at $\lambda=\lambda_c$, the ordered and disordered metastable states have the same free energy. These two critical points can also have the following alternative interpreation~\cite{GSV15}: $\lambda_s$ corresponds to the first uniqueness/non-uniqueness threshold of the $\Delta$-regular infinite tree, and $\lambda_S$ should correspond to a second uniqueness/non-uniqueness threshold of the $\Delta$-regular tree with periodic boundary conditions.

\subsubsection*{The FK and Potts phase transitions} The following give a description of the static phase transition undergone by the mean-field FK and Potts models respectively. 
Let $\Theta_r=\Theta_r(\lambda,q)$ be the largest solution of $e^{-\lambda x}= 1-\frac{qx}{1+(q-1)x}$ so $\Theta_r=\frac {q-2}{q-1}$ when $\lambda=\lambda_c$.
\begin{proposition}[{\cite[Thms.~2.1--2.2]{BGJ},\cite[Thm. 19]{LL06}}]\label{prop:fk-phase-transition}
Consider the $n$-vertex mean-field FK model with parameters $(p,q)$ with $p=\lambda/n$; if $\lambda<\lambda_c(q)$, for every $\epsilon>0$, we have $\lim_{n\to\infty} \pi_{n,\lambda,q}(\sL_1\leq \epsilon n) =1$ whereas if $\lambda>\lambda_c(q)$, for every $\epsilon>0$, we have $\lim_{n\to\infty} \pi_{n,\lambda,q}(\sL_1 \geq (\Theta_r-\epsilon)n) =1$. If $\lambda=\lambda_c(q)$, there exists $\gamma(q)\in (0,1)$ so that for all $\epsilon>0$, $\lim_{n\to\infty}\pi_{n,\lambda,q}(\sL_1 \leq \epsilon n) \geq \gamma$ and $\lim_{n\to\infty}\pi_{n,\lambda,q}(\sL_1\geq (\Theta_r-\epsilon)n)\geq 1-\gamma$.
\end{proposition}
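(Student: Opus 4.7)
The plan is to reduce this classical statement to a large-deviation analysis of the Potts color-class profile combined with the standard Erd\H{o}s--R\'enyi dichotomy, invoking the Edwards--Sokal coupling described in the introduction. Specifically, to sample $\omega\sim\pi_{n,\lambda,q}$ I first draw $\sigma\sim\mu_{n,\lambda,q}$ with color classes $V_1,\ldots,V_q$, then lay down independent $G_i\sim\cG(|V_i|,\lambda/n)$ on each $V_i$, and set $\omega=\cup_i G_i$. Since the clusters of $\omega$ coincide with those of the $G_i$, we have $\sL_1(\omega)=\max_i \sL_1(G_i)$, so the proposition reduces to controlling (i) the joint law of the profile $\rho_n = (|V_1|/n,\ldots,|V_q|/n)$ under $\mu_{n,\lambda,q}$, and (ii) the giant-component behavior of each $\cG(|V_i|,\lambda/n)$.

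For (i), a direct count of Potts configurations with prescribed profile (modulo a $1+o(1)$ factor from the $\beta\leftrightarrow\lambda$ reparametrization) gives
\begin{equation*}
\mu_{n,\lambda,q}(|V_i|=n_i,\ \forall i) \;\propto\; \binom{n}{n_1,\ldots,n_q}\exp\Bigl(\tfrac{\lambda}{2n}\sum_{i=1}^q n_i^2\Bigr),
\end{equation*}
so by Stirling the profile $\rho_n$ satisfies a large-deviation principle on the simplex with good rate function $F(\rho)=\sum_i \rho_i\log\rho_i - \tfrac{\lambda}{2}\sum_i \rho_i^2$. Using the $S_q$-symmetry of $F$ together with convexity transverse to any coordinate axis, every critical point of $F$ is, up to relabeling, of the form $(\alpha,\tfrac{1-\alpha}{q-1},\ldots,\tfrac{1-\alpha}{q-1})$; the resulting one-dimensional fixed-point equation for $\alpha$ admits the disordered root $\alpha=1/q$ for all $\lambda$, and an ordered root $\alpha=a_\lambda>1/q$ which emerges at $\lambda=\lambda_s$, ties the disordered one in $F$-value at $\lambda=\lambda_c$, and absorbs the symmetric critical point at $\lambda=\lambda_S$. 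Comparing values, the unique global minimizer (modulo $S_q$) is $1/q$ for $\lambda<\lambda_c$, is $a_\lambda$ for $\lambda>\lambda_c$, and both coexist at $\lambda_c$; the LDP then concentrates $\rho_n$ on neighborhoods of these minimizers.

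Part (ii) is standard: $\cG(m,c/m)$ has $\sL_1=O(\log m)$ w.h.p.\ when $c<1$, and $\sL_1=(\zeta_c+o_{\P}(1))m$ when $c>1$, where $1-\zeta_c = e^{-c\zeta_c}$. In the disordered regime each $V_i\approx n/q$ gives effective parameter $\lambda/q<1$ (using $\lambda<\lambda_S=q$), hence $\sL_1(\omega)\leq\epsilon n$ w.h.p. In the ordered regime the dominant class has effective parameter $\lambda a_\lambda>1$ and yields a giant of size $a_\lambda\zeta_{\lambda a_\lambda}n$; combining the defining equations for $a_\lambda$ and $\zeta_c$ algebraically identifies $a_\lambda\zeta_{\lambda a_\lambda}=\Theta_r$, while the remaining $q-1$ classes stay subcritical and contribute $o(n)$. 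The main obstacle is the $\lambda=\lambda_c$ case, where the two minimizer wells have equal depth and the split ratio is determined by subexponential corrections: a Laplace expansion of the LDP density around each minimizer, whose Hessian-determinant prefactors are both strictly positive and finite, shows that the $O(1)$ masses of the two wells combine into some $\gamma\in(0,1)$, establishing the third bullet of the proposition.
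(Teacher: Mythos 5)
The paper does not prove this proposition; it cites it from \cite{BGJ} and \cite{LL06}, which argue directly on the FK partition function and component-size generating functions for all real $q\geq 1$. Your proposed derivation via the Edwards--Sokal coupling plus a Potts large-deviation principle is a perfectly sensible and essentially correct route \emph{for integer $q$}: the rate function $F(\rho)=\sum_i\rho_i\log\rho_i-\tfrac\lambda2\sum_i\rho_i^2$ is right, its $S_q$-symmetric minimizer structure is standard, and the identity $a_\lambda\theta_{\lambda a_\lambda}=\Theta_r$ does check out algebraically (setting $x=\Theta_r$ and $a=\tfrac{1+(q-1)x}{q}$, the $\Theta_r$-equation becomes $e^{-\lambda x}=\tfrac{1-x}{1+(q-1)x}$, which is exactly $\log\tfrac{a}{b}=\lambda(a-b)$ with $b=\tfrac{1-a}{q-1}$, and then $\theta_{\lambda a}=x/a$ follows from the Erd\H{o}s--R\'enyi fixed-point equation). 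The Laplace-expansion step at $\lambda=\lambda_c$ is stated somewhat loosely, but the idea is right since both Hessians are positive definite there ($\lambda_s<\lambda_c<\lambda_S$).

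The genuine gap is that your argument fundamentally cannot cover non-integer $q$, and that case is precisely what the paper needs. The Edwards--Sokal coupling requires a $q$-state Potts spin system and simply does not exist when $q\notin\N$; yet Proposition~\ref{prop:fk-phase-transition} is stated for the FK model with general real $q$, and it is invoked in the proofs for Theorem~\ref{mainthm:fk} with $q>2$ non-integer (e.g.\ to lower-bound $\pi_{n,\lambda,q}(A_{\rho,M}^c)$ and $\pi_{n,\lambda,q}(A_{\rho,\epsilon,M}^c)$). To handle real $q$ you would have to replace the Potts-coupling reduction either by the direct FK analysis of \cite{BGJ,LL06}, or by a reduction through the fundamental coloring lemma (\cite[Lemma 3.1]{BGJ}, quoted in the paper as Lemma~\ref{lem:fundamental-lemma}/Corollary~\ref{cor:fundamental-lemma}) that splits $\cG(n,p,q)$ into $\lfloor q\rfloor$ independent $\cG(\cdot,p)$ pieces plus a $\cG(\cdot,p,q-\lfloor q\rfloor)$ remainder; the latter is exactly the device the paper uses elsewhere (Lemmas~\ref{lem:remainder-vertices}--\ref{lem:supercritical-remainder-vertices}) to transfer subcritical/supercritical estimates from $q=1$ to $q<1$. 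As written, your proof establishes the Potts corollary (Corollary~\ref{cor:Potts-phase-transition}) rather than the FK proposition.
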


\begin{corollary}\label{cor:Potts-phase-transition}
Consider the mean-field Potts model parametrized by $\lambda=n(1-e^{-\beta/n})$ and $q$. If $\lambda<\lambda_c(q)$, for any $\epsilon>0$, 
\[\lim_{n\to\infty} \mu_{n,\lambda,q} \biggl(\sigma:\max_{r=1,..,q} \Bigl|\tfrac1 n \sum_{i\leq n} \boldsymbol 1\{\sigma_i=r\}-\tfrac 1q\Bigr|<\epsilon \biggr) =1\,,
\]
and if $\lambda>\lambda_c(q)$, then there exists $a(\lambda,q)>q^{-1}$ such that for sufficiently small $\epsilon>0$, 
\[\lim_{n\to\infty} \mu_{n,\lambda,q} \biggl(\sigma:\max_{r=2,...,q} \Bigl\{\Bigl|\tfrac1n \sum _{i\leq n} \boldsymbol 1\{\sigma_i=1\}-a \Bigr|,\Bigl|\tfrac1n \sum_{i\leq n} \boldsymbol1\{\sigma_i=r\}-\tfrac {1-a}{q-1}  \Bigr|\Bigr\}<\epsilon \biggr) =\frac 1q\,.
\]
If $q>2$ and $\lambda=\lambda_c(q)$, there exists $\gamma(q)\in(0,1)$ so that for all sufficiently small $\epsilon>0$,
\begin{align*}
\lim_{n\to\infty} \mu_{n,\lambda,q} & \biggl(\max_{r= 1,..,q} \Bigl|\tfrac1n \sum_{i\leq n} \boldsymbol 1\{\sigma_i=r\}-\tfrac 1q\Bigr|<\epsilon \biggr) \geq \gamma\,, \qquad \mbox{and} \\
\lim_{n\to\infty} \mu_{n,\lambda,q} & \biggl(\max_{r=2,...,q} \Bigl\{\Bigl| \tfrac1n \sum _{i\leq n} \boldsymbol 1\{\sigma_i=1\}-a \Bigr|,\Bigl|\tfrac1n \sum_{i\leq n} \boldsymbol1\{\sigma_i=r\}-\tfrac{1-a}{q-1}\Bigr|\Bigr\}<\epsilon \biggr) \geq \frac{1-\gamma}q\,.
\end{align*}
\end{corollary}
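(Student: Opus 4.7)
The plan is to derive Corollary~\ref{cor:Potts-phase-transition} from Proposition~\ref{prop:fk-phase-transition} via the Edwards--Sokal coupling: sample $\omega \sim \pi_{n,\lambda,q}$, enumerate its clusters $\cC_1, \cC_2, \ldots$ in decreasing order of size with $\sL_i = |\cC_i|$, and assign i.i.d.\ uniform colors $X_1, X_2, \ldots \in \{1,\ldots,q\}$; the resulting configuration $\sigma$ is distributed as $\mu_{n,\lambda,q}$, and the size of color class $r$ is
\[
N_r(\sigma) = \sum_i \sL_i \one\{X_i = r\}.
\]
Conditional on $\omega$, the summands are independent with $|\sL_i \one\{X_i=r\} - \sL_i/q| \le \sL_i$, so Hoeffding's inequality yields
\[
\P\bigl(|N_r - n/q| \ge \delta n \bigm| \omega\bigr) \le 2 \exp\!\bigl(-2\delta^2 n^2 / \textstyle\sum_i \sL_i^2\bigr),
\]
reducing the entire corollary to tail control on $\sum_i \sL_i^2$ under $\pi_{n,\lambda,q}$.

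In the subcritical regime $\lambda<\lambda_c$, I would invoke Proposition~\ref{prop:fk-phase-transition} to get $\sL_1 \le \epsilon' n$ w.h.p.\ for any $\epsilon' > 0$, giving $\sum_i \sL_i^2 \le \sL_1 \cdot n \le \epsilon' n^2$; taking $\epsilon'$ small relative to $\epsilon^2$ and unioning over the $q$ colors yields $\max_r|N_r - n/q| < \epsilon n$ with probability $1 - o(1)$. The supercritical regime $\lambda > \lambda_c$ needs more: alongside $\sL_1 \ge (\Theta_r - \epsilon) n$ from Proposition~\ref{prop:fk-phase-transition} I will invoke the companion bound $\sL_2 = o(n)$ (which, though not stated above, is established in~\cite{BGJ,LL06} on the same footing as the giant-cluster bound). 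On this event, by symmetry the giant cluster $\cC_1$ takes each label with probability exactly $1/q$; conditioning on $X_1 = 1$ and writing $N_1 = \sL_1 + \sum_{i \ge 2}\sL_i \one\{X_i = 1\}$ and $N_r = \sum_{i \ge 2} \sL_i \one\{X_i = r\}$ for $r \ne 1$, the residual sums have mean $(n-\sL_1)/q$ and, since $\sum_{i\ge 2}\sL_i^2 \le \sL_2(n-\sL_1) = o(n^2)$, concentrate by Hoeffding. Using $\sL_1/n \to \Theta_r$ in probability, the limiting fractions are $a = [1+(q-1)\Theta_r]/q > 1/q$ for color $1$ and $(1-a)/(q-1) = (1-\Theta_r)/q$ for the others, as required.

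The critical case $\lambda = \lambda_c$ (with $q > 2$) will combine the two analyses: Proposition~\ref{prop:fk-phase-transition} splits mass asymptotically into $\{\sL_1 \le \epsilon n\}$ (with probability $\ge \gamma - o(1)$) and $\{\sL_1 \ge (\Theta_r - \epsilon)n\}$ (with probability $\ge 1 - \gamma - o(1)$), and I apply the subcritical argument on the first and the supercritical argument on the second, picking up the $1/q$ symmetry factor on the latter and recovering the stated lower bounds $\gamma$ and $(1-\gamma)/q$. The main obstacle, and the only ingredient not contained verbatim in Proposition~\ref{prop:fk-phase-transition}, is the sublinearity of $\sL_2$ in the supercritical and critical-giant regimes; this is the key extra input needed to run Hoeffding on the residual clusters once the linear-size cluster has been set aside, and everything else is routine bookkeeping around independent uniform coloring.
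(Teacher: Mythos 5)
The paper itself offers no proof of this corollary; it is stated directly after Proposition~\ref{prop:fk-phase-transition} as a consequence via the Edwards--Sokal coupling, and your derivation follows exactly the intended route. The reduction via Hoeffding with parameter $\sum_i \sL_i^2$, the bound $\sum_i \sL_i^2 \le \sL_1 \cdot n$, and the computation $a = (1+(q-1)\Theta_r)/q > 1/q$, $(1-a)/(q-1) = (1-\Theta_r)/q$ are all correct. You correctly flag that beyond the one-sided estimate $\sL_1 \ge (\Theta_r-\epsilon)n$ stated in Proposition~\ref{prop:fk-phase-transition}, the (super)critical cases also need the two-sided concentration $\sL_1/n \to \Theta_r$ (in probability, on the giant-cluster event) together with $\sL_2 = o(n)$ in order to run Hoeffding on the residual clusters and pin down $a$; both facts are part of the Bollob\'as--Grimmett--Janson / Luczak--Luczak analysis that the proposition summarizes, so this is a matter of quoting a slightly fuller version of the cited results rather than a substantive gap.
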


\subsubsection*{Cluster dynamics}
 Swendsen--Wang dynamics for the $q$-state Potts model on $G=(V,E)$ with parameter $\beta$ such that $p=1-e^{-\beta/n}$ is the following discrete-time reversible Markov chain. From a Potts configuration $\sigma$ on $G$, generate a new state $\sigma'$ as follows.
\begin{enumerate}
\item Introduce auxiliary edge variables and for $e=xy\in E$ set $\omega(e)=0$ if $\sigma_x\neq \sigma_y$ on each of the $q$ sets of vertices of $\sigma$ of the same color, $V_1,..,V_q$, independently sample $\omega\restriction_{\{xy:x,y\in V_i\}} \sim \cG(|V_i|,p)$.
\item For every connected component of the resulting $\omega$,
reassign the cluster, collectively, an i.i.d.\ color in $1,...,q$, to obtain the new configuration $\sigma'$.
\end{enumerate}
 Chayes--Machta dynamics for the FK model on $G=(V,E)$ with parameters $(p,q)$, for $q \geq 1$ and $p=\lambda/n$,
is the following discrete-time reversible Markov chain:
 From an FK configuration $\omega\in\Omega_{\rc}$ on $G$,  generate a new state $\omega'\in\Omega_{\rc}$ as follows.
\begin{enumerate}
\item Assign each cluster $C$ of $\omega$ an auxiliary i.i.d.\ variable $X_C\sim\mathrm{Bernoulli}(1/q)$.
\item Resample every $e=xy$ such that $x$ and $y$ belong to \emph{active} clusters ($X_c=1$)
via i.i.d.\ random variables $X_e\sim\mathrm{Bernoulli}(\lambda/n)$, yielding a new configuration $\omega'$.
 \end{enumerate}

Variants of Chayes--Machta dynamics with $1\le k\le \lfloor q\rfloor$ ``active colors" have also been studied, with numerical evidence for  $k=\lfloor q \rfloor$ being the most efficient choice; see~\cite{GOPS11}.

\subsubsection*{Glauber dynamics for the FK model} Swendsen--Wang dynamics is closely related to the FK model; much of the analysis of Swendsen--Wang dynamics on general graphs has been via the Glauber dynamics for the corresponding FK model. Discrete-time Glauber dynamics~\cite{Gl63} for the FK model on $G=(V,E)$  with $p=\lambda/n$ is  as follows: select an edge  $e=xy$ in $E$ uniformly at random and update $\omega(e)$ according to $\pi_{n,\lambda,q}(\cdot\restriction_{\{e\}} \mid \omega\restriction_{G-\{e\}})$. 

\subsubsection*{Size of largest component and drift functions}  

For $\lambda>1$, let $\theta_\lambda$ be the unique positive root of $e^{-\lambda x}=1-x$.
Recall the following tail estimates for $\sL_1$ in $\cG(n,p)$.

\begin{fact}[e.g., cf.~{\cite[p.~109]{JLR}}]\label{fact:cluster-exp-decay}
Consider $\cG(n,p)$ with $pn=\lambda<1$. Then for any  $x$,
\[\pi_{n,\lambda,1}(|\cC_{x}|\geq k ) \leq e^{-\frac {(1-\lambda)^2k}2}\,.
\]
In particular, $\pi_{n,\lambda,1}(\sL_1\geq k) \leq n \exp\big(-(1-\lambda)^2 k/2\big)$.
\end{fact}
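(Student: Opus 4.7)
The plan is to dominate the cluster size $|\cC_x|$ via a breadth-first exploration and then apply a standard Chernoff (exponential moment) bound on the resulting sum of independent binomials, followed by a union bound for the statement on $\sL_1$.

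First, I would run a BFS started from $x$: maintain a queue initialized with $\{x\}$, and at each step pop a vertex and reveal all its edges to the as-yet-unreached vertices, enqueuing any new neighbors. By the principle of deferred decisions, if $Z_t$ denotes the number of newly discovered vertices when the $t$-th vertex is processed, then $Z_t$ is stochastically dominated by an independent $\Bin(n-1,p)$ variable (at most $n-1$ outside vertices are ever available, and the edges to them are exposed only at this step). The event $\{|\cC_x|\ge k\}$ forces the queue to remain non-empty for at least $k-1$ pops, which is equivalent to $\sum_{t=1}^{k-1}(Z_t-1)\ge 0$, i.e., $\sum_{t=1}^{k-1} Z_t \ge k-1$.

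Next I would apply the exponential moment bound. For any $t>0$, using $1+p(e^t-1)\le \exp(p(e^t-1))$ and $(n-1)p\le \lambda$,
\[
\P\Bigl[\sum_{s=1}^{k-1} Z_s \ge k-1\Bigr] \le \exp\bigl((k-1)\bigl(\lambda(e^t-1)-t\bigr)\bigr).
\]
Minimizing at $t=\log(1/\lambda)$ (permissible since $\lambda<1$) yields the exponent $-(k-1)(\lambda-1-\log\lambda)$. A short convexity check shows $\lambda-1-\log\lambda\ge (1-\lambda)^2/2$ on $(0,1]$: both sides and their first derivatives vanish at $\lambda=1$, while the second derivatives are $1/\lambda^2$ and $1$, so the difference is nonnegative and convex on $(0,1]$. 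This gives $\P[|\cC_x|\ge k]\le \exp(-(k-1)(1-\lambda)^2/2)$; the cosmetic $k$-versus-$(k-1)$ gap is absorbed by running one additional exploration step (which only enlarges the bounding binomial sum) or by noting the bound is trivial for small $k$.

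Finally, the bound on $\sL_1$ is immediate from the union bound $\{\sL_1\ge k\}\subseteq \bigcup_{x\in V}\{|\cC_x|\ge k\}$, yielding the extra factor of $n$. This proof has no substantive obstacle; the only care required is tracking the off-by-one in the exponent and verifying the elementary convexity inequality $\lambda-1-\log\lambda\ge (1-\lambda)^2/2$.
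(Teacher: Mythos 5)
The paper cites this as a known fact (JLR, p.\ 109) and gives no proof, so there is no in-paper argument to compare against; your BFS-plus-Chernoff approach is the standard one and is essentially the proof JLR gives. Your main computation is correct: with $Z_1,\dots,Z_{k-1}$ dominated by i.i.d.\ $\Bin(n-1,p)$, Chernoff at $t=\log(1/\lambda)$ gives $\P(|\cC_x|\ge k)\le \exp(-(k-1)(\lambda-1-\log\lambda))$, and your convexity check of $\lambda-1-\log\lambda\ge(1-\lambda)^2/2$ on $(0,1]$ is fine.

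The one place you wave your hands --- the $k$ versus $k-1$ in the exponent --- deserves a closer look, because both of your proposed fixes fail. ``Running one additional exploration step'' would replace $\sum_{s=1}^{k-1}Z_s$ by $\sum_{s=1}^{k}Z_s$ while keeping the threshold $k-1$; Chernoff then yields $\exp\bigl(-(k-1)t + k\lambda(e^t-1)\bigr)$, which is \emph{worse}, not better. And ``the bound is trivial for small $k$'' is not true: the stated bound is strictly less than $1$ for every $k\ge 1$, and in fact at $k=1$ the claim $\P(|\cC_x|\ge 1)\le e^{-(1-\lambda)^2/2}$ is literally false since the left side equals $1$. So the bound as written in the Fact is off by the constant factor $e^{(1-\lambda)^2/2}$, and your derivation $\P(|\cC_x|\ge k)\le e^{-(k-1)(1-\lambda)^2/2}$ is actually the correct statement. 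This is harmless for the paper, which only ever invokes the fact in the form $\pi_{n,\lambda}(|\cC_x|\ge k)\le e^{-c_1 k}$ for some $c_1(\lambda)>0$ (see the proof of Lemma~\ref{lem:large-cluster-vertices}), which follows from either version. But you should state the corrected exponent $(k-1)$ rather than appeal to a nonexistent fix.
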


\begin{proposition}[{\cite[Lemma 5.4]{LNNP14}}]\label{fact:giant-component}
Consider $\cG(n,p)$ with $np=\lambda>1$. There exists $c(\lambda)>0$ such that for every $\epsilon>0$,
\[\pi_{n,\lambda,1}( |\sL_1-\theta_\lambda|\geq \epsilon n) \lesssim e^{-c\epsilon^2 n}\,.
\]
\end{proposition}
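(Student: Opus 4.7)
The plan is to analyze $\sL_1$ via the Karp BFS exploration of $\cG(n,p)$ and apply martingale concentration to its active-set random walk.

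I would set up the BFS as a walk $(S_t)$: $S_0 = 1$, $S_t = S_{t-1} - 1 + \xi_t$, where conditional on $\mathcal F_{t-1}$, $\xi_t \sim \text{Bin}(n - t - S_{t-1} + 1, p)$ counts the previously undiscovered neighbors of the currently processed vertex. Cluster sizes correspond to excursion lengths of $S$ above zero. A branching-process comparison (with a $\text{Bin}(n-1,p) \approx \text{Poisson}(\lambda)$ offspring Galton--Watson tree) shows that conditional on the exploration surviving past a large constant, the scaled expectation $n^{-1} E[S_{\lfloor sn \rfloor}]$ tracks the deterministic curve $\varphi(s) := 1 - e^{-\lambda s} - s$, which is strictly positive on $(0,\theta_\lambda)$ and vanishes at $s = \theta_\lambda$.

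For exponential concentration I would apply Azuma--Hoeffding to the Doob martingale $M_t := S_t - E[S_t \mid \mathcal F_{t-1}]$ summed across $t \leq n$, after truncating each Binomial increment $\xi_t$ at a large constant (the excluded event has probability $e^{-\Omega(n)}$ by Chernoff on $\max_{t \leq n} \xi_t$). The truncated differences are uniformly $O(1)$, yielding
\[
\pi_{n,\lambda,1}\!\left(\sup_{t \leq n} |S_t - n\varphi(t/n)| \geq \epsilon n\right) \leq e^{-c\epsilon^2 n}.
\]
On this concentrated event, the first return time of $S$ to zero (after its initial positive excursion) lies in $(\theta_\lambda \pm \epsilon) n$, so the first long cluster produced by the exploration has size $\theta_\lambda n \pm \epsilon n$. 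The residual graph on the $(1-\theta_\lambda)n$ unexplored vertices is $\cG((1-\theta_\lambda)n, p)$ with effective parameter $\lambda(1-\theta_\lambda) < 1$ (from the defining identity $e^{-\lambda \theta_\lambda} = 1 - \theta_\lambda$), so Fact~\ref{fact:cluster-exp-decay} implies all residual clusters have size at most $\epsilon n$ with probability $1 - e^{-\Omega(n)}$. Hence $\sL_1 = \theta_\lambda n + O(\epsilon n)$ on an event of probability at least $1 - e^{-c\epsilon^2 n}$.

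The main obstacle is the unboundedness of the Binomial increments, which prevents a direct application of Azuma--Hoeffding. The remedy is truncation combined with a separate Chernoff bound on the excluded large-increment event, both costing only $e^{-\Omega(n)}$ in probability and thus preserving the sharp $e^{-c\epsilon^2 n}$ rate. The other subtle point is arranging for the BFS to reach the giant cluster in the first place: this is handled by running the exploration and identifying the first excursion that survives past a large threshold, whose occurrence has probability $\theta_\lambda + o(1)$ by the branching-process comparison, and whose length then concentrates around $\theta_\lambda n$ by the martingale bound above.
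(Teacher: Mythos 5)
Your high-level strategy---BFS exploration with the active-set walk tracking $\varphi(s)=1-e^{-\lambda s}-s$, concentration of the walk, identification of the first long excursion, and an application of Fact~\ref{fact:cluster-exp-decay} to the subcritical residual since $\lambda(1-\theta_\lambda)<1$---is a sound route to this classical fact, but the concentration step has a genuine gap. You claim that truncating each increment $\xi_t$ at a large \emph{constant} $C$ leaves an excluded event of probability $e^{-\Omega(n)}$. This is false: the increments are conditionally $\Bin(\cdot,p)$ with mean at most $\lambda$, and $\max_{t\le n}\xi_t$ concentrates around $\log n/\log\log n$, so $\mathbb P(\max_t\xi_t>C)\to 1$ for any fixed $C$. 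Truncating at $\log n$ does not rescue the argument either: the Azuma rate becomes $\exp(-\Theta(n/(\log n)^2))$ and the excluded-event probability is roughly $\exp(-\Theta((\log n)^2))$, neither of which matches $\exp(-c\epsilon^2 n)$.

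To fix this, replace bounded-differences Azuma by a Bernstein/Freedman-type martingale inequality that controls the conditional moment generating function of each $\xi_t$; since each increment is sub-exponential with parameters depending only on $\lambda$, this directly yields the rate $\exp\bigl(-c(\lambda)\min(\epsilon,\epsilon^2)n\bigr)$ with $c$ independent of $\epsilon$, as the statement requires. Alternatively, no martingale machinery is needed at all: in a BFS that processes one vertex per round (restarting from a fresh vertex whenever the queue empties), the number of undiscovered vertices after $t$ rounds is $\Bin(n-1,(1-p)^t)$ up to a small correction for restarts, so classical Chernoff for a single Binomial gives the concentration of $|D_t|$ around $n(1-e^{-\lambda t/n})$. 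A truncation argument can also be salvaged, but you must choose the threshold $C=C(\epsilon)$ large enough that the accumulated bias $n\,\mathbb E[(\xi_1-C)_+]\le\epsilon n/2$, and control $\sum_t(\xi_t-C)_+$ by a Chernoff bound on the \emph{sum} (not on the maximum); this then gives a rate constant depending on $\epsilon$, which suffices for the applications in this paper but does not literally deliver $c=c(\lambda)$. The remainder of your argument---branching-process survival probability $\theta_\lambda+o(1)$, first-long-excursion identification, and subcriticality of the residual---is fine.
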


For the proof of Theorem~\ref{mainthm:sw}, following~\cite{GoJe99,GSV15} define the drift function for the average size of the largest color class of the Swendsen--Wang dynamics 
\[
F_\lambda(z)=\left\{
\begin{array}{ll}
\theta_\lambda + \tfrac 1q (1-\theta_{\lambda z}) \qquad & \mbox{for $z> 1/\lambda$}\\ 
\tfrac 1q \qquad & \mbox{for $z\leq 1/\lambda$}
\end{array}
\right\}\,.
\]
The function $F_\lambda(z)$ has, for some values of $\lambda$ a second fixed point besides $\frac 1q$, which we denote by $a_\lambda>1/q$, which solves
\[\log \tfrac {(q-1)a}{1-a}= \lambda(a-\tfrac{1-a}{q-1})\,.
\]
\begin{proposition}[{\cite[Lemma 5]{GSV15}}] If $\lambda>\lambda_s$, the fixed point $a_\lambda$ is such that $\lambda a_\lambda >1$ and moreover if $b_\lambda=\frac{1-a_\lambda}{q-1}$, we have $\lambda b_\lambda<1$. Moreover, if $q>2$ and $\lambda>\lambda_s$, $a_\lambda$ is a Jacobian attractive fixed point of $F_\lambda(z)$ so that $|F'(a_\lambda)|<1$.
\end{proposition}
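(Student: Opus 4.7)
\emph{Proof proposal.} The plan is to reparametrize the fixed-point equation so that both assertions reduce to explicit one-variable inequalities. Setting $b = (1-a)/(q-1)$, the defining relation $\log\frac{(q-1)a}{1-a} = \lambda\bigl(a - \frac{1-a}{q-1}\bigr)$ becomes $\log(a/b) = \lambda(a-b)$. Introducing $z := \log(a/b) \geq 0$ and combining with $a + (q-1)b = 1$ forces
\[ a = \frac{e^z}{e^z + q - 1}\,,\qquad b = \frac{1}{e^z + q - 1}\,,\qquad a - b = \frac{e^z - 1}{e^z + q - 1}\,, \]
so the equation collapses to $\lambda = g(z) := z + \frac{qz}{e^z - 1}$. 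Since $\lambda_s = \min_{z \geq 0} g(z)$ by definition, and $g$ has a unique interior minimum at some $z_s > 0$ (and is strictly increasing on $(z_s,\infty)$), for every $\lambda > \lambda_s$ the largest positive root $z_+ > z_s$ of $g(z) = \lambda$ exists and determines $a_\lambda = e^{z_+}/(e^{z_+} + q - 1)$.

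Part (i) is now immediate: direct substitution gives $\lambda a_\lambda = z_+ e^{z_+}/(e^{z_+} - 1)$ and $\lambda b_\lambda = z_+/(e^{z_+} - 1)$. Thus $\lambda a_\lambda > 1$ is equivalent to $(z_+-1)e^{z_+} + 1 > 0$, which vanishes at $z_+ = 0$ and has positive derivative $z_+ e^{z_+}$ for $z_+ > 0$; and $\lambda b_\lambda < 1$ is the standard estimate $e^{z_+} > 1 + z_+$.

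For part (ii), implicit differentiation of $e^{-\mu\theta_\mu} = 1 - \theta_\mu$ yields $\theta'_\mu = \theta_\mu(1-\theta_\mu)/(1 - \mu(1-\theta_\mu))$, which is positive for $\mu > 1$, hence on the branch $z > 1/\lambda$,
\[ F_\lambda'(z) = -\frac{\lambda}{q}\,\theta'_{\lambda z} \;<\; 0\,. \]
Consequently the upper bound $F_\lambda'(a_\lambda) < 1$ is automatic, and the real content of part (ii) is the reverse bound $F_\lambda'(a_\lambda) > -1$, equivalently $\tfrac{\lambda}{q}\theta'_{\lambda a_\lambda} < 1$. The plan is to eliminate $\theta_{\lambda a_\lambda}$ via the identity $1 - \theta_{\lambda a_\lambda} = q(a_\lambda - \theta_\lambda)$ (which follows from $a_\lambda = F_\lambda(a_\lambda)$), substitute $a_\lambda = e^{z_+}/(e^{z_+}+q-1)$ and $\lambda a_\lambda = z_+ e^{z_+}/(e^{z_+}-1)$, and reduce the claim to a one-variable transcendental inequality in $z_+$ (with $\theta_\lambda$ remaining implicit via $e^{-\lambda\theta_\lambda} = 1-\theta_\lambda$). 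The main obstacle is verifying this inequality for all $z_+ > z_s$; the plan is to check that it degenerates to equality at the endpoint $z_+ = z_s$ (which corresponds to the saddle-node bifurcation $\lambda = \lambda_s$ where the two roots of $g(z) = \lambda$ merge) and then to establish strict monotonicity as $z_+$ increases along the branch. It is here that the hypothesis $q > 2$ enters essentially: a factor of $(q-2)$ emerges after simplification and makes the strictness hold precisely in that regime, consistent with the fact that the nontrivial metastability window $(\lambda_s,\lambda_S)$ is nonempty only when $q > 2$.
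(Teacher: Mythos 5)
The paper gives no proof of this proposition---it is cited from \cite{GSV15}---so what follows compares your argument to the known result rather than to an in-paper proof. Part (i) of your proposal is correct and complete: the reparametrization by $z = \log(a/b)$, collapsing the fixed-point equation to $\lambda = g(z) := z + \frac{qz}{e^z-1}$, is exactly the right move and reduces both inequalities $\lambda a_\lambda > 1$ and $\lambda b_\lambda < 1$ to elementary one-variable facts.

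For Part (ii) there are two problems. First, the expression for $F_\lambda$ as printed in the paper appears to contain a typo: consistency with the defining relation $\log(a/b) = \lambda(a-b)$ for $a_\lambda$ (and with the informal description of the Swendsen--Wang drift) requires $F_\lambda(z) = z\theta_{\lambda z} + \frac{1}{q}(1 - z\theta_{\lambda z})$ for $z > 1/\lambda$, since after the edge step the giant cluster occupies a $z\theta_{\lambda z}$ fraction of $n$ and the remaining mass is split uniformly across $q$ colors. Your computation $F_\lambda'(z) = -\frac{\lambda}{q}\theta_{\lambda z}'$ takes the printed formula literally and therefore gets the wrong sign; with the corrected $F$ one has $F_\lambda'(z) = \frac{q-1}{q}\cdot\frac{\theta_{\lambda z}}{1 - \lambda z(1-\theta_{\lambda z})} > 0$, so the substantive bound is $F_\lambda'(a_\lambda) < 1$, not $F_\lambda'(a_\lambda) > -1$. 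Relatedly, your proposed identity $1 - \theta_{\lambda a_\lambda} = q(a_\lambda - \theta_\lambda)$ follows only from the misprinted formula; the correct identity, in your $z$-variable, is $\theta_{\lambda a_\lambda} = 1 - e^{-z_+}$ (equivalently $1 - \theta_{\lambda a_\lambda} = b_\lambda/a_\lambda$). Second, even under your setup, you leave the key transcendental inequality as a plan rather than verifying it. For the record, the plan does close once the corrections above are made: from $\theta_{\lambda a} = 1 - e^{-z}$ and $\lambda a(1 - \theta_{\lambda a}) = z/(e^z-1)$ one gets $F_\lambda'(a_\lambda) = \frac{(q-1)(e^z-1)^2}{q\,e^z(e^z - 1 - z)}$, and the inequality $F_\lambda'(a_\lambda) < 1$ rearranges to $e^{2z} + (q-2-qz)e^z - (q-1) > 0$, which is exactly the numerator of $g'(z)$. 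Since $z_+$ lies on the increasing branch of $g$, $g'(z_+) > 0$ and the claim follows; this also confirms your bifurcation intuition that equality holds at $z_s$, while $q > 2$ enters only to ensure $z_s > 0$ exists (as $g'(0^+) = 1 - q/2$).
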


Similarly to the above, we can define the function $f$ given by 
\[
f(\theta)= \theta_{\lambda(1+(q-1)\theta)/q}\,,
\] which governs the mean drift of the size of the giant component in Chayes--Machta dynamics. We can also define $\Theta_r$ to be the largest solution to $e^{-\lambda x}= 1-\frac{qx}{1+(q-1)x}$.
Following  \cite{BlSi14}, let $\Theta_{\mathrm{min}}(\lambda,q)=\max\{0,(q-\lambda)/(\lambda(q-1))\}$, observe that if $\lambda<\lambda_S$, $\lambda(\Theta_{\mathrm{min}} +q^{-1}(1-\Theta_{\mathrm{min}}))=1$, and define the drift function $g(\theta)=f(\theta)-\theta$.

\begin{proposition}[{\cite[Lemma 2.14]{BlSi14}}]
When $q>2$ and $\lambda>\lambda_s$, the drift function $g$ has two roots, $\Theta^*<\Theta_r$ in $(\Theta_{\mathrm{min}},1]$; moreover, $g$ is strictly positive on $(\Theta^*,\Theta_r)$.
\end{proposition}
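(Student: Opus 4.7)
The plan is to study the scalar function $g(\theta)=f(\theta)-\theta$ on $[\Theta_{\mathrm{min}},1]$ by combining endpoint signs of $g$, the identification of $\Theta_r$ as a fixed point of $f$, and a concavity argument that bounds the number of zeros. Writing $\mu(\theta):=\lambda(1+(q-1)\theta)/q$ so that $f(\theta)=\theta_{\mu(\theta)}$, the left-endpoint value $\mu(\Theta_{\mathrm{min}})=1$ (valid when $\lambda<\lambda_S$) together with $\theta_\mu\to 0$ as $\mu\to 1^+$ gives $f(\Theta_{\mathrm{min}})=0$ and hence $g(\Theta_{\mathrm{min}})=-\Theta_{\mathrm{min}}<0$, while $f(1)=\theta_\lambda<1$ gives $g(1)<0$. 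To identify $\Theta_r$ as a zero, rearrange its defining equation $e^{-\lambda x}=1-qx/(1+(q-1)x)$ into the fixed-point form $e^{-\mu(x)x}=1-x$ of $f$; this yields $f(\Theta_r)=\Theta_r$ and hence $g(\Theta_r)=0$.

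To produce a second zero $\Theta^*<\Theta_r$, use the hypothesis $\lambda>\lambda_s$ to establish the transverse-crossing property $g'(\Theta_r)<0$ strictly. The threshold $\lambda_s$ is characterized as the tangency value: at $\lambda=\lambda_s$ the curves $f$ and the diagonal meet tangentially at $\Theta_r$, so $g'(\Theta_r)=0$; for $\lambda>\lambda_s$ the tangency is destroyed, and a direct differentiation argument (or the corresponding statement for the Potts drift $F_\lambda$ in the preceding proposition, transported through the Edwards--Sokal correspondence $a=(1+(q-1)\theta)/q$ together with the bound $|F'_\lambda(a_\lambda)|<1$) yields $g'(\Theta_r)<0$. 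Consequently $g(\theta)>0$ for $\theta$ just below $\Theta_r$, and combined with $g(\Theta_{\mathrm{min}})<0$ and continuity, the intermediate value theorem furnishes $\Theta^*\in(\Theta_{\mathrm{min}},\Theta_r)$ with $g(\Theta^*)=0$.

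To exclude further zeros and pin down the sign of $g$, implicitly differentiate the defining relation $e^{-\mu(\theta)f(\theta)}=1-f(\theta)$ to obtain an explicit formula for $f'$, and verify by a second differentiation that $f$ is strictly concave on $(\Theta_{\mathrm{min}},1)$. Strict concavity of $g=f-\theta$ then caps the zero set on $(\Theta_{\mathrm{min}},1]$ at two, so the zeros are exactly $\{\Theta^*,\Theta_r\}$, and concavity forces $g>0$ precisely on $(\Theta^*,\Theta_r)$ and $g<0$ on its complement in $(\Theta_{\mathrm{min}},1]$.

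The main obstacle is the transverse-crossing step: extracting the strict inequality $g'(\Theta_r)<0$ from the hypothesis $\lambda>\lambda_s$ requires careful calculus on the defining equation of $\theta_\mu$ near the singular boundary $\mu=1$, since this is where the Erd\H{o}s--R\'enyi giant fraction behaves non-smoothly. The concavity verification in the final step is lengthy but routine.
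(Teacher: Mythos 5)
The paper offers no internal proof: the statement is imported directly from~\cite[Lemma~2.14]{BlSi14}, so there is no argument to compare against. Evaluating your attempt on its own, the overall plan (endpoint signs, a transversal crossing at $\Theta_r$ driven by $\lambda>\lambda_s$, concavity to cap the number of zeros at two) is sensible, but it founders on a concrete algebraic error at the step identifying $\Theta_r$ as a zero of $g$.

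You assert that the defining equation $e^{-\lambda x}=1-\tfrac{qx}{1+(q-1)x}$ rearranges into the fixed-point form $e^{-\mu(x)x}=1-x$ of $f(\theta)=\theta_{\mu(\theta)}$. It does not. Take $q=3$ and $\lambda=\lambda_c=4\log 2$: then $\Theta_r=\tfrac12$, $\mu(\Theta_r)=\tfrac{8\log 2}{3}$, and $\theta_{\mu(\Theta_r)}=\tfrac34$, so $f(\Theta_r)=\tfrac34\neq\tfrac12$ and $g(\Theta_r)\neq 0$. The correct manipulation substitutes $y=\tfrac{q\theta}{1+(q-1)\theta}$, for which $\mu(\theta)\,y=\lambda\theta$; the $\Theta_r$ equation then reads $e^{-\mu(\theta)y}=1-y$, i.e.\ $\theta_{\mu(\theta)}=\tfrac{q\theta}{1+(q-1)\theta}$, which is the fixed-point equation of $\tilde f(\theta)=\tfrac{1+(q-1)\theta}{q}\,\theta_{\mu(\theta)}$ --- the actual expected one-step giant fraction in Chayes--Machta (the activated set has fraction $\tfrac{1+(q-1)\theta}{q}$ of $n$, and the new giant is a $\theta_{\mu(\theta)}$-fraction of that). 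The displayed formula for $f$ in the paper appears to drop this prefactor, which may have misled you, but your claimed rearrangement is false on its own terms. Your endpoint computations do survive the fix, since $\tilde f(\Theta_{\mathrm{min}})=0$ and $\tilde f(1)=\theta_\lambda<1$, but the final concavity step does not transfer automatically: $\tilde f$ is a product of a linear factor and the concave map $\theta\mapsto\theta_{\mu(\theta)}$, and a product of positive concave functions need not be concave, so that step must be re-verified.
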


\subsubsection*{Mixing time and spectral gap}
In this section, we introduce the quantities of interest regarding the time for the Swendsen--Wang and Glauber dynamics to reach equilibrium. Consider a Markov chain with finite state space $\Omega$ and transition matrix $P$ reversible with respect to $\pi$. For two measures $\nu,\pi$, define their total variation distance by
\[\|\nu-\pi\|_\tv = \sup_{A\subset \Omega} |\nu(A)-\pi(A)|= \tfrac 12 \|\nu - \pi\|_{\ell^1}\,.
\]
Then the mixing time of $P$ is defined as 
\[\tmix=\inf \left\{t:\max_{X_0\in \Omega} \|P^t(X_0,\cdot)-\pi \|_\tv<1/(2e)\right\}\,.
\]

A related quantity that is sometimes easier to work with is the spectral gap of $P$; Since $P$ is reversible with respect to $\pi$, we can enumerate its spectrum from largest to smallest as $1=\lambda_1>\lambda_2>...$; then the spectral gap of $P$ is defined as $\gap=1-\lambda_2$. The following is a standard comparison between the inverse spectral gap and the mixing time of a Markov chain with transition matrix $P$ (see e.g.,~\cite{LPW09}):
\begin{align}\label{eq:tmix-gap}
\gap^{-1}-1 \leq \tmix \leq \log(2e/\pi_{\min}) \gap^{-1}\,.
\end{align}

\subsubsection*{Spectral gap comparisons}

The following comparison inequalities between the aforementioned Markov chains are due to Ullrich.

\begin{proposition}[\cite{Ul13}]\label{thm:Ullrich-comparison}
Let $q \geq 2$ be integer.
Let $\gap_{\rc}$ be the spectral gap of Glauber dynamics FK model on a graph $G=(V,E)$ and let $\gap_{\textsc {sw}}$
be the
spectral gap of Swendsen--Wang.
Then
\begin{align}
(1-p+p/q)\gap_{\rc} &\leq\gap_{\textsc {sw}}\leq 8 \gap_{\rc}\, |E| \log |E| \,.\label{eq-ullrich2}
\end{align}
\end{proposition}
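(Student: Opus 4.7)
The plan is to establish both inequalities via comparison of Dirichlet forms. Writing
\[
\cE_\bullet(f,f)=\tfrac12\sum_{\omega,\omega'}\pi(\omega)P_\bullet(\omega,\omega')(f(\omega)-f(\omega'))^2,
\]
and recalling that both chains are reversible with respect to $\pi=\pi_{n,\lambda,q}$, one has $\gap_\bullet=\inf_{f}\cE_\bullet(f,f)/\var_\pi(f)$. Consequently it suffices to prove, for every $f:\Omega_{\rc}\to\R$, the pointwise bounds $(1-p+p/q)\cE_{\rc}(f,f)\le \cE_{\textsc{sw}}(f,f)\le 8|E|\log|E|\cdot \cE_{\rc}(f,f)$.

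For the lower bound I would lift both dynamics to the Edwards--Sokal joint measure $\nu$ on spin-edge pairs $(\sigma,\omega)$. A single Swendsen--Wang step on $\omega$ is the $\omega$-marginal of the two-step joint transition: (i) sample $\sigma$ uniformly on the clusters of $\omega$, and (ii) re-percolate independently within color classes of $\sigma$. On the joint chain, the FK heat-bath update at a uniformly chosen edge $e$ admits a natural embedding: I would exhibit an event in the joint space, of probability at least $1-p+p/q$, on which the SW transition restricted to edge $e$ coincides with the heat-bath resample at $e$ given $\omega\restriction_{E\setminus\{e\}}$. Averaging over $e$ and invoking the standard convex-combination Dirichlet-form inequality yields $\cE_{\textsc{sw}}(f,f)\geq (1-p+p/q)\cE_\rc(f,f)$. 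The constant $1-p+p/q=(1-p)+p/q$ is precisely the Edwards--Sokal marginal probability that a single edge agrees with its joint resample: either the endpoints lie in distinct color classes (forcing $\omega(e)=0$, contributing $1-p$) or they share a color and $\omega(e)=1$ is retained with probability $p/q$.

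For the upper bound I would construct canonical paths in configuration space. Given $\omega\neq\omega'$ with $P_{\textsc{sw}}(\omega,\omega')>0$, fix an interpolation $\omega=\omega^{(0)},\omega^{(1)},\dots,\omega^{(T)}=\omega'$ with $T=|\omega\xor\omega'|\le |E|$ and each $\omega^{(i)}\to\omega^{(i+1)}$ a single-edge Glauber move. The canonical-path lemma then gives $\cE_{\textsc{sw}}\leq B\,\cE_\rc$ where $B$ bounds the maximum congestion of any fixed Glauber transition across all SW transitions routed through it. By randomizing the order of edge flips along each path and averaging, the congestion is reduced to $O(|E|\log|E|)$: this is a coupon-collector calculation, since a uniform Glauber step touches a given edge with probability $1/|E|$, so a full SW transition with up to $|E|$ flipped edges is realized along a random path after $\Theta(|E|\log|E|)$ Glauber steps on average. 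Tracking the constants in this combinatorial estimate yields the factor $8$.

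The main difficulty is the extreme non-locality of the Swendsen--Wang step: it may flip every edge simultaneously, and the new configuration is governed by the cluster structure of the intermediate spin coloring. For the lower bound this is handled by the Edwards--Sokal lift above, but identifying the sharp coupling event that yields exactly $1-p+p/q$ requires careful bookkeeping of the joint law on a designated edge. For the upper bound, controlling congestion along canonical paths is delicate since many SW transitions may route through the same Glauber flip; extracting the sharp $|E|\log|E|$ factor (rather than a naive $|E|^2$) hinges on the random ordering of edges along each path, and this randomization step is the main technical hurdle.
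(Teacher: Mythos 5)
This proposition is quoted from Ullrich~\cite{Ul13}; the paper gives no proof of its own, so the comparison must be against the cited source rather than against an internal argument. Your overall framework---reducing both inequalities to pointwise Dirichlet-form bounds $(1-p+p/q)\cE_{\rc}\le\cE_{\textsc{sw}}\le 8|E|\log|E|\cdot\cE_{\rc}$ via the variational characterization of the spectral gap---is correct and is what Ullrich does. Your lower-bound sketch via the Edwards--Sokal lift also matches the spirit of the source: Ullrich works with the single-bond heat-bath update $P_e$, shows $\cE_{\textsc{sw}}(f,f)\ge(1-p+p/q)\,\cE_{P_e}(f,f)$ for each fixed edge $e$, and then averages over $e$ (since $\cE_\rc=\frac1{|E|}\sum_e\cE_{P_e}$). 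Your identification of the constant as the Edwards--Sokal marginal probability that a single designated edge retains its heat-bath conditional law is the right bookkeeping.

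Your upper-bound argument, however, has a genuine gap. The canonical-path comparison theorem bounds $\cE_{\textsc{sw}}/\cE_\rc$ by $\max_{(\eta,\eta')}\frac{1}{Q_\rc(\eta,\eta')}\sum_{(\omega,\omega'):\,(\eta,\eta')\in\gamma_{\omega\omega'}}|\gamma_{\omega\omega'}|\,Q_{\textsc{sw}}(\omega,\omega')$, i.e.\ by the worst-case weighted congestion through a fixed Glauber transition. A coupon-collector bound controls how long a random Glauber walk takes to visit every edge; it says nothing about how many SW transitions $(\omega,\omega')$ route their path through a fixed Glauber flip $(\eta,\eta')$, nor about the ratio $Q_{\textsc{sw}}(\omega,\omega')/Q_\rc(\eta,\eta')$. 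Randomizing the order of flips along each path reduces the \emph{expected} path length seen by any one edge, but it does not, on its own, produce the $|E|\log|E|$ congestion bound, and no mechanism in your sketch controls the number of distinct SW moves sharing a given Glauber edge---a priori this is exponential. Ullrich's actual route is different: he compares the random-scan heat bath $\frac1{|E|}\sum_eP_e$ to a product (systematic-scan) chain $\prod_eP_e$, with the $|E|\log|E|$ factor arising from a general scan-comparison estimate, and then relates the product chain to $P_{\textsc{sw}}$ via the Edwards--Sokal decomposition $P_{\textsc{sw}}=T_{\omega\to\sigma}T_{\sigma\to\omega}$. You would need to replace the coupon-collector heuristic with such a scan-comparison argument (or with a genuine congestion estimate plus an encoding of which SW pair routes through a given flip) for the upper bound to go through.
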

The proof of~\eqref{eq-ullrich2} further extends to all real $q> 1$, whence
\begin{align}\gap_{\rc} &\lesssim\gap_{\textsc {cm}}\lesssim \gap_{\rc}\, |E|\log |E| \label{eq-ullrich2-realq}\,,
\end{align}
as was observed (and further generalized) by  Blanca and Sinclair~\cite[\S5]{BlSi14}, where $\gap_{\textsc{cm}}$ is the spectral gap of Chayes--Machta dynamics.

\section{Slow mixing of Swendsen--Wang dynamics}
Towards the proof of Theorem~\ref{mainthm:sw}, we first establish some preliminary estimates. For $\omega\in \Omega_{\rc}$, we will frequently be interested in bounding the following quantity:
\[S_M:=S_M(\omega)=\{x\in V:|\cC_x|>M\}\,.
\]
The bottlenecks in the proofs of Theorems~\ref{mainthm:sw}--\ref{mainthm:fk} both rely  on the following estimate.

\begin{lemma}\label{lem:large-cluster-vertices}
Consider $\omega\sim \cG(n,p)$ with $np=\lambda<1$ fixed. There exists $c(\lambda)>0$ such that for every $\rho>0$, there exists $M_0(\lambda,\rho)$ such that for every $M\geq M_0$,
\[\pi_{n,\lambda} \left(|S_M|\geq \rho n\right) \lesssim e^{-c\rho n}\,.
\]
\end{lemma}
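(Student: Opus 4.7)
The strategy is to upgrade the one-vertex tail bound of Fact~\ref{fact:cluster-exp-decay} into an exponential-in-$n$ concentration for $|S_M|$ via a sequential cluster exploration combined with a Chernoff argument. Order the vertices $1,\dots,n$ and iteratively reveal the clusters of $\omega$: set $V_0=V$, and, for $i\geq 1$ with $V_{i-1}\neq\emptyset$, let $v_i$ be the lowest-indexed vertex of $V_{i-1}$, let $\cC_i$ be its connected component in the subgraph of $\omega$ induced on $V_{i-1}$, and put $V_i=V_{i-1}\setminus\cC_i$; once $V_{i-1}=\emptyset$, declare $\cC_i=\emptyset$. This produces a partition of $V$ into $\cC_1,\dots,\cC_N$ with $N\leq n$, and
\[
|S_M|=\sum_{i=1}^{n} |\cC_i|\,\mathbf{1}\{|\cC_i|>M\}.
\]

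The key point is that, conditionally on the $\sigma$-algebra $\cF_{i-1}:=\sigma(\cC_1,\dots,\cC_{i-1})$, none of the edges inside $V_{i-1}$ have been examined, so the induced graph on $V_{i-1}$ is still distributed as $\cG(|V_{i-1}|,p)$, with density parameter $|V_{i-1}|p\leq\lambda<1$. Writing $I:=(1-\lambda)^2/2$, Fact~\ref{fact:cluster-exp-decay} applied in the restricted graph therefore yields the uniform conditional tail bound
\[
\P(|\cC_i|\geq k\mid \cF_{i-1})\leq e^{-Ik}\qquad\text{for every }k\geq 1.
\]
A routine geometric-series computation then gives, for every $t\in(0,I)$, a conditional moment-generating-function estimate of the form
\[
\E\!\left[e^{t\,|\cC_i|\mathbf{1}\{|\cC_i|>M\}}\;\big|\;\cF_{i-1}\right]\;\leq\; 1+C_\lambda\, e^{-(I-t)M},
\]
for some $C_\lambda>0$ depending only on $\lambda$ and $t$.

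Taking $t=I/2$ and iterating this bound across all $n$ steps of the exploration (using $1+x\leq e^x$), one obtains $\E[e^{t|S_M|}]\leq \exp\bigl(nC_\lambda e^{-IM/2}\bigr)$. Markov's inequality then produces
\[
\pi_{n,\lambda}(|S_M|\geq \rho n)\;\leq\;\exp\!\Bigl(-\tfrac{I\rho}{2}n+nC_\lambda e^{-IM/2}\Bigr),
\]
and the lemma follows by choosing $M_0=M_0(\lambda,\rho)$ large enough that the second summand is at most a quarter of the first; this yields $c=I/4=(1-\lambda)^2/8$, depending only on $\lambda$ as required.

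\textbf{Main obstacle.} The only delicate point is handling the dependence among cluster sizes: a direct Markov bound on $\E|S_M|\lesssim n e^{-IM/2}$ saves only a factor $\sim 1/\rho$ and is therefore exponentially too weak. The sequential exploration circumvents this because, regardless of what has already been revealed, the next cluster is dominated by the cluster of a vertex in a subcritical $\cG(n',p)$ with $n'p\leq\lambda$; this gives a uniform conditional MGF bound, and iterated expectation turns $n$ per-cluster first-moment-style estimates into the desired exponential-in-$n$ concentration.
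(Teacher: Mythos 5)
Your proof is correct, and it takes a genuinely different route from the paper's. The paper first bounds the number $Y_M$ of clusters of size $>M$ via stochastic domination by $\Bin(n,e^{-c_1 M})$ and Azuma--Hoeffding, then bounds $|S_M|\le\sum_{i\le K}\sL_i$ by a union bound over $\binom{n}{K}$ choices of $K$ vertices, each term controlled by domination of $\big(|\cC_{x_i}|\big)$ by i.i.d.\ sub-exponential variables. Your approach dispenses with both the cluster-count step and the combinatorial union bound: the sequential cluster-exploration filtration gives a clean conditional MGF bound for $|\cC_i|\mathbf{1}\{|\cC_i|>M\}$ that is uniform in the history (because the unexplored subgraph is always a $\cG(n',p)$ with $n'p\le\lambda<1$), and iterating the tower property directly produces $\E[e^{t|S_M|}]\le\exp\bigl(nC_\lambda e^{-(I-t)M}\bigr)$, from which Markov closes the argument. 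This is tighter and more self-contained than the paper's proof, and it makes the dependence structure among clusters explicit rather than reducing to a large union bound with entropy factor $(en/K)^K$. One small arithmetic slip: with the choice ``second summand at most a quarter of the first'' you get $c=3I/8$, not $I/4$; either constant depends only on $\lambda$, so the conclusion is unaffected.
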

\begin{proof}
Recall that by Fact~\ref{fact:cluster-exp-decay}, there exists $c_1(\lambda)>0$ such that $\pi_{n,\lambda}(|\cC_{x}| \geq k)\leq e^{-c_1 k}$ for all $k$. Moreover, conditioned on other clusters, the remaining graph is distributed as $\cG(m,\lambda)$ for $m\leq n$, so that for any $\ell$ vertices $y_1,...,y_\ell$,
\[ \pi_{n,\lambda}\bigg(|\cC_{x}|\geq k \;\Big|\; \cC_{y_1}\,,\ldots\,,\cC_{y_\ell}~,~\cC_x \cap \big(\mbox{$\bigcup_{i=1}^\ell \cC_{y_i}$}\big)=\emptyset\bigg) \leq e^{-c_1 k}\,.\]

Let $Y_M$ be the number of clusters with at least $M$ vertices; then  $Y_M \preceq \Bin(n,e^{-c_1M})$ so that by Azuma--Hoeffding inequality,
\[\pi_{n,\lambda}(Y_M \geq n e^{-c_1 M}+t ) \leq e^{-t^2/(2n)}\,.
\]
Now let 
\[ K=\left(e^{-c_1M}+M^{-1}\right)n \,;\] plugging into the Azuma-Hoeffding bound, we obtain
\begin{align}\label{eq:conditional-cluster-size} \pi_{n,\lambda}(|S_M|\geq \rho n) & \leq \pi_{n,\lambda}\left(\sum _{i=1}^{K} \sL_i\geq \rho n  \right)+ e^{-n/(2 M^2)} \,.
\end{align}
In order to bound the right-hand side above, fix vertices $x_1,...,x_K$; the joint law of $\cC_{x_1},...,\cC_{x_K}$ is dominated by the sum of $K$ i.i.d.\ random variables $Z_1,...,Z_K$, where, for some $a(\lambda),b(\lambda),\nu(\lambda)>0$ (independent of $M$ and $n$), $Z_1$ is sub-exponential with parameters $(\nu,b)$ and has mean $a$. By the definition of $K$, for any sufficiently large $M$ (depending on $\rho$), $K\mathbb E[Z_i]=Ka \leq \rho n/2$. By a union bound and symmetry, we have
\begin{align*}
 \pi_{n,\lambda}\bigg(\sum_{i=1}^K \sL_i\geq \rho n\bigg) & \leq \binom nK \pi_{n,\lambda} \bigg(\sum_{i=1}^K |\cC_{x_i}| \geq \rho n\bigg) \\
& \leq \left(\frac {en}K\right)^K \pi_{n,\lambda} \left(\sum_{i=1}^K Z_i \geq K\mathbb E[Z_i] +\rho n/2\right)\,.
\end{align*}
Moreover, $\sum_{i=1}^K Z_i$ is also sub-exponential with parameters $(K \nu,b)$. Therefore, there exists $c_2(\lambda)>0$ so that for all $\rho>0$, there exists $M_0(\lambda,\rho)$ such that for all $M\geq M_0$,
\begin{align*}
\pi_{n,\lambda}\bigg(\sum_{i=1}^K \sL_i\geq \rho n\bigg) \leq \left( \frac{e}{e^{-c_1M}+M^{-1}}\right)^{(e^{-c_1M}+M^{-1} )n} e^{-\frac {\rho n}{4b}} \lesssim e^{-c_2 \rho n}\,.
\end{align*}
Plugging this bound in to~\eqref{eq:conditional-cluster-size} concludes the proof.
\end{proof}

In the coloring stage of the Swendsen--Wang dynamics, the following simple application of a Chernoff-Hoeffding inequality proves useful.

\begin{lemma}\label{lem:hoeffding}
Consider an FK realization $\omega$ on $n$ vertices and suppose $|S_M(\omega)| \leq \epsilon n$ for some $M>0$. Independently color each cluster of $\omega$ collectively red with probability $\alpha\in [0,1]$, and let $R$ be the set of all red vertices. For all $\delta>0$, 
\[\mathbb P (\left| |R|-\alpha n\right| \geq (\epsilon+\delta) n) \leq 2\exp({-\tfrac{\delta^2 n}{2 M^2}})\,.
\]
\end{lemma}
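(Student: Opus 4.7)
The plan is to split the clusters of $\omega$ according to size and apply a Hoeffding-type concentration to the small ones, while bounding the contribution of large clusters deterministically. Write $|R|=R_s+R_\ell$, where $R_s$ (resp.\ $R_\ell$) is the number of red vertices lying in clusters of size $\leq M$ (resp.\ $>M$). Every vertex contributing to $R_\ell$ belongs to $S_M$, so both $R_\ell$ and $\mathbb{E}R_\ell=\alpha|S_M|$ lie in $[0,|S_M|]$; consequently $\bigl|R_\ell-\alpha|S_M|\bigr|\leq|S_M|\leq\epsilon n$. Combined with $\mathbb{E}|R|=\alpha n$, this shows that the event $\bigl||R|-\alpha n\bigr|\geq(\epsilon+\delta)n$ forces $|R_s-\mathbb{E}R_s|\geq\delta n$.

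For the small-cluster contribution I would write $R_s=\sum_{|C|\leq M}|C|\,X_C$, where the indicators $X_C$ are independent $\mathrm{Bernoulli}(\alpha)$ variables (one per cluster). Each summand lies in the deterministic interval $[0,|C|]\subset[0,M]$, and since clusters are disjoint and nonempty there are at most $n$ of them, giving $\sum_{|C|\leq M}|C|^2\leq nM^2$. The standard Hoeffding inequality for sums of independent bounded random variables then yields
$$\mathbb{P}\bigl(|R_s-\mathbb{E}R_s|\geq\delta n\bigr)\leq 2\exp\!\left(-\frac{2\delta^2 n^2}{\sum_{|C|\leq M}|C|^2}\right)\leq 2\exp\!\left(-\frac{2\delta^2 n}{M^2}\right),$$
which is stronger than the claimed bound $2\exp(-\delta^2 n/(2M^2))$.

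There is no real obstacle here: the proof is a one-line concentration estimate once one has peeled off the deterministic contribution of $S_M$, and the fact that different clusters are colored independently is precisely what lets Hoeffding apply directly to $R_s$. The only small choice is how to bound $\sum|C|^2$; taking each $|C|^2\leq M^2$ and summing over at most $n$ small clusters immediately produces the factor $M^2$ in the target denominator, while the sharper bound $\sum|C|^2\leq M\sum|C|\leq Mn$ would yield an even stronger exponent but is not needed for the statement.
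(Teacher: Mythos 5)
Your proof is correct and follows essentially the same route as the paper: peel off the contribution of vertices in clusters of size $>M$ (bounded deterministically by $|S_M|\leq\epsilon n$) and apply Hoeffding to the remaining sum $\sum_{|C|\leq M}|C|X_C$ with increments bounded by $M$. Your presentation is slightly cleaner in that it handles both tails at once via the triangle inequality (rather than the paper's separate $|R\cup S_M|$ manipulation for each tail) and yields a marginally sharper exponent, but the core argument is identical.
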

\begin{proof}
We consider $\mathbb P(|R| \geq (\alpha+\epsilon +\delta)n)$ and $\mathbb P(|R|\leq (\alpha-\epsilon-\delta)n)$ separately. To bound the former, it suffices to prove an upper bound on 
\begin{align*}
\mathbb P(|R\cup S_M|\geq (\alpha+\epsilon+\delta)n) & =\mathbb P(|R-S_M|\geq (\alpha+\epsilon+\delta)n - |S_M|) \\
 & \leq \mathbb P(|R-S_M| \geq (\alpha+\delta)n)\,,
\end{align*}
which by Hoeffding's inequality satisfies 
\[\P (|R-S_M| - \alpha(n-|S_M|) \geq \delta n+\alpha|S_M|) \leq e^{-\frac {(\delta n+\alpha|S_M|)^2}{2(n-|S_M|)M^2}}\leq e^{- {\delta ^2 n}/({2M^2})}\,.
\]
Similarly bounding $\mathbb P(|R| \leq (\alpha-\epsilon-\delta)n) 
\leq \mathbb P (|R-S_M| \leq (\alpha-\delta) n)
$
by Hoeffding's inequality and combining the two via a union bound concludes the proof.
\end{proof}

We prove Theorem~\ref{mainthm:sw} for $q>2$ separately for $\lambda$ that is below, above and at $\lambda_c$.

\subsection{The supercritical regime: proof of Theorem~\ref{mainthm:sw} for the case $\lambda\in (\lambda_c, \lambda_S)$}
To prove Theorem~\ref{mainthm:sw} for  $\lambda\in (\lambda_c,\lambda_S)$, let  $\rho>0$, and define the set of configurations,
\[A_\rho = \left\{\sigma \in \{1,...,q\}^n: \max_{r=1,..,q} \left | \sum_{i=1}^n \boldsymbol 1\{\sigma_i=r\} - \frac nq\right|<\rho n\right\}\,.
\]
Now consider the Markov chain $(X_t)_{t\geq 0}$ and let $v_t = (v_t^1,...,v_t^q)$ be the corresponding vector counting the number of sites in each state in $X_t$. We need the following claim. 

\begin{claim} \label{claim:want-to-show}
Consider Swendsen--Wang dynamics with $\lambda=n(1-e^{-\beta/n})$ for $\lambda<\lambda_S$; there exists $\rho_0(\lambda,q),c(\rho,\lambda,q),C(\lambda,q)>0$ such that that for every $\rho<\rho_0$
\begin{equation}\label{eq:want-to-show}
\max_{X_0 \in A_\rho} \mathbb P_{X_0} (X_1\notin A_\rho) \lesssim Ce^{-cn}\,.
\end{equation}
\end{claim}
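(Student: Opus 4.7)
The plan is to control the two stages of the Swendsen--Wang update separately: first show that, with overwhelming probability, the random graph produced by the percolation step has almost all of its mass in small clusters, and then use this to show via Hoeffding that the random recoloring keeps every color class within $\rho n$ of $n/q$.

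\emph{Step 1 (choice of parameters).} Since $\lambda < \lambda_S = q$, we may fix
\[
\rho_0 = \rho_0(\lambda,q) < \tfrac{1}{\lambda} - \tfrac{1}{q}\,,
\]
so that for every $\rho \le \rho_0$ and every $X_0 \in A_\rho$, the color-class sizes $|V_1|,\dots,|V_q|$ all satisfy $|V_i| \le (\tfrac{1}{q}+\rho)n$ and hence the effective percolation parameter $\lambda_i := |V_i|\, p = |V_i|\lambda/n$ is uniformly bounded by some $\lambda^\star < 1$ depending only on $\lambda,q,\rho$.

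\emph{Step 2 (few vertices in large clusters).} Conditionally on $X_0$, the first stage of the dynamics produces independent subcritical Erd\H{o}s--R\'enyi random graphs $G_i \sim \cG(|V_i|,p)$ with $|V_i|p \le \lambda^\star < 1$. Apply Lemma~\ref{lem:large-cluster-vertices} with the target level $\rho/2$ (in each color class): there exists $M = M(\lambda^\star,\rho)$ and $c_1(\lambda^\star)>0$ such that
\[
\P\bigl(|S_M(G_i)| \ge \tfrac{\rho}{2}\,|V_i|\bigr) \lesssim \exp(-c_1 \rho\, |V_i|/2)\,.
\]
Since $|V_i| \gtrsim n/q$, a union bound over $i=1,\dots,q$ and additivity of $S_M$ across the $G_i$'s yields
\[
\P\bigl(|S_M(\omega)| \ge \tfrac{\rho}{2}\,n\bigr) \;\lesssim\; e^{-c_2 n}
\]
for some $c_2 = c_2(\lambda,q,\rho) > 0$, where $\omega$ denotes the FK configuration produced by Stage~1.

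\emph{Step 3 (concentration of the recoloring).} On the event $\{|S_M(\omega)| \le \tfrac{\rho}{2}n\}$, condition on $\omega$ and apply Lemma~\ref{lem:hoeffding} once per color $r \in \{1,\ldots,q\}$ with $\alpha = 1/q$, $\epsilon = \rho/2$, and $\delta = \rho/2$: the number of vertices of color $r$ in the new configuration $X_1$, call it $|R_r|$, satisfies
\[
\P\bigl(\bigl|\,|R_r| - n/q\,\bigr| \ge \rho n \given \omega\bigr) \;\le\; 2\exp\Bigl(-\frac{\rho^2 n}{8 M^2}\Bigr)\,.
\]
A union bound over $r = 1,\ldots,q$ bounds the conditional probability that $X_1 \notin A_\rho$ by $2q \exp(-\rho^2 n/(8M^2))$.

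\emph{Step 4 (combining).} Putting Steps~2 and~3 together via a union bound,
\[
\max_{X_0 \in A_\rho} \P_{X_0}(X_1 \notin A_\rho) \;\lesssim\; e^{-c_2 n} + 2q\,e^{-\rho^2 n/(8 M^2)} \;\lesssim\; C e^{-c n}
\]
for suitable $c = c(\rho,\lambda,q) > 0$ and $C = C(\lambda,q) > 0$, as claimed.

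The only delicate point is the choice of $\rho_0$ in Step~1: we genuinely need the percolation inside every color class to be uniformly subcritical, and this is precisely where the hypothesis $\lambda < \lambda_S = q$ is used. Everything else is a quantitative version of the two-stage heuristic, replacing the ``no large cluster'' bound of~\cite{GoJe99,GSV15} with the much stronger Lemma~\ref{lem:large-cluster-vertices}, which is what turns $\exp(-c\sqrt n)$ into $\exp(-cn)$.
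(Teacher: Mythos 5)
Your proof follows exactly the same two-stage decomposition as the paper: Lemma~\ref{lem:large-cluster-vertices} applied to each of the $q$ subcritical percolation restrictions to show $|S_M(\omega_1)|<\rho n/2$ with probability $1-e^{-cn}$, then Lemma~\ref{lem:hoeffding} with $\epsilon=\delta=\rho/2$ and a union bound over colors for the recoloring step. One small remark: your threshold $\rho_0 = 1/\lambda - 1/q$ is the correct condition guaranteeing $(\tfrac1q+\rho)\lambda<1$, whereas the paper's stated $\rho_0 = 1-\lambda/q$ is off by a factor of $\lambda$ (it would fail to ensure subcriticality since $\lambda>\lambda_s>1$ in the regime where the claim is used), so your version actually fixes a minor slip in the source.
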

\begin{proof}
Consider a fixed $X_0\in A_\rho$. In the $\cG(n,p)$ step of the Swendsen--Wang dynamics, we consider the color components separately. 
For each of the $q$ colored components a new edge configuration is sampled according to $\pi_{v_0^i, \lambda}$ where $i=1,...,q$; call the edge configuration we obtain $\omega_1^i$ and note that by definition of Swendsen--Wang dynamics, the clusters of $\{\omega_1^i\}_{i=1}^q$ will all be disconnected. Then since $\|v_0 - (\frac nq,...,\frac nq)\|_\infty <\rho n$ and $\lambda<\lambda_S=q$, if $\rho<1-\lambda/q=:\rho_0$, every colored component is sub-critical in the $\cG(n,p)$ step. Thus, for all $i=1,...,q$, by Lemma~\ref{lem:large-cluster-vertices}, for some $c(\lambda)>0$, if $\rho<1-\lambda/q$, for every $M\geq M_0(\lambda,\rho)$ and every $\delta>0$,
\[\mathbb P_{X_0} (|S_M(\omega_1^i)| \geq \delta n) = \pi_{v_0^i,\lambda}(|S_M|\geq \delta n) \lesssim e^{-c\delta n}\,.
\]
Union bounding over the $q$ different such components, we obtain 
\[\mathbb P_{X_0} \big( \mbox{$\bigcup_{i=1}^q$} \{|S_M (\omega_1^{i} )| \geq \delta n\}\big) \lesssim e^{-c\delta n}\,.
\]
In that case, if $\delta= \frac \rho{2q}$ and $\omega_1$ is the edge configuration induced on the whole graph after the $\cG(n,p)$ step of the dynamics, there exists $c(\lambda,q)>0$ so that for $M\geq M_0(\lambda,\rho)$,
\[\mathbb P_{X_0} \big (|S_M (\omega_1)| \geq \tfrac {\rho n}2\big ) \lesssim e^{-c\rho n}\,. \]
We can then split up
\[\mathbb P_{X_0} (X_1 \notin A_\rho) \leq \mathbb P_{X_0} (|S_M (\omega_1)| \geq \tfrac {\rho n}2)+ \P_{X_0} (X_1 \notin A_\rho\mid |S_M (\omega_1)| < \tfrac {\rho n}2)\,,
\] 
and consider the coloring step of the Swendsen--Wang dynamics. Then we obtain
\begin{align*}
\mathbb P_{X_0}(X_1 \notin A_{\rho}\mid |S_M (\omega_1)|  < \tfrac{\rho n}2) & \leq \mathbb P_{X_0} (\mbox{$\bigcup_{i=1}^q$} \{|v_1^i- \tfrac nq | \geq \rho n\}\mid |S_M (\omega_1)|< \tfrac {\rho n}2)\,.
\end{align*}
By an application of Lemma~\ref{lem:hoeffding} with $\epsilon=\delta=\frac {\rho}2$ and a union bound, the above is, for $\rho<1-\lambda/q$ and $M\geq M_0(\lambda,\rho)$, bounded above by
\begin{align*}
 2q \exp\left (-\rho^2 n/(8M^2)\right)\,. 
\end{align*}
Since all the above estimates were uniform in $X_0\in A_\rho$, we obtain the desired.
\end{proof}

 By Corollary~\ref{cor:Potts-phase-transition}, since $\beta$ is such that $\lambda>\lambda_c$, for every small $\rho>0$, we have $\mu_{n,\lambda,q} (A_\rho^c)>\frac 12$. If $X_0$ is  such that $v_0 = (\frac nq,...,\frac nq)$, clearly $X_0\in A_\rho$, and by Claim~\ref{claim:want-to-show} and a union bound, since $\lambda<\lambda_S$, there exists $c(\rho,\lambda,q)>0$ such that for every $\rho<\rho_0$,
\[\mathbb P_{X_0} (X_{Ce^{cn/2}}\in A_\rho^c) \lesssim e^{-cn/2} \,.
\]
The definition of total variation mixing time then implies $\tmix \gtrsim e^{cn/2}$ as desired. 
\qed

\subsection{The subcritical regime: proof of Theorem~\ref{mainthm:sw} for $\lambda\in (\lambda_s, \lambda_c)$}
We first prove the following consequence of Lemma~\ref{lem:large-cluster-vertices}.

\begin{lemma}\label{lem:supercritical-large-cluster}
Consider $\cG (n,p)$ with $np=\lambda>1$. There exist $c(\lambda),c'(\lambda)>0$ such that for every $\rho>0$ and $\epsilon>0$ sufficiently small and for every $M\geq M_0(\lambda,\rho)$, we have 
\[
\pi_{n,\lambda} (\{|\sL_1-\theta_\lambda|\geq \epsilon n)\}\cup  \{|S_M-\cC_1|\geq \rho n\}) \lesssim e^{-c\rho n}+e^{-c'\epsilon^2 n}\,.
\] 
\end{lemma}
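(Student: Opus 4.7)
The plan is to reduce the event $\{|S_M\setminus\cC_1|\geq \rho n\}$ to the subcritical setting of Lemma~\ref{lem:large-cluster-vertices} via the classical duality principle for supercritical Erd\H{o}s--R\'enyi random graphs. The first event, $\{|\sL_1-\theta_\lambda n|\geq \epsilon n\}$, is controlled directly by Proposition~\ref{fact:giant-component}, which contributes the term $e^{-c'\epsilon^2 n}$; it then suffices, by a union bound, to estimate the probability of $\{|S_M\setminus\cC_1|\geq \rho n\}$ intersected with the good event $\cE:=\{|\sL_1-\theta_\lambda n|<\epsilon n\}$.

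To handle this, I would expose the giant cluster $\cC_1$ via a breadth-first search and work with the conditional law of $\omega$ on $V\setminus \cC_1$. By the Markov property of Bernoulli percolation, conditional on $\cC_1=C$, this restriction is distributed as $\cG(n-|C|,p)$ further conditioned on the event that its largest component has size at most $|C|$. The classical Erd\H{o}s--R\'enyi duality asserts that the effective rate $\lambda^\ast:=\lambda(1-\theta_\lambda)=\lambda e^{-\lambda \theta_\lambda}$ satisfies $\lambda^\ast<1$ whenever $\lambda>1$ (this follows, e.g., from the elementary inequality $\lambda>1+\log \lambda$, which gives $\theta_\lambda>\tfrac{\log\lambda}\lambda$ and hence $\lambda e^{-\lambda\theta_\lambda}<1$). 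Consequently, on $\cE$ and for $\epsilon>0$ small enough in terms of $\lambda$, the remainder graph has parameter $(n-|\cC_1|)p\leq (1+\lambda^\ast)/2 <1$, placing it firmly in the subcritical regime. Fact~\ref{fact:cluster-exp-decay} then guarantees that in this subcritical regime the conditioning event $\{\sL_1'\leq |\cC_1|\}$ holds with probability $1-o(1)$, so conditioning affects probabilities only by a $1+o(1)$ factor.

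With this reduction in hand, the proof would conclude by applying Lemma~\ref{lem:large-cluster-vertices} to the subcritical remainder: choosing $M\geq M_0(\lambda,\rho)$ uniformly in the exposed $\cC_1$, the (conditional) probability that $|S_M(\omega\restriction_{V\setminus \cC_1})|\geq \rho n$ is at most $2e^{-c\rho n}$ for some $c(\lambda)>0$. Since clusters of $V\setminus\cC_1$ in $\omega$ coincide with the clusters of $\omega\restriction_{V\setminus\cC_1}$, one has $|S_M\setminus\cC_1|=|S_M(\omega\restriction_{V\setminus\cC_1})|$; averaging over $\cC_1$ and combining with the bound on the first event via a union bound then yields the stated estimate.

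The only real subtlety is justifying the application of Lemma~\ref{lem:large-cluster-vertices} in the presence of the conditioning on $\{\sL_1'\leq |\cC_1|\}$, since that lemma is stated for unconditioned $\cG(m,p')$. But as this event has probability $1-o(1)$ in the subcritical remainder, its effect is absorbed into the constants; the rest of the argument is a routine combination of the duality principle with the subcritical estimate already developed.
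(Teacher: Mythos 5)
Your proposal is correct and follows essentially the same route as the paper: a union bound, concentration of $\sL_1$ via Proposition~\ref{fact:giant-component}, and an application of Lemma~\ref{lem:large-cluster-vertices} to the subcritical remainder on $V\setminus\cC_1$. You are somewhat more careful than the paper in flagging that conditioning on $\cC_1$ being the \emph{largest} cluster also conditions the remainder on $\{\sL_1'\leq|\cC_1|\}$, but as you correctly observe this event has probability $1-o(1)$ in the subcritical remainder, so the conditioning is harmlessly absorbed into the constants.
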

\begin{proof}By a union bound, rewrite the left-hand side above as 
\begin{align*}
\pi_{n,\lambda} (\{|\sL_1 -& \theta_\lambda|\geq \epsilon n\} \cup \{|S_M-\cC_1| \geq \rho n\}) \\ 
\leq & \,\pi_{n,\lambda}(|S_M-\cC_1| \geq \rho n\mid |\sL_1 -\theta_\lambda| < \epsilon n)  + \pi_{n,\lambda} (|\sL_1-\theta_\lambda|\geq \epsilon n)\,.
\end{align*}
Since $\lambda>1$, by Fact~\ref{fact:giant-component}, we have that $\pi_{n,\lambda}(\omega:|\sL_1 -\theta_\lambda|\geq \epsilon n)\leq e^{-c\epsilon^2 n}$ for some $c(\lambda)>0$. Now suppose $\cL_1\geq (\theta_\lambda-\epsilon) n$ and note that conditioning on $\cC_1$, since the remaining graph is disconnected from $\cC_1$, it must be distributed as $\cG(n-\sL_1,p)$ which, since $n-\sL_1 \leq (1-\theta_\lambda+\epsilon)n$,  is subcritical for small $\epsilon$. In that case, by Lemma~\ref{lem:large-cluster-vertices}, given $n-\sL_1\leq(1-\theta_\lambda+\epsilon) n$, there exists $c(\lambda)>0$ such that for every $\rho>0$, there exists $M_0(\lambda,\rho)>0$ so that for $M\geq M_0$,
\[\pi_{n-\sL_1,\lambda}(|S_M| \geq \rho n)\leq e^{-c \rho n}\,;
\]
combined  with the union bound, this implies the desired.
\end{proof}

The proof of Theorem~\ref{mainthm:sw} for $\lambda\in (\lambda_s,\lambda_c)$  is a slight modification of the proof for $\lambda\in(\lambda_c,\lambda_S)$. Recall the definitions of $\theta_\lambda,a_\lambda$ and $b_\lambda$ from~\S\ref{sec:prelims}. Fix $\lambda>\lambda_s$. In decreasing order, let the number of vertices in  each color class of $\sigma$ be $v^1,...,v^q$ and let
\[A_\rho=\left\{\sigma\in \{1,...,q\}^n:| v^1-a_\lambda| \leq \rho n, v^2 \leq \tfrac  {n-v^1}{q-1}+\rho n \right\}\,.
\]
By Corollary~\ref{cor:Potts-phase-transition}, since $\lambda<\lambda_c$, for sufficiently small $\rho$, we have $\mu_{n,\lambda,q} (A_\rho ^c) > \frac 12$. Therefore, it suffices by definition of total variation mixing to prove the following. 
\begin{claim}\label{claim:want-to-show-2}
Consider Swendsen--Wang dynamics with $\lambda=n(1-e^{-\beta/n})$ for $\lambda>\lambda_s$; there exist $\rho_0(\lambda,q),c(\rho,\lambda,q),C(\lambda,q)>0$ such that for every $\rho<\rho_0$,
\begin{align}\label{eq:want-to-show-2}
\max_{X_0 \in A_{\rho}} \mathbb P_{X_0} (X_1\notin A_\rho) \lesssim Ce^{-cn}\,;
\end{align}
\end{claim}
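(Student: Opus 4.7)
The plan is to mimic the structure of Claim \ref{claim:want-to-show} but adapted to the asymmetric regime of $(\lambda_s,\lambda_c)$: every $X_0 \in A_\rho$ has one color class $V_1$ of size $v^1 = a_\lambda n \pm \rho n$, while for $i \geq 2$ every $V_i$ has size at most $b_\lambda n + O(\rho) n$, where $b_\lambda = (1-a_\lambda)/(q-1)$. By the proposition following the definition of $F_\lambda$, $\lambda a_\lambda > 1 > \lambda b_\lambda$; consequently, provided $\rho < \rho_0(\lambda,q)$ is small enough, the $\cG(|V_i|, p)$ graphs $\omega_1^i$ sampled in the first stage of Swendsen--Wang are uniformly supercritical on $V_1$ and uniformly subcritical on every $V_i$ with $i \geq 2$. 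Write $\omega_1 = \bigcup_i \omega_1^i$ for the resulting FK configuration.

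For the percolation stage, I will apply Lemma \ref{lem:supercritical-large-cluster} to $\omega_1^1$ and Lemma \ref{lem:large-cluster-vertices} to each $\omega_1^i$ with $i \geq 2$; after a union bound this yields, for any prescribed $\rho'>0$ and $\epsilon'>0$ and all $M \geq M_0(\lambda,\rho')$, an event $\cE$ of probability at least $1 - C e^{-c\rho' n} - C' e^{-c'(\epsilon')^2 n}$ on which (i) $\omega_1$ has a unique giant cluster $\cC^*$, contained in $V_1$, with $\bigl||\cC^*| - \theta_{\lambda v^1/n}\,v^1\bigr| \leq \epsilon' n$, and (ii) $|S_M(\omega_1)\setminus \cC^*| \leq q\rho' n$. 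Conditional on $\cE$, in the coloring stage let $r^* \in \{1,\ldots,q\}$ be the (uniformly random) color of $\cC^*$; since all remaining clusters have size at most $M$, Lemma \ref{lem:hoeffding} applied to the restriction of the coloring to non-giant clusters shows, after union bounding over the $q$ colors, that the number of non-giant vertices receiving each color $r$ lies in $(n-|\cC^*|)/q \pm (\rho' + \delta) n$ except with probability at most $2q\exp(-\delta^2 n/(2M^2))$. The new color-class sizes therefore satisfy $u^{r^*} = |\cC^*| + (n-|\cC^*|)/q + O((\rho'+\delta)n)$ and $u^r = (n-|\cC^*|)/q + O((\rho'+\delta)n)$ for $r \neq r^*$, so $r^*$ is necessarily the new dominant color.

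To conclude $X_1 \in A_\rho$, I will use the fixed-point identity $F_\lambda(a_\lambda)=a_\lambda$ together with the Jacobian attractiveness $|F_\lambda'(a_\lambda)| < 1$ (both from the proposition following the definition of $F_\lambda$). A first-order expansion gives
\[
\frac{u^{r^*}}{n} = F_\lambda\!\left(\tfrac{v^1}{n}\right) + O(\rho'+\delta+\epsilon') = a_\lambda + F_\lambda'(\xi)\bigl(\tfrac{v^1}n - a_\lambda\bigr) + O(\rho'+\delta+\epsilon'),
\]
for some $\xi$ between $v^1/n$ and $a_\lambda$. Choosing $\rho$ small enough that $|F_\lambda'(\xi)| \leq \kappa < 1$ on that interval, and then $\rho', \delta, \epsilon' \ll (1-\kappa)\rho$ (which only tightens the constants in the exponential bounds), we obtain $|u^{r^*} - a_\lambda n| \leq \rho n$. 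The remaining constraint $u^r \leq (n-u^{r^*})/(q-1) + \rho n$ for $r \neq r^*$ is immediate because each such $u^r$ differs from $(n-u^{r^*})/(q-1)$ by at most the same $O((\rho'+\delta)n)$ error. Since all probability estimates are uniform in $X_0 \in A_\rho$, this yields \eqref{eq:want-to-show-2}.

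The main difficulty lies in the final bookkeeping: the errors from percolation fluctuations ($\epsilon' n$), from Hoeffding in the coloring step ($(\rho'+\delta)n$), and from transporting the initial deviation $|v^1 - a_\lambda n| \leq \rho n$ through one application of $F_\lambda$ must all combine inside the $\rho n$ window defining $A_\rho$. The attractiveness $|F_\lambda'(a_\lambda)| < 1$ leaves a constant multiplicative slack $(1-\kappa)\rho n$ to absorb these fluctuation errors, provided the parameters $\rho', \delta, \epsilon'$ (and the threshold $M$) are chosen appropriately; this contraction property of $F_\lambda$ near $a_\lambda$ is what ultimately closes the argument.
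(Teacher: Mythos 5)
Your proof is correct and follows essentially the same route as the paper: supercritical/subcritical split of the first stage, the giant-component and $S_M$ estimates (Lemma~\ref{lem:supercritical-large-cluster} plus Lemma~\ref{lem:large-cluster-vertices}), Lemma~\ref{lem:hoeffding} for the coloring stage, and the contraction of $F_\lambda$ at $a_\lambda$ to absorb the propagated error into the $\rho n$ window. One small wording slip: on the event $\cE$ the non-giant clusters are not all of size at most $M$ — rather, the vertices in such clusters number at most $q\rho' n$ — but this is exactly the hypothesis Lemma~\ref{lem:hoeffding} requires, and the bound you write is the one that follows.
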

\begin{proof}
Fix any $X_0\in A_\rho$ and let $(v_0^1,...,v_0^q)$ be its corresponding color class vector. By definition of $a_\lambda$, for some $\rho'(\lambda,q)>0$ there exists $\gamma\in (F'(a_\lambda),1)$ such that if $|v_0^1-a_\lambda|\leq \rho'n$, we have $|F(v_0^1/n)-a_\lambda|<\gamma|v_0^1/n - a_\lambda|$. From now on we take $\rho<\rho'$.

Consider the $\cG(n,p)$ step of the Swendsen--Wang dynamics. Since $\lambda>\lambda_s$, $\lambda a_\lambda>1$ and $\lambda b_\lambda<1$, so that for $\rho>0$ sufficiently small, the first colored class of $X_0$ will be supercritical in the $\cG(n,p)$ step and the other $q-1$ will all be subcritical; call the $q$ random graph configurations we obtain in this step $\omega_1^i$ for $i=1,...,q$. Now fix such a $\rho>0$ and let $\epsilon=\frac {(1-\gamma)\rho} {2(q+1)}$. By Fact~\ref{fact:giant-component}, we obtain that for some $c(\lambda)>0$, 
\[\mathbb P_{X_0}(|\sL_1(\omega_1^1)- v_0^1 \theta_{\lambda v_0^1/n} | \geq \epsilon n) \lesssim e^{-c\epsilon^2 n}\,.
\]
Moreover, by Lemma~\ref{lem:large-cluster-vertices}, we also have for some $c(\lambda)>0$, for every $M\geq M_0(\lambda,\epsilon)$,
\[\mathbb P_{X_0} \big(|\sL_1(\omega_1^1)- v_0^1\theta_{\lambda v_0^1/n}| \geq \epsilon n\}\cup \mbox{$ \bigcup_{i=1}^q$} \{|S_M (\omega_1^{i} )-\cC_1(\omega_1^1)| \geq \epsilon n\big) \lesssim e^{-c\epsilon n}\,.
\]
On the complement of the above event, $\omega^1$ has a single giant component of size $\theta n$ for $\theta n\in (v_0^1\theta_{\lambda v_0^1/n} -\epsilon n, v_0^1\theta_{\lambda v_0^1/n }+\epsilon n)$,
 and $|S_M-\cC_1|\leq q\epsilon n$. By Lemma~\ref{lem:hoeffding}, with probability $1-e^{-c \theta n}$, the largest color class of $X_1$ will be the one containing $\cC_1(\omega_1^1)$ so without loss, we also assume that is the case.

At that stage, observe that  $\mathbb E[v_1^1 \mid \theta] = \theta n +\frac 1q (1-\theta)n $ and $\mathbb E[v_1^i\mid \theta] = \frac 1q (1-\theta) n $ for $i\neq 1$. Then, first assigning the giant component a color, then using Lemma~\ref{lem:hoeffding}, we obtain that for some $c(M,\lambda)>0$, for every $M\geq M_0(\lambda,\epsilon)$,
\[\mathbb P_{X_0} \left(|v_1^1-F(v_0^1/n)|\geq q\epsilon n+ \tfrac {q-1}q \epsilon n+\delta n \right)\lesssim e^{- c \delta^2 n}+e^{-c\epsilon n}\,.
\]
By a similar bound on the other $q-1$ coloring steps and the choice $\delta=(1-\gamma)\rho/2$,
\[\mathbb P_{X_0} \left( \| (v_1^1,...v_1^q) - \left(F(v^1_0/n), \tfrac {1-F(v_0^1/n)}{q-1},...,\tfrac{1-F(v_0^1/n)}{q-1}\right)\|_\infty \geq (1-\gamma) \rho n\right) \lesssim e^{-c\epsilon n}\,.
\]
By the choice of $\gamma$ and the triangle inequality, this implies 
\[\mathbb P_{X_0}(X_1\notin A_\rho) \leq \mathbb P_{X_0} \left (\|(v_1^1,...,v_1^q)- (a_\lambda,b_\lambda,...,b_\lambda)\|_\infty \geq \rho n\right) \lesssim e^{-c\epsilon n}\,,
\]
which by uniformity of the estimates over $X_0\in A_\rho$, concludes the proof.
\end{proof}

\subsection{The critical point: proof of Theorem~\ref{mainthm:sw} for $\lambda= \lambda_c$}
In Corollary~\ref{cor:Potts-phase-transition}, for every $q>2$, either $\gamma(q)\geq\frac12$ in which case Claim~\ref{claim:want-to-show-2} concludes the proof, or $1-\gamma(q)\geq \frac 12$ in which case Claim~\ref{claim:want-to-show} concludes the proof.
\qed

\section{Slow mixing of Glauber dynamics for the FK model}
Since for $q$ noninteger, Chayes--Machta dynamics activates a strict subset of the vertices at a time, we will need to use a modified argument to prove Theorem~\ref{mainthm:fk}. We instead construct a bottleneck set $S$ and bound its bottleneck ratio. For $A,B\subset \Omega$, let  
\[
Q(A,B)=\sum_{\omega\in A}\pi (\omega) P(\omega,B)=\sum_{\omega\in A}\pi(\omega)\sum_{\omega'\in B}P(\omega,\omega')\,,
\]
for a chain with stationary distribution $\pi$ and kernel $P$; the Cheeger constant of $\Omega$ is
\begin{align}\label{eq:conductance-gap}
\Phi=\max_{S\subset \Omega}\frac {Q(S,S^c)}{\pi(S)\pi(S^c)}\,,\qquad \mbox{and satisfies}\qquad 2\Phi\geq \gap \geq  {\Phi^2}/{2}\,.
\end{align}
In order to prove the lower bound of Theorem~\ref{mainthm:fk}, we prove such a lower bound on the inverse spectral gap of the Chayes--Machta dynamics, then using Proposition~\ref{thm:Ullrich-comparison} and a standard comparison between the spectral gap and mixing time~\eqref{eq:tmix-gap}, we obtain the desired for the Glauber dynamics. Before the proof of Theorem~\ref{mainthm:fk}, we prove some preliminary equilibrium bottleneck estimates for the mean-field FK model. 

The following lemma that was fundamental to the understanding of the distribution $\pi_{n,\lambda,q}$ in~\cite{BGJ} is very useful for the proof of Theorem~\ref{mainthm:fk}. 
\begin{lemma}[{\cite[Lemma 3.1]{BGJ}}]\label{lem:fundamental-lemma}
Fix $\alpha\in [0,1]$; consider a mean-field FK realization $\omega\sim \pi_{n,\lambda,q}$. Independently color each cluster of $\omega$ red with probability $\alpha$ and let $R$ be the collection of all red vertices. Conditional on $R$, the subgraph $\omega\restriction_{R}$ is distributed according to $\pi_{|R|,\lambda,rq}$ and the subgraph $\omega\restriction_{V-R}$ is distributed according to $\pi_{|V-R|,\lambda,(1-r)q}$.
\end{lemma}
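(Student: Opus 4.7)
The plan is to write out the joint law of $(\omega,R)$ explicitly and identify its factored form. The starting observation is that, because every cluster of $\omega$ is colored collectively, each cluster is entirely red or entirely non-red; in particular, $R$ is a union of clusters of $\omega$, and every edge of $\omega$ has both endpoints either in $R$ or in $V\setminus R$. Equivalently, all $|R|\cdot|V\setminus R|$ cross-edges are absent from $\omega$.

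Given this structural fact, I would compute the joint probability of $(\omega, R)$ directly from the definition of $\pi_{n,\lambda,q}$ and the independent Bernoulli colorings:
\[
\P(\omega,R) \;\propto\; p^{|E(\omega)|}(1-p)^{\binom{n}{2}-|E(\omega)|}\, q^{k(\omega)} \cdot \alpha^{k_R(\omega)}(1-\alpha)^{k_{V\setminus R}(\omega)},
\]
where $k_R(\omega)$ and $k_{V\setminus R}(\omega)$ count the clusters of $\omega$ lying inside $R$ and $V\setminus R$ respectively. Because clusters of $\omega$ split cleanly along $R/(V\setminus R)$, we have $k(\omega)=k(\omega\restriction_R)+k(\omega\restriction_{V\setminus R})$ and $|E(\omega)|=|E(\omega\restriction_R)|+|E(\omega\restriction_{V\setminus R})|$. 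Writing $q^{k(\omega)}=q^{k(\omega\restriction_R)}\cdot q^{k(\omega\restriction_{V\setminus R})}$, pairing the coloring factors with the corresponding powers of $q$, and peeling off the $(1-p)^{|R|\cdot|V\setminus R|}$ factor coming from the forbidden cross-edges, the joint density factors as
\[
\bigl[p^{|E(\omega\restriction_R)|}(1-p)^{\binom{|R|}{2}-|E(\omega\restriction_R)|}(\alpha q)^{k(\omega\restriction_R)}\bigr] \cdot \bigl[p^{|E(\omega\restriction_{V\setminus R})|}(1-p)^{\binom{|V\setminus R|}{2}-|E(\omega\restriction_{V\setminus R})|}((1-\alpha)q)^{k(\omega\restriction_{V\setminus R})}\bigr] \cdot (1-p)^{|R|\cdot|V\setminus R|}.
\]
The last factor depends only on $|R|$, so upon conditioning on $R$ it is absorbed into the normalizing constant. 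The remaining product is exactly the density of $\pi_{|R|,\lambda,\alpha q}\otimes\pi_{|V\setminus R|,\lambda,(1-\alpha)q}$, yielding both the claimed marginals and the conditional independence of $\omega\restriction_R$ and $\omega\restriction_{V\setminus R}$ given $R$.

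The only point requiring genuine care is the structural observation that cluster-coloring forces $R$ to be a union of clusters of $\omega$, so that no edge of $\omega$ can cross between $R$ and $V\setminus R$; once that is in hand, the factorization of $q^{k(\omega)}$ and of the product measure on edges is routine. There is no real obstacle beyond careful bookkeeping of the normalizing constants and of the $(1-p)^{|R|\cdot|V\setminus R|}$ factor produced by the forbidden cross-edges.
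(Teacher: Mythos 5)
Your proof is correct and is essentially the standard argument from Bollob\'as--Grimmett--Janson, which the paper cites (Lemma 3.1 of \cite{BGJ}) rather than reproving: the key observation that $R$ is a union of clusters forces all cross-edges to be absent, after which the joint density factorizes and the $\alpha^{k_R}(1-\alpha)^{k_{V\setminus R}}$ weights combine with $q^{k(\omega)}$ to produce the scaled cluster weights $\alpha q$ and $(1-\alpha)q$. One small remark worth making explicit: the edge probability in both restricted measures remains $p=\lambda/n$ (not $\lambda/|R|$), so the subscript $\lambda$ in $\pi_{|R|,\lambda,\alpha q}$ is a slight abuse of the paper's $p=\lambda/n$ convention; your factored expression uses the correct $p$ throughout and thus gets this right, and the $r$ appearing in the statement is simply a typo for $\alpha$.
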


The following corollary follows from iterating the process of Lemma~\ref{lem:fundamental-lemma} $\lfloor q\rfloor$ times. 

\begin{corollary}\label{cor:fundamental-lemma}
Consider a mean-field FK realization $\omega\sim \pi_{n,\lambda,q}$. Independently color each cluster of $\omega$ color $r_1,...,r_q$ with probability $q^{-1}$ each and $r_{0}$ otherwise. Then letting $R_0,R_1,...,R_q$ be the sets of vertices colored each of $r_0,...,r_q$, the subgraph restricted to $R_i$ for $i=1,...,q$ is distributed according to $\pi_{|R_i|,\lambda,1}$. The subgraph restricted to $R_0:= V-\bigcup_{i=1}^q R_i$ is distributed according to $\pi_{|R_0|,\lambda,q-\lfloor q\rfloor}$. Moreover, the distributions of the $\lceil q\rceil$ color classes are (conditionally on $R_0,...,R_q$) independent.
\end{corollary}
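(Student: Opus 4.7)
The plan is to prove the corollary by iteration: apply Lemma~\ref{lem:fundamental-lemma} exactly $\lfloor q\rfloor$ times, peeling off one color class per step, and track the parameters of the successive FK sub-realizations. There is no serious obstacle here; the only bookkeeping is to check that a sequential color-peeling scheme genuinely reproduces the simultaneous coloring in the statement, and that the conditional parameter after peeling decreases as $q,q-1,q-2,\ldots$ as it must.

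First I would rewrite the simultaneous coloring as a sequential one: each cluster receives $r_1$ with probability $1/q$; among the clusters that did not receive $r_1$, each receives $r_2$ with (conditional) probability $1/(q-1)$; more generally, a cluster still uncolored after round $k-1$ receives $r_k$ with conditional probability $1/(q-k+1)$; and after $\lfloor q\rfloor$ rounds any leftover cluster is declared $r_0$. A telescoping computation $\prod_{j=1}^{i-1}\bigl(1-\tfrac{1}{q-j+1}\bigr)\cdot\tfrac{1}{q-i+1}=\tfrac{1}{q}$ shows that each of $r_1,\ldots,r_{\lfloor q\rfloor}$ receives a given cluster with marginal probability $1/q$ and $r_0$ with marginal probability $1-\lfloor q\rfloor/q$, matching the corollary's prescription.

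Next I would invoke Lemma~\ref{lem:fundamental-lemma} once per round. At round one, applying the lemma to $\omega\sim\pi_{n,\lambda,q}$ with $\alpha=1/q$ yields $\omega\restriction_{R_1}\sim\pi_{|R_1|,\lambda,1}$ and, conditionally independently given $R_1$, $\omega\restriction_{V-R_1}\sim\pi_{|V-R_1|,\lambda,q-1}$. Inductively, at round $k+1$: the complement $V-\bigcup_{i\leq k}R_i$ already carries (by the induction hypothesis) a mean-field FK realization with parameter $q-k$, conditionally independent of the previously extracted subgraphs $\omega\restriction_{R_i}$; applying Lemma~\ref{lem:fundamental-lemma} to this realization with $\alpha=1/(q-k)$ produces $R_{k+1}$ with $\omega\restriction_{R_{k+1}}\sim\pi_{|R_{k+1}|,\lambda,1}$ and a new complement with parameter $q-k-1$, conditionally independently given the refined partition. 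After $\lfloor q\rfloor$ iterations, the residual set $R_0:=V-\bigcup_{i=1}^{\lfloor q\rfloor}R_i$ inherits $\omega\restriction_{R_0}\sim\pi_{|R_0|,\lambda,q-\lfloor q\rfloor}$.

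Finally, the joint conditional independence of the $\lceil q\rceil$ subgraphs given $(R_0,\ldots,R_{\lfloor q\rfloor})$ is obtained by stacking the pairwise conditional independence produced by each invocation of the lemma (at stage $k+1$ the new red piece is conditionally independent of the new complement given the partition, hence of everything already extracted). The one point requiring care is that $\alpha$ must be updated from $1/q$ to $1/(q-k)$ at round $k+1$ rather than naively reused, since by that stage the ambient FK parameter has dropped to $q-k$; this update is exactly what makes the product identity above collapse to the uniform marginal $1/q$ per color, so the sequential scheme and the simultaneous coloring induce the same joint law and the corollary follows.
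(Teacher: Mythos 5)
Your proposal is correct and matches the paper's approach exactly: the paper states without elaboration that the corollary ``follows from iterating the process of Lemma~\ref{lem:fundamental-lemma} $\lfloor q\rfloor$ times,'' and your write-up simply supplies the bookkeeping that iteration entails---recasting the simultaneous coloring as a sequential peeling with $\alpha=1/(q-k)$ at round $k+1$, verifying via the telescoping product that the marginals match, and stacking the pairwise conditional independences from each invocation into the joint conditional independence of all $\lceil q\rceil$ pieces.
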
 

(Note that when $q$ is integer, the set $R_0$ is deterministically empty.) Via Lemma~\ref{lem:fundamental-lemma}, we prove the following analogues of Lemmas~\ref{lem:large-cluster-vertices} and~\ref{lem:supercritical-large-cluster} when $q<1$. 

\begin{lemma}\label{lem:remainder-vertices}
Consider the mean-field FK model on $n$ vertices with parameters $(p,q)$ with $q<1$ and $np=\lambda<\lambda_c=q$. There exists $c(\lambda,q)>0$ such that for all $\rho>0$ sufficiently small, there exists $M_0(\lambda,\rho)>0$ such that for all $M\geq M_0$,
\[\pi_{n,\lambda,q}(|S_M|\geq \rho n) \lesssim e^{-c\rho n}\,.
\]
\end{lemma}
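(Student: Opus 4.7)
The plan is to realize $\pi_{n,\lambda,q}$ as the law of the red subgraph of a slightly larger subcritical Erdős--Rényi graph, conditioned on the red set having exactly $n$ vertices, then transfer the bound of Lemma~\ref{lem:large-cluster-vertices} across this conditioning. The key observation enabling this is that, since $q<1$, applying Lemma~\ref{lem:fundamental-lemma} with coloring probability~$q$ to an Erdős--Rényi measure ($q=1$) produces a red subgraph distributed as an FK measure with weight $q\cdot 1=q$, matching the target.

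Concretely, set $n'=\lceil n/q\rceil$ and let $\omega'\sim\cG(n',p)$ with $p=\lambda/n$. The effective density $n'p=\lambda/q+O(1/n)$ is strictly less than $1$ since $\lambda<q$, so $\omega'$ is subcritical. Color each cluster of $\omega'$ red independently with probability $q$, let $R$ be the red set, and let $\omega_R:=\omega'\restriction_R$ be the red subgraph. By Lemma~\ref{lem:fundamental-lemma}, the conditional law of $\omega_R$ given $\{|R|=n\}$ is exactly $\pi_{n,\lambda,q}$. Combining the trivial bound $\P(A\mid B)\leq \P(A)/\P(B)$ with the observation that every cluster of $\omega_R$ is a full cluster of $\omega'$ (so $|S_M(\omega_R)|\leq|S_M(\omega')|$),
\[
\pi_{n,\lambda,q}(|S_M|\geq \rho n)\leq \frac{\P(|S_M(\omega')|\geq\rho n)}{\P(|R|=n)}\,.
\]
Lemma~\ref{lem:large-cluster-vertices}, applied to $\omega'$ with effective parameter $\lambda/q<1$ and threshold $\rho n=(\rho n/n')n'\approx\rho q\cdot n'$, bounds the numerator by $e^{-c(\lambda,q)\,\rho n}$ for every $M\geq M_0(\lambda,q,\rho)$.

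The main obstacle is the polynomial lower bound $\P(|R|=n)\gtrsim n^{-1/2}$. Conditional on $\omega'$, the variable $|R|=\sum_i|\cC_i(\omega')|X_i$ (with $X_i$ i.i.d.\ $\mathrm{Bernoulli}(q)$) has mean $qn'=n+O(1)$, and by Fact~\ref{fact:cluster-exp-decay} variance $q(1-q)\sum_i|\cC_i|^2=O(n)$ on typical $\omega'$. Since an individual vertex is isolated in $\omega'$ with probability $(1-p)^{n'-1}\to e^{-\lambda/q}>0$, subcriticality produces a linear number of singleton clusters $n_1=\Theta(n')$ with high probability, and the singleton contribution to $|R|$ is $\mathrm{Binomial}(n_1,q)$. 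The local central limit theorem for the binomial, combined with $O(\sqrt n)$-concentration of the non-singleton contribution, yields $\P(|R|=n\mid\omega')\geq c_1 n^{-1/2}$ on a set of $\omega'$ of probability bounded away from zero, so $\P(|R|=n)\gtrsim n^{-1/2}$. Putting everything together gives $\pi_{n,\lambda,q}(|S_M|\geq \rho n)\lesssim\sqrt n\,e^{-c\rho n}\leq e^{-c'\rho n}$ for $n$ large, as desired; the $\sqrt n$ factor is absorbed because $\rho$ is fixed and $\rho n\to\infty$.
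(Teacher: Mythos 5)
Your proposal is correct and follows the same route as the paper: realize $\pi_{n,\lambda,q}$ as the conditional law of the red subgraph of $\cG(\lceil n/q\rceil,p)$ given $|R|=n$ via Lemma~\ref{lem:fundamental-lemma}, note $|S_M(\omega\restriction_R)|\le|S_M(\omega)|$, apply Lemma~\ref{lem:large-cluster-vertices} to the subcritical $\cG(\lceil n/q\rceil,p)$, and divide by $\mathbb{P}(|R|=n)\gtrsim n^{-1/2}$. The only difference is that the paper cites \cite[Lemma~9.1]{BGJ} for the $n^{-1/2}$ lower bound on $\mathbb{P}(|R|=n)$, whereas you re-derive it from the singleton-cluster contribution and a local CLT for the binomial; that substitution is sound and self-contained, but does not change the structure of the argument.
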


\begin{proof}
We prove the desired using Lemma~\ref{lem:fundamental-lemma}. Consider the random graph $\cG(m,p)$ with the choice of $m=\lceil q^{-1}n\rceil$; applying Lemma~\ref{lem:fundamental-lemma} to $\cG(m,p)$ with $\alpha=q$, by \cite[Lemma~9.1]{BGJ}, for all $\lambda\neq q$, we have $\mathbb P(|R|=n) \geq \frac C{\sqrt m}$, for some $C(\lambda)>0$.
Then, we can write for any event $A\subset \Omega_{\rc}$,
\begin{align}\label{eq:remainder-vertices}
\sum_{l=1}^m \mathbb P_{\mathrm{col},m,\lambda}(|R|=l) \pi_{m,\lambda}(\omega \restriction_{R}\in A\mid R, |R|=l)=\mathbb E_{\mathrm{col},m,\lambda} [\pi_{m,\lambda}(\omega \restriction_{R}\in A\mid R)]\,,
\end{align}
where $\mathbb P_{\mathrm{col},m,\lambda}$ is the distribution over colorings of $\omega$, averaged over realizations of $\omega\sim \pi_{m,\lambda}$.
Letting $A=A_{\rho,M}=\{|S_M|\geq \rho n\}$, for every $R$ the probability on the right-hand side is bounded above by $\pi_{m,\lambda}(A_{\rho,M})$ which, by Lemma~\ref{lem:large-cluster-vertices}, satisfies
\[\pi_{m,\lambda}(A_{\rho,M})\lesssim e^{-c\rho n}\,,
\]
for some $c(\lambda)>0$ and for every $\rho>0$ and every $M\geq M_0(\lambda,\rho)$.
But by Lemma~\ref{lem:fundamental-lemma}, 
\[\pi_{m,\lambda}(\omega \restriction_{R}\in \cdot \mid R,|R|=l) \stackrel{d} = \pi_{l,\lambda,q}(\omega\in \cdot)\,,
\]
which combined with $\mathbb P_{\mathrm{col},m,\lambda}(|R|=n)\geq C/\sqrt m$ implies 
\[\pi_{n,\lambda,q}(|S_M|\geq {\rho n}) \lesssim \sqrt {q^{-1}n} e^{-c\rho n}\,. \qedhere
\]
\end{proof}

\begin{lemma}\label{lem:supercritical-remainder-vertices}
Consider the mean-field FK model on $n$ vertices with parameters $(p,q)$ with $q<1$ and $np=\lambda>\lambda_c=q$. There exists $c(\lambda,q)>0$ such that for all $\rho>0$ sufficiently small, there exists $M_0(\lambda,\rho)>0$ such that for all $M\geq M_0$,
\[\pi_{n,\lambda,q}(|S_M-\cC_1|\geq \rho n) \lesssim e^{-c\rho n}\,.
\]
\end{lemma}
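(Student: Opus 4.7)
The plan is to mimic the argument of Lemma~\ref{lem:remainder-vertices}, substituting the supercritical estimate Lemma~\ref{lem:supercritical-large-cluster} for the subcritical Lemma~\ref{lem:large-cluster-vertices} and adding a short case analysis to handle the possible colorings of the giant cluster. Specifically, I would set $m=\lceil q^{-1}n\rceil$ and consider $\omega\sim\cG(m,p)$, chosen as in the proof of Lemma~\ref{lem:remainder-vertices} so that, via Lemma~\ref{lem:fundamental-lemma}, independently coloring each cluster of $\omega$ red with probability $q$ and conditioning on $|R|=n$ yields $\omega\restriction_R\sim \pi_{n,\lambda,q}$. Since $\lambda>\lambda_c=q$, the effective mean degree $mp\asymp \lambda/q$ exceeds $1$, so $\cG(m,p)$ is supercritical. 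By \cite[Lemma~9.1]{BGJ}, $\mathbb{P}(|R|=n)\gtrsim 1/\sqrt{m}$, so it will suffice to show
\[
\mathbb{P}\big(|S_M(\omega\restriction_R)\setminus\cC_1(\omega\restriction_R)|\geq \rho n\big)\lesssim e^{-c\rho n}
\]
under the joint law of $\omega$ and the coloring; the $\sqrt{n}$ prefactor will then be absorbed at the cost of a slightly smaller $c$.

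The key structural observation is that $R$ is a union of entire clusters of $\omega$, so the components of $\omega\restriction_R$ are exactly those clusters of $\omega$ that happen to lie in $R$, with sizes unchanged; in particular $S_M(\omega\restriction_R)=R\cap S_M(\omega)$. I would then apply Lemma~\ref{lem:supercritical-large-cluster} to $\cG(m,p)$ (at effective parameter $\lambda/q>1$) with $\rho':=\rho q$ (so that $\rho' m=\rho n$) and $\epsilon:=C\sqrt{\rho}$ for a constant $C=C(\lambda,q)$ large enough that $e^{-c'\epsilon^2 m}\lesssim e^{-c\rho n}$. This produces a good event $\cG$ of probability at least $1-O(e^{-c\rho n})$ (for all $M$ above a threshold $M_0(\lambda,q,\rho)$) on which $|S_M(\omega)\setminus\cC_1^{(m)}|\leq \rho n$, where $\cC_1^{(m)}$ denotes the largest cluster of $\omega$. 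On $\cG$, the target bound now holds regardless of whether $\cC_1^{(m)}$ was colored red. If $\cC_1^{(m)}\subset R$, then since $\cC_1^{(m)}$ is the largest cluster of $\omega$ overall and lies entirely in $R$, it is also the largest cluster of $\omega\restriction_R$; hence $\cC_1(\omega\restriction_R)=\cC_1^{(m)}$ and
\[
|S_M(\omega\restriction_R)\setminus\cC_1(\omega\restriction_R)|=|R\cap(S_M(\omega)\setminus\cC_1^{(m)})|\leq |S_M(\omega)\setminus\cC_1^{(m)}|\leq \rho n,
\]
while in the complementary case $\cC_1^{(m)}\cap R=\emptyset$ we have $S_M(\omega\restriction_R)\subset S_M(\omega)\setminus\cC_1^{(m)}$, so $|S_M(\omega\restriction_R)\setminus\cC_1(\omega\restriction_R)|\leq |S_M(\omega\restriction_R)|\leq \rho n$ trivially.

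The hard part will be the parameter tracking in the reduction: a target bound of $\rho n$ in $\pi_{n,\lambda,q}$ corresponds to a bound of $\rho q\cdot m=\rho n$ in the auxiliary $\cG(m,p)$, and the Gaussian fluctuation $e^{-c'\epsilon^2 m}$ of the giant component must be dominated by the linear target rate $e^{-c\rho n}$, forcing $\epsilon$ to scale like $\sqrt{\rho}$; this is nevertheless compatible with the smallness hypothesis on $\epsilon$ in Lemma~\ref{lem:supercritical-large-cluster} once $\rho$ is sufficiently small. The virtue of performing the case analysis on the color of $\cC_1^{(m)}$ \emph{inside} the ambient probability space (rather than conditioning on it) is that Lemma~\ref{lem:fundamental-lemma}'s distributional identity is invoked only once, exactly as in Lemma~\ref{lem:remainder-vertices}, so the proof concludes by a union bound over $\cG^c$ and the two cases.
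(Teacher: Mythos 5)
Your proof is correct and follows essentially the same route as the paper: pass to $\cG(m,p)$ with $m=\lceil q^{-1}n\rceil$ via Lemma~\ref{lem:fundamental-lemma} (with $\alpha=q$) and the $\mathbb{P}(|R|=n)\gtrsim 1/\sqrt{m}$ estimate of \cite[Lemma 9.1]{BGJ}, then invoke Lemma~\ref{lem:supercritical-large-cluster} on the (now supercritical, since $\lambda/q>1$) auxiliary random graph. The paper simply asserts the pointwise monotonicity $\{\omega\restriction_R \in A_{\rho,M}\}\subset A_{\rho,M}$ in one line; your two-case analysis (according to whether $\cC_1^{(m)}\subset R$ or $\cC_1^{(m)}\cap R=\emptyset$) is exactly the correct justification of that inclusion. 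One minor simplification: you need not take $\epsilon=C\sqrt{\rho}$ in applying Lemma~\ref{lem:supercritical-large-cluster}; since that lemma is being used only for $\rho$ sufficiently small, a fixed small $\epsilon=\epsilon_0(\lambda,q)$ already makes $e^{-c'\epsilon^2 m}$ dominated by $e^{-c\rho n}$.
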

\begin{proof}
As before, consider $\cG(m,p)$ with $m=\lceil q^{-1} n\rceil$; by Lemma~\ref{lem:fundamental-lemma} with $\alpha=q$ and \cite[Lemma 9.1]{BGJ}, $\mathbb P(|R|=n)\geq C/\sqrt m$. Let $A=A_{\rho,M}= \{ |S_M-\cC_1|\geq \rho n\}$ in~\eqref{eq:remainder-vertices}. Then observe that $\pi_{m,\lambda}(\omega\restriction_R \in A_{\rho,M}) \leq \pi_{m,\lambda} (A_{\rho,M})$ and by Lemma~\ref{lem:supercritical-large-cluster}, $\pi_{m,\lambda} (A_{\rho,M}) \lesssim e^{-c\rho n}$. Altogether, plugging the above bounds in to~\eqref{eq:remainder-vertices} implies that there exists $c(\lambda)>0$ such that for all $\rho>0$ and all $M\geq M_0(\lambda,\rho)$,
\[\pi_{n,\lambda,q}(|S_M -\cC_1|\geq \rho n) \lesssim \sqrt {q^{-1} n} e^{-c\rho n}\,. \qedhere
\]
\end{proof}

\subsection{The supercritical/critical regime, $np=\lambda\in [\lambda_c,\lambda_S)$}
We first prove the desired mixing time lower bound for $\lambda\in [\lambda_c,\lambda_S)$, using the following bottleneck estimate.

\begin{lemma}\label{lem:bottleneck-estimate}
Consider the mean-field FK model on $n$ vertices with parameters $(p,q)$ where $q>2$ and $np=\lambda< \lambda_S$; there exists $c(\rho,M,\lambda,q)>0$ such that for all sufficiently small $\rho>0$, there exists $M_0(\lambda,\rho)$ such that for every $M\geq M_0$,
\[\pi_{n,\lambda,q} (\tfrac{\rho n}2 < |S_M|< \rho n \mid |S_M| < \rho n) \lesssim e^{-cn}\,.
\]
\end{lemma}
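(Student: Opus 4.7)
The plan is to decompose $\omega\sim\pi_{n,\lambda,q}$ via Corollary~\ref{cor:fundamental-lemma}, coloring each cluster with one of $\lceil q\rceil$ colors to obtain (conditionally independent) subconfigurations $\omega\restriction_{R_i}$ on $R_0,R_1,\ldots,R_{\lfloor q\rfloor}$.  Since $\lambda<\lambda_S=q$, whenever $|R_i|$ is close to its mean $n/q$ (resp.\ $(1-\lfloor q\rfloor/q)n$ for $R_0$), the effective parameter $|R_i|\lambda/n$ lies strictly below the subcritical threshold for each piece---i.e., $<1$ for $i\geq 1$ and $<q-\lfloor q\rfloor$ for $i=0$.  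This is exactly the regime in which Lemmas~\ref{lem:large-cluster-vertices} and~\ref{lem:remainder-vertices} yield exponential tails on $|S_M(\omega\restriction_{R_i})|$.

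Fix $\rho>0$ small enough (depending on $\lambda,q$) that these strict inequalities survive a deviation of $2\rho n$ in $|R_i|$, and define the good coloring event
\[
B=\Big\{\big||R_i|-n/q\big|\leq 2\rho n\ \forall\, i=1,\ldots,\lfloor q\rfloor\Big\}\cap \Big\{\big||R_0|-(1-\lfloor q\rfloor/q)n\big|\leq 2\rho n\Big\}.
\]
Writing $A_1=\{|S_M|<\rho n\}$ and $A_2=\{\tfrac{\rho n}{2}<|S_M|<\rho n\}$, I would condition on $(R_i)\in B$ and apply Lemmas~\ref{lem:large-cluster-vertices}--\ref{lem:remainder-vertices} at target deviation $\rho n/(2\lceil q\rceil)$ (which is a positive fraction of each $|R_i|$).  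This gives, for $M\geq M_0(\lambda,q,\rho)$ and a constant $c=c(\lambda,q)>0$ uniform over $(R_i)\in B$,
\[
\mathbb P\big(|S_M(\omega\restriction_{R_i})|\geq \tfrac{\rho n}{2\lceil q\rceil}\,\big|\,(R_i)\big)\lesssim e^{-c\rho n}.
\]
Since $|S_M(\omega)|=\sum_i|S_M(\omega\restriction_{R_i})|$, a union bound and the pigeonhole inequality give $\mathbb P(A_2\mid (R_i)\in B)\lesssim e^{-c\rho n}$ and $\mathbb P(A_1\mid (R_i)\in B)\geq \tfrac{1}{2}$ for large $n$, hence $\pi(A_2\cap B)\lesssim e^{-c\rho n}\mathbb P(B)$ and $\pi(A_1\cap B)\geq \tfrac12\mathbb P(B)$.

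It remains to remove the conditioning on $B$.  Both $A_1$ and $A_2$ entail $|S_M(\omega)|<\rho n$, so Lemma~\ref{lem:hoeffding} applied separately to each color class (with $\alpha=1/q$ or $\alpha=1-\lfloor q\rfloor/q$, $\epsilon=\rho$, $\delta=\rho$) gives
\[
\mathbb P(B^c\mid\omega)\leq 2\lceil q\rceil\, e^{-\rho^2 n/(2M^2)}\qquad\text{for every }\omega\text{ with }|S_M(\omega)|<\rho n.
\]
For $n$ large this is at most $\tfrac{1}{2}$, so $\pi(A_2)\leq 2\pi(A_2\cap B)$ while $\pi(A_1)\geq \pi(A_1\cap B)$.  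Combining,
\[
\frac{\pi(A_2)}{\pi(A_1)}\leq \frac{2\pi(A_2\cap B)}{\pi(A_1\cap B)}\lesssim \frac{e^{-c\rho n}\mathbb P(B)}{\mathbb P(B)}=e^{-c\rho n},
\]
and setting $c(\rho,M,\lambda,q):=c\rho/2$ absorbs the implicit prefactors for large $n$.

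The main obstacle is the calibration of constants on $B$: one must pick $\rho$ small enough that every piece is strictly subcritical \emph{and} rescale the internal $\rho'$ used in Lemmas~\ref{lem:large-cluster-vertices}--\ref{lem:remainder-vertices} (applied on $|R_i|$ vertices) so that the resulting decay rate is uniform over $(R_i)\in B$, while simultaneously keeping $\rho^2 n/M^2$ large enough for Hoeffding to erase $B^c$.  The crucial conceptual point is that the ambient probability $\mathbb P(B)$ cancels in the ratio, so no lower bound on the metastable mass $\pi(A_1)$ is needed---this is what allows the argument to succeed uniformly across $(\lambda_c,\lambda_S)$, where $\pi(A_1)$ itself may be exponentially small in $n$.
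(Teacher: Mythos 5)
Your proposal is correct and follows essentially the same route as the paper: both decompose via Corollary~\ref{cor:fundamental-lemma}, use Lemma~\ref{lem:hoeffding} to concentrate the color-class sizes on the good coloring event, and then invoke Lemmas~\ref{lem:large-cluster-vertices} and~\ref{lem:remainder-vertices} on the resulting subcritical pieces, with the coloring mass $\mathbb P(B)$ (resp.\ $\mathbb P_{\mathrm{col}}(\Gamma_\rho)$ in the paper) cancelling in the ratio. The only difference is bookkeeping---the paper conditions on $A_{\rho,M}$ first and bounds $\max_{\mathbf R\in\Gamma_\rho}\pi_{\mathbf R}(B_{\rho,M})/\pi_{\mathbf R}(A_{\rho,M})$ directly, while you intersect with the good coloring event before forming the ratio---but these are equivalent manipulations of the same estimate.
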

\begin{proof}
For $\rho,M>0$ define the sets
\begin{align*}
A_{\rho,M} & = \{\omega\in \Omega_{\rc}:|S_M(\omega)|<\rho n\}\,, \\
 B_{\rho,M}& =\{\omega\in \Omega_{\rc}:\tfrac {\rho n} 2< |S_M(\omega)|<\rho n\}\,.
 \end{align*}
In order to bound $\pi_{n,\lambda,q}(B_{\rho,M}\mid A_{\rho,M})$, use the coloring scheme described in Corollary~\ref{cor:fundamental-lemma}. Let $\mathcal P$ be the set of all possible partitions of $\{1,...,n\}$ into $\lceil q \rceil$ sets, i.e., the set of all possible colorings of FK configurations. Denote by $\mathbb P_{\mathrm {col}}$ the probability measure over colorings $(R_0,...,R_{\lfloor q\rfloor})$ averaged over $\pi_{n,\lambda,q}$, and $\mathbb P _{\mathrm{col}} (\cdot \mid \mathcal F)$ the probability measure over such colorings, averaged over $\pi_{n,\lambda,q}(\cdot \mid \mathcal F)$. For every $\mathbf R \in \mathcal P$, 
\[\pi_{\mathbf R} = \bigg\{
\begin{array}{ll}
\pi_{|R_i|,\lambda,1} & \mbox{ on $R_i$ for $i=1,...,\lfloor q\rfloor$} \\
\pi_{|R_0|,\lambda,q-\lfloor q\rfloor} & \mbox{ on $V-\bigcup_{i=1}^{\lfloor q\rfloor} R_i=:R_0$}
\end{array}\,.
\] 
Then we can write, by Corollary~\ref{cor:fundamental-lemma}, 
\begin{align*}
\pi_{n,\lambda,q} (& B_{\rho,M} \mid  A_{\rho,M})=   \sum_{\textbf R \in \mathcal P}\mathbb P_{\mathrm{col}}(\textbf R \mid A_{\rho,M}) \pi_{\textbf R} (B_{\rho, M} \mid A_{\rho,M})\,.
\end{align*}
By Lemma~\ref{lem:hoeffding}, since $A_{\rho,M}$ implies $|S_M|\leq \rho n$, for every $i=1,...,\lfloor q \rfloor$,
\[\mathbb P_{\mathrm{col}} \bigg(\left | |R_1|-\tfrac nq\right| <2\rho n \given A_{\rho,M} \bigg) \leq 2e^{-\rho^2 n/(2M^2)}\,.
\]
If $||R_i|-\frac nq| <2 \rho n$ for all $i=1,...,\lfloor q \rfloor$, we are left with a remainder set satisfying 
\[|R_0| \in \left ((1 - \tfrac {\lfloor q\rfloor}q- {2\rho \lfloor q\rfloor})n, (1 - \tfrac {\lfloor q \rfloor}q+ {2\rho \lfloor q \rfloor})n\right)\,.
\]
Define the event $\Gamma_\rho$ over colorings of the mean-field FK model as
\[\Gamma_\rho = \left \{\textbf R\in \mathcal P: \left| |R_i|-\tfrac nq \right|<2\rho n \mbox{ for all } i=1,...,\lfloor q \rfloor\right\}\,,
\]
so that the above conclusion can be written as 
\[
\mathbb P_{\mathrm{col}} (\Gamma_\rho^c \mid A_{\rho,M}) \lesssim \lfloor q\rfloor e^{-\rho^2 n/(2M^2)}\,.
\] 
Combined with the expression for $\pi_{n,\lambda,q}(B_{\rho,M}\mid A_{\rho,M})$, this implies that 
\begin{align*}
\pi_{n,\lambda,q} (B_{\rho,M} \mid A_{\rho,M}) & \leq  \max_{\textbf R \in \Gamma_\rho} \frac {\pi_{\textbf R} (B_{\rho,M})}{\pi_{\textbf R} (A_{\rho,M})} +\mathbb P_{\mathrm{col}} (\Gamma_{\rho}^c \mid A_{\rho,M}) \\ 
& \lesssim \max_{\textbf R \in \Gamma_\rho} \frac {\pi_{\textbf R} (|S_M|\geq \rho n/2)}{1-\pi_{\textbf R} (|S_M| \geq \rho n)}+e^{-\rho^2 n/(2M^2)}\,.
\end{align*}
By a union bound, the first term on the right-hand side is bounded above by 
\begin{align}\label{eq:conditional-probability-expansion}
\max_{\textbf R\in \Gamma_\rho} \frac {\pi_{|R_0|,\lambda,q-\lfloor q \rfloor} (|S_M|\geq \frac {\rho n}{2\lceil q \rceil})+\sum_{i=1,...,\lfloor q \rfloor} \pi_{|R_i|,\lambda,1} (|S_M| \geq \frac {\rho n}{2\lceil q \rceil})}{1-\pi_{|R_0|,\lambda,q-\lfloor q \rfloor} (|S_M|\geq \frac {\rho n}{\lceil q \rceil})-\sum_{i=1,...,\lfloor q \rfloor} \pi_{|R_i|,\lambda,1} (|S_M| \geq \frac {\rho n}{\lceil q \rceil})}\,.
\end{align}
We lower bound the numerator and upper bound the denominator simultaneously as they entail similar estimates.

Since $\lambda<\lambda_S=q$, there exists $\rho_0(\lambda,q)$ such that for all $\rho<\rho_0$, the random graph $\cG(\frac nq+2\lfloor q \rfloor \rho n,p)$ is subcritical and the FK model $\cG((1-\tfrac {\lfloor q \rfloor}q+ {2 \rho \lfloor q \rfloor})n,p,q-\lfloor q\rfloor)$ is also subcritical.
In other words, if $\rho<\rho_0(\lambda,q)$, for every $\textbf R\in \Gamma_\rho$, the distributions $\pi_{|R_i|,\lambda}$ for $i=1,...,\lfloor q \rfloor$ and $\pi_{|R_0|,\lambda,q-\lfloor q \rfloor}$ are all subcritical. As such, by Lemma~\ref{lem:large-cluster-vertices}, there exists $c(\lambda,q)>0$ such that for every $M\geq M_0(\lambda,\rho)$, 
\begin{align*}
\max_{\textbf R\in \Gamma_\rho}\sum_{i=1,...,\lfloor q\rfloor} \pi_{|R_i|,\lambda,1} (|S_M|\geq \tfrac{\rho n}{2\lceil q \rceil}) & \lesssim e^{-c\rho n/2}\,, \qquad \mbox{and} \\ 
\max_{\textbf R\in \Gamma_\rho} \sum_{i=1,...,\lfloor q\rfloor} \pi_{|R_i|,\lambda,1} (|S_M|\geq \tfrac{\rho n}{\lceil q \rceil}) & \lesssim e^{-c\rho n}\,.
\end{align*}
Similar bounds under $\pi_{|R_0|,\lambda,q-\lfloor q \rfloor}$ follow immediately for a different $c(\lambda,q)>0$ from Lemma~\ref{lem:remainder-vertices}. Altogether, this implies that for every $\rho<\rho_0$ and every $M\geq M_0(\lambda,\rho)$, there exists $c(\rho,M,\lambda,q)>0$ such that 
\[\pi_{n,\lambda,q} (B_{\rho,M}\mid A_{\rho,M}) \lesssim e^{-cn}\,. \qedhere
\]
\end{proof}

\begin{proof}[\emph{\textbf{Proof of Theorem~\ref{mainthm:fk}: the case $np=\lambda\in [\lambda_c,\lambda_S)$}}] For $\rho,M>0$, recall the definitions of $A_{\rho,M}$ and $B_{\rho,M}$.
By Proposition~\ref{prop:fk-phase-transition}, for $\lambda\in [\lambda_c,\lambda_S)$, for sufficiently small  $\rho>0$ and large $M$, there exists $c(\lambda,q)>0$ such that $\pi_{n,\lambda,q}(A_{\rho,M}^c)\geq c$. Then by~\eqref{eq:conductance-gap} it suffices  to prove an exponentially decaying upper bound on 
\begin{equation}\label{eq:want-to-show-fk-1}
\frac{Q(A_{\rho,M},A_{\rho,M}^c)}{\pi_{n,\lambda,q}(A_{\rho,M})}\lesssim  \max_{X_0\in A_{\rho,M}-B_{\rho,M}} P(X_0,A_{\rho,M}^c)+\pi_{n,\lambda,q}(B_{\rho,M} \mid A_{\rho,M})\,,
\end{equation}
where $P,Q$ are the transition matrix and edge measure, respectively, of the Chayes--Machta dynamics.
We first bound the first term in the right-hand side of~\eqref{eq:want-to-show-fk-1}. 

Consider some $X_0\in A_{\rho,M} - B_{\rho,M}$. In the activation stage of the Chayes--Machta dynamics, clusters are activated with probability $\frac 1q$; denote by $\mathcal A_1$ the set of activated vertices in this stage of the dynamics. Since $X_0\in A_{\rho,M}-B_{\rho,M}$, by Lemma~\ref{lem:hoeffding} with the choice of $\epsilon = \delta = \rho /2$,
\[
\mathbb P_{X_0} \left(\left||\mathcal A_1| -\tfrac nq\right| \geq \rho n\right)\leq 2e^{-\rho^2n/(8M^2)}\,.
\]
Since $\lambda<\lambda_S=q$, for $\rho<1-\lambda/q$, the random graph $\cG((\frac 1q+\rho) n,p)$ is subcritical. In that case, by Lemma~\ref{lem:large-cluster-vertices},  there exists $c(\lambda,\rho)>0$ such that for every $M\geq M_0(\lambda,\rho)$,
\begin{align*}
\mathbb P_{X_0} (X_1 \notin A_{\rho,M}\mid||\mathcal A_1| -\tfrac nq| < \rho n ) & \leq \mathbb P_{X_0} (|S_M (X_1\restriction_{\mathcal A_1})| \geq \tfrac {\rho n}2 \mid ||\mathcal A_1| -\tfrac nq| < \rho n)  \\
& \lesssim e^{-c\rho n/2}\,,
\end{align*}
Union bounding over the event $||\mathcal A_1|-n/q|\geq \rho n$ and its complement, there exists $c(\rho,M,\lambda,q)>0$ such that for every $\rho<1-\lambda/q$, for every $M\geq M_0(\lambda,\rho)$, 
\[\max_{X_0\in A_{\rho,M} - B_{\rho,M}} P(X_0,A_{\rho,M}^c) \lesssim e^{-cn}\,.
\]
Lemma~\ref{lem:bottleneck-estimate} yields a similar exponentially decaying upper bound on the second term on the right-hand side of~\eqref{eq:want-to-show-fk-1}, concluding the proof.
\end{proof}

\subsection{The subcritical/critical regime, $np=\lambda\in (\lambda_s,\lambda_c]$}

Recall the definitions of $\Theta^*(\lambda,q)$ and $\Theta_r(\lambda,q)$ corresponding to the drift function $g$. When $\lambda\in (\lambda_s,\lambda_c]$, we will need the following intermediate lemma, before proceeding to the analogue of Lemma~\ref{lem:bottleneck-estimate}. This is a straightforward adaptation of an argument of~\cite{BlSi14}. 

\begin{lemma}\label{lem:large-cluster-activated}
Consider the mean-field FK model on $n$ vertices with parameters $(p,q)$ with $np=\lambda\in(\lambda_s,\lambda_S)$; let $\omega_0\in A_{\rho,\epsilon,M} =\{\omega:\sL_1\geq (\Theta^*+\epsilon) n, |S_M-\cC_1|<\rho n\}$. Color $\cC_1$ red and independently color each cluster in $\omega_0-\cC_1$ red with probability $\frac 1q$; let $R$ be the set of all red vertices. Resample $\omega_0\restriction_{R}\sim\pi_{|R|,\lambda,1}$ and let $\omega_1$ be the resulting configuration on $n$ vertices; there exists $c(\rho,\epsilon,M,\lambda)>0$ so that for sufficiently small $\rho,\epsilon>0$, for every $M\geq M_0(\lambda,\rho)$, uniformly in $\omega_0\in A_{\rho,\epsilon,M}$,
\[ \mathbb P (\omega_1\notin \{\sL_1 \leq (\Theta^*+\epsilon) n\}) \lesssim e^{-cn}\,.
\]
\end{lemma}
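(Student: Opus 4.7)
The plan is to decompose $\omega_1$ into its inherited part on $V\setminus R$ and its resampled part on $R$, control each separately via Hoeffding concentration and the Erd\H{o}s--R\'enyi giant-component estimates, and then invoke the drift structure of the Chayes--Machta step. Because $\cC_1\subseteq R$ by construction, $\omega_1\restriction_{V\setminus R}=\omega_0\restriction_{V\setminus R}$ consists only of non-activated $\omega_0$-clusters distinct from $\cC_1$; the hypothesis $|S_M-\cC_1|<\rho n$ immediately bounds the largest such cluster by $\max(M,\rho n)$, which is strictly below $(\Theta^*+\epsilon)n$ once $\rho$ is chosen small relative to $\Theta^*$ and $n$ is large. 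So this piece contributes only subthreshold components to $\sL_1(\omega_1)$.

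For the resampled part, $\omega_1\restriction_R\sim \cG(|R|,p)$ with $p=\lambda/n$, so its effective Erd\H{o}s--R\'enyi parameter is $\lambda':=\lambda|R|/n$. Writing $|R|=\sL_1(\omega_0)+\sum_{C\neq \cC_1}|C|X_C$ with independent $X_C\sim\mathrm{Bern}(1/q)$, Lemma~\ref{lem:hoeffding} (applied on $V\setminus\cC_1$, exploiting the hypothesis $|S_M-\cC_1|<\rho n$ exactly as in the proof of Claim~\ref{claim:want-to-show}) concentrates $|R|$ around its mean $\sL_1(\omega_0)+(n-\sL_1(\omega_0))/q$ at deviations of order $(\rho+\delta)n$ with probability at least $1-2\exp(-\delta^2n/(2M^2))$. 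Conditionally on $|R|$, Proposition~\ref{fact:giant-component} (or Fact~\ref{fact:cluster-exp-decay} when $\lambda'\leq 1$) pins $\sL_1(\omega_1\restriction_R)$ at $\theta_{\lambda'}\cdot|R|$ (respectively at $O(\log n)$) with Gaussian tails of order $\exp(-c\epsilon^2 n)$.

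Combining both pieces and taking a union bound, $\sL_1(\omega_1)/n$ is exponentially concentrated at the drift value $f(\theta_0)$ for $\theta_0:=\sL_1(\omega_0)/n$, up to fluctuations of order $\rho+\delta$. To conclude $\mathbb P(\sL_1(\omega_1)>(\Theta^*+\epsilon)n)\lesssim e^{-cn}$, one then invokes the geometry of $f$ relative to the threshold $\Theta^*+\epsilon$ and the sign structure of $g=f-\mathrm{id}$ (whose roots in $(\Theta_{\min},1]$ are $\Theta^*<\Theta_r$), tuning $\rho$ and $\delta$ small enough relative to $\epsilon$ so that the drift-prescribed value falls on the side of the threshold making the bound $\exp(-c(\rho,\epsilon,M,\lambda)n)$ effective. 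The main obstacle is the delicate dependence on the initial value $\theta_0\in[\Theta^*+\epsilon,1]$: near $\theta_0=\Theta^*$ the relevant drift gap $|f(\theta_0)-(\Theta^*+\epsilon)|$ is small, so the Hoeffding deviation of $|R|$ and the $\cG(|R|,p)$ giant-component fluctuation must both be controlled at scales strictly finer than this drift gap, uniformly across the permissible range of $\theta_0$; handling this via a fine discretization of the initial value and a union bound over the activation and resampling randomness is the technical crux of the argument.
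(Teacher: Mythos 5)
The mechanics you outline---Hoeffding concentration of $|R|$ around $\mu_0=\theta_0 n+\tfrac1q(1-\theta_0)n$ via Lemma~\ref{lem:hoeffding}, then, conditionally on $|R|$, the giant-component concentration of Proposition~\ref{fact:giant-component} to pin $\sL_1(\omega_1\restriction_R)$ near $f(\theta_0)n$, and finally the sign structure of $g$---are exactly the paper's. The genuine problem is that you are bounding the wrong tail. The event that must be shown exponentially unlikely is $\{\sL_1(\omega_1)\leq(\Theta^*+\epsilon)n\}$, i.e.\ the giant \emph{dropping below} the threshold; the ``$\notin\{\sL_1\leq\cdots\}$'' in the displayed statement is a sign slip, as the last line of the paper's proof and the way the lemma is invoked in the proof of Theorem~\ref{mainthm:fk} (in the case $\cC_1(X_0)\subset\cA_1$) make clear. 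Your own concentration estimate places $\sL_1(\omega_1)/n$ near $f(\theta_0)=\theta_0+g(\theta_0)>\Theta^*+\epsilon$ for every admissible $\theta_0\geq\Theta^*+\epsilon$, so $\mathbb P(\sL_1(\omega_1)>(\Theta^*+\epsilon)n)$ is in fact close to $1$, and the bound you announce in your final paragraph cannot be made to hold by any tuning of $\rho$ and $\delta$. Relatedly, your first paragraph---bounding the largest non-activated cluster by $\max(M,\rho n)$ so that the inherited part is ``subthreshold''---only serves an upper-tail bound and is not needed for the intended statement, where one simply uses $\sL_1(\omega_1)\geq\sL_1(\omega_1\restriction_R)$.

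The step you defer as the ``technical crux''---uniformity over $\theta_0\in[\Theta^*+\epsilon,1]$, to be handled by a fine discretization of the initial value and a union bound---is both unexecuted and misdiagnosed. The lemma concerns a single fixed $\omega_0$, so no union bound over initial values arises; what is needed is a choice of $\delta$ depending only on $\epsilon$ (not on $\theta_0$) such that $f(\theta_0)-2\delta\geq\Theta^*+\epsilon$ for all $\theta_0\geq\Theta^*+\epsilon$. The paper obtains this by splitting at $\Theta_r-\epsilon$: for $\theta_0\leq\Theta_r-\epsilon$ take $\delta=\tfrac12\min_{[\Theta^*+\epsilon,\Theta_r-\epsilon]}g>0$, using continuity and strict positivity of $g$ on $(\Theta^*,\Theta_r)$, whence $f(\theta_0)-2\delta\geq\theta_0\geq\Theta^*+\epsilon$; for $\theta_0>\Theta_r-\epsilon$, \cite[Lemma 2.14]{BlSi14} gives $f(\theta_0)\geq\Theta_r-\epsilon$, and $\delta=\tfrac12(\Theta_r-\Theta^*-2\epsilon)$ suffices. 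In particular, your worry that the drift gap degenerates ``near $\theta_0=\Theta^*$'' does not arise, because $\theta_0\geq\Theta^*+\epsilon$ with $\epsilon$ fixed, and the resulting constant is permitted to depend on $(\rho,\epsilon,M,\lambda)$. A final small point: the subcritical alternative you allow for (invoking Fact~\ref{fact:cluster-exp-decay} when $\lambda|R|/n\leq1$) never occurs, since $\theta_0>\Theta^*>\Theta_{\mathrm{min}}$ and $\lambda<\lambda_S$ make $\cG(\mu_0-\rho n,p)$ supercritical once $\rho$ is small.
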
 
\begin{proof}
Fix any $\omega_0\in A_{\rho,\epsilon,M}$ and let $n\theta_0=\sL_1(\omega_0)$ for $\theta_0 \geq \Theta^*+\epsilon$. Then
\[\mathbb E\left[|R | \right] = \theta_0 n+\tfrac 1q (1-\theta_0) n=: \mu_0\,,
\]
so that by Lemma~\ref{lem:hoeffding}, for all $\rho>0$,
\[\mathbb P\left(||R| -\mu_0|\geq \rho n  \right)\leq 2e^{-\rho ^2 n/(8M^2)}\,.
\]
Therefore, we can write for every $\delta>0$,
 \begin{align*}
 \mathbb P(|\sL_1(\omega_1) - &\, nf(\theta_0)|\geq \delta n) \\
 & \leq \max_{a:|a-\mu_0|\leq \rho n} \pi_{a,\lambda} (|\sL_1 -nf(\theta_0)|\geq\delta n) + 2 e^{-\rho^2 n/(8M^2)}\,.
 \end{align*}
For all $\theta_0 \geq \Theta^*+\epsilon$, for sufficiently small $\rho>0$, using $\theta_0>\Theta^*>\Theta_{\mathrm{min}}$, since $\lambda<\lambda_S$,the random graph $\cG(\mu_0 -\rho n,p)$ is supercritical.
By continuity of $f$, for any $\delta>0$, there exists $\rho>0$ sufficiently small such that $\max_{a:|a-\mu_0|\leq \rho n}|f(\theta_0)-\theta_{\lambda a/n}|<\delta$; moreover, by Fact~\ref{fact:giant-component}, for every $\delta>0$
 \[\max_{a:|a-\mu_0|\leq \rho n} \pi_{a,\lambda}(|\sL_1 -\theta_{\lambda a/n}n |\geq \delta n)\lesssim e^{-c\delta^2 n}\,,
 \]
 for some $c(\lambda,\rho)>0$. Thus, for sufficiently small $\rho>0$, we have, for some $c(\rho,M,\lambda)>0$,
 \[\mathbb P(|\sL_1(\omega_1) -nf(\theta_0)| \geq 2\delta n) \lesssim e^{-c\delta^2 n}+ e^{-\rho^2 n/(8M^2)}\,.
 \] 
  
 It remains to argue that for $\epsilon>0$ sufficiently small, there exists $\delta>0$ such that for all $\theta_0\geq \Theta^*+\epsilon$, we have $nf(\theta_0) -2\delta n\geq (\Theta^* +\epsilon)n$. If $\theta_0> \Theta_r-\epsilon$, then by~\cite[Lemma 2.14]{BlSi14}, $f(\theta_0)\geq \Theta_r-\epsilon>\Theta^*+\epsilon$ and for small enough $\epsilon$ letting $\delta = \frac 12(\Theta_r -\Theta^* -2\epsilon)>0$ yields the desired. If $\theta_0 \leq\Theta_r-\epsilon$, since $g$ is positive on $(\Theta^*,\Theta_r)$, for $\epsilon$ small, $f(\theta_0)>\theta_0\geq \Theta^* +\epsilon$. By continuity of $f$, for $\epsilon<\frac 12 (\Theta_r-\Theta^*)$, letting $\delta= \frac 12 \min_{[\Theta^*+\epsilon,\Theta_r-\epsilon]} g$, we obtain
\[f(\theta_0)-2\delta \geq \theta_0 +g(\theta_0) -\min_{[\Theta^*+\epsilon,\Theta_r -\epsilon]} g\geq \theta_0 \geq \Theta^*+\epsilon\,.
\]
Together, for $\epsilon>0$ sufficiently small, there exists $c(\rho,\epsilon,M,\lambda)>0$ such that 
\[\mathbb P(\sL_1(\omega_1)\leq (\Theta^*+\epsilon)n ) \lesssim e^{-cn}\,.  \qedhere
\]
\end{proof}
The following is the analogue of Lemma~\ref{lem:bottleneck-estimate} in the presence of a giant component.
\begin{lemma}\label{lem:bottleneck-estimate-2}
Consider the mean-field FK model on $n$ vertices with parameters $(p,q)$ with $q>2$ and $np=\lambda\in (\lambda_s,\lambda_S)$; for every $\rho,\epsilon,M>0$ let 
\[E_{\rho,\epsilon,M}= \{\sL_1\geq (\Theta^*+\epsilon) n, \tfrac {\rho n}2<|S_M - \cC_1| <\rho n\}\,.
\]
There exists $c(\rho,M,\lambda,q)>0$ such that for sufficiently small $\rho,\epsilon>0$, for $M\geq M_0(\lambda,\rho)$,
\[\pi_{n,\lambda,q} \left(E_{\rho,\epsilon,M} \mid \sL_1\geq (\Theta^*+\epsilon)n, |S_M-\cC_1| < \rho n\right) \lesssim e^{-cn}\,.
\]
\end{lemma}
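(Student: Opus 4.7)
The plan is to adapt the coloring decomposition from the proof of Lemma~\ref{lem:bottleneck-estimate} to handle the giant cluster $\cC_1$. Set $A=\{\sL_1\geq (\Theta^*+\epsilon)n,\,|S_M-\cC_1|<\rho n\}$; since $E_{\rho,\epsilon,M}\subseteq A$, it suffices to show that $\pi_{n,\lambda,q}(|S_M-\cC_1|\geq \rho n/2\mid A)\lesssim e^{-cn}$.

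First I would apply Corollary~\ref{cor:fundamental-lemma} to $\omega$ to obtain a coloring partition $\mathbf{R}=(R_0,\ldots,R_{\lfloor q\rfloor})$, and let $J\in\{0,\ldots,\lfloor q\rfloor\}$ denote the color assigned to $\cC_1$. The key structural observation is that, conditional on $(\cC_1,J,\mathbf{R})$, the FK measure decomposes: by Corollary~\ref{cor:fundamental-lemma} the restrictions $\omega\restriction_{R_j}$ are independent across $j$, and within $R_J$ the event ``$\cC_1$ is a cluster of $\omega\restriction_{R_J}$'' further factors $\omega\restriction_{R_J}$, via $k(\omega\restriction_{R_J})=1+k(\omega\restriction_{R_J\setminus\cC_1})$, into an independent FK piece on $R_J\setminus\cC_1$ (Erd\H{o}s--R\'enyi if $J\geq 1$ and FK with weight $q'=q-\lfloor q\rfloor$ if $J=0$) and a piece on $\cC_1$ conditioned to be connected which does not contribute to $|S_M-\cC_1|$. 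In particular $|S_M-\cC_1|=\sum_{j}|S_M(\omega\restriction_{R_j\setminus\cC_1})|$ becomes a sum of independent terms.

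Next, since $A$ implies $|S_M(\omega\restriction_{V\setminus\cC_1})|<\rho n$, Lemma~\ref{lem:hoeffding} applied to the BGJ coloring of $\omega\restriction_{V\setminus\cC_1}$ shows that on a balanced event $\Gamma$---where every $|R_j\setminus\cC_1|$ is within $\pm 2\rho n$ of its conditional mean $(n-|\cC_1|)/q$ or $(q-\lfloor q\rfloor)(n-|\cC_1|)/q$---one has $\mathbb{P}_{\mathrm{col}}(\Gamma^c\mid A)\lesssim e^{-c\rho^2 n/M^2}$. On $\Gamma$, since $\lambda<\lambda_S=q$, each of the FK pieces on $R_j\setminus\cC_1$ is strictly subcritical: the effective parameter satisfies $\lambda|R_j\setminus\cC_1|/n<\lambda/q<1$ for the Erd\H{o}s--R\'enyi pieces and $<q-\lfloor q\rfloor$ for the $q'=q-\lfloor q\rfloor$ piece. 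Applying Lemma~\ref{lem:large-cluster-vertices} and Lemma~\ref{lem:remainder-vertices} to each piece and union-bounding over $j$, for $M\geq M_0(\lambda,\rho)$,
\[
\pi_{\mathbf{R},\cC_1}\bigl(|S_M-\cC_1|\geq \tfrac{\rho n}{2}\bigr)\lesssim e^{-c\rho n}\qquad\text{and}\qquad\pi_{\mathbf{R},\cC_1}\bigl(|S_M-\cC_1|\geq \rho n\bigr)\lesssim e^{-c\rho n}
\]
uniformly for $(\cC_1,\mathbf{R})\in\Gamma$, where $\pi_{\mathbf{R},\cC_1}$ denotes the conditional measure.

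Finally, following the template of Lemma~\ref{lem:bottleneck-estimate}, on $\Gamma$ we have
\[
\pi_{\mathbf{R},\cC_1}\bigl(|S_M-\cC_1|\geq \tfrac{\rho n}{2}\,\big|\, A\bigr) \leq \frac{\pi_{\mathbf{R},\cC_1}(|S_M-\cC_1|\geq \tfrac{\rho n}{2})}{1-\pi_{\mathbf{R},\cC_1}(|S_M-\cC_1|\geq \rho n)} \lesssim e^{-c\rho n},
\]
and combining with the estimate on $\mathbb{P}_{\mathrm{col}}(\Gamma^c\mid A)$ yields the desired $\pi_{n,\lambda,q}(E_{\rho,\epsilon,M}\mid A)\lesssim e^{-cn}$. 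The main obstacle is the FK factorization of $\omega\restriction_{R_J}$ conditional on $\cC_1$ being one of its clusters; this is an explicit computation with the FK product formula and the $k$-identity above, but is what enables the reduction of the analysis to the subcritical setting already handled in the proof of Lemma~\ref{lem:bottleneck-estimate}.
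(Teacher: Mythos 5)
Your proposal is correct and follows the paper's proof closely: the same Corollary~\ref{cor:fundamental-lemma} coloring decomposition conditioned on which color class contains $\cC_1$, the same balanced event controlled by Lemma~\ref{lem:hoeffding}, the same reduction to subcritical pieces estimated via Lemmas~\ref{lem:large-cluster-vertices} and~\ref{lem:remainder-vertices}, and the same ratio bound. The only stylistic variation is that you inline the factorization of the giant-containing piece (conditioning on $\cC_1$ directly and noting $\omega\restriction_{R_J\setminus\cC_1}$ is subcritical FK), where the paper routes this through Lemmas~\ref{lem:supercritical-large-cluster} and~\ref{lem:supercritical-remainder-vertices} and is explicit about the additional $e^{-c\Theta^* n}$ cost for identifying the global $\cC_1$ with the giant of $\omega\restriction_{R_J}$, a point your argument absorbs implicitly but should be stated.
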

\begin{proof}
Fix $np=\lambda>\lambda_s$ and for $\rho,\epsilon,M>0$, define the sets
\begin{align*}
A_{\rho,\epsilon,M}& =  \{ \sL_1\geq (\Theta^*+\epsilon) n,  |S_M-\cC_1|<\rho n\}\,, \\
B_{\rho,M}&  =\{\tfrac {\rho n}2<|S_M-\cC_1|<\rho n\}\,.
\end{align*}
We prove the lemma similarly to Lemma~\ref{lem:bottleneck-estimate}, after treating the giant component separately. Using the coloring scheme of Corollary~\ref{cor:fundamental-lemma}, with $\mathbb P_{\mathrm{col}}$ and $\pi_{\textbf R}$ defined as before,
by considering the color class to which $\cC_1$ belongs, and using symmetry, we obtain
\begin{align*}
\pi_{n,\lambda,q} ( E_{\rho,\epsilon,M} \mid  A_{\rho,\epsilon,M}) \!= & \,\, \!\tfrac q{\lfloor q \rfloor} \sum_{\textbf R\in \mathcal P}  \mathbb P_{\mathrm{col}}(\textbf R \mid \cC_1\subset R_1, A_{\rho,\epsilon,M}) \pi_{\textbf R} (E_{\rho, \epsilon,M} \mid \cC_1 \subset R_1,A_{\rho,\epsilon,M}) \\ 
& \!\!\!\!\!\!\!\!\!\!\!\!\!\!\!\!\!\!\! + \tfrac q{q-\lfloor q \rfloor} \sum_{\textbf R\in\mathcal P} \mathbb P_{\mathrm{col}} (\textbf R \mid \cC_1 \subset R_0, A_{\rho,\epsilon,M}) \pi_{\textbf R}(E_{\rho,\epsilon,M} \mid \cC_1\subset R_0,A_{\rho,\epsilon,M})\,.
\end{align*}
Call the two sums on the right hand side $\textbf I$ and $\textbf {II}$ respectively and consider them separately. 
Conditional on $A_{\rho,\epsilon,M}$ and $\cC_1\subset R_1$,  if $\mu_{\textbf I}=(\Theta^*+\epsilon) n+\tfrac 1q (1-\Theta^*-\epsilon) n$, 
\[\mathbb P_{\mathrm{col}} \left( |R_1|\geq \mu_{\textbf I}- 2\rho n) \mid \cC_1\subset R_1, A_{\rho,\epsilon,M}\right) \leq e^{-\rho^2 n/(2M^2)}\,,
\]
where we used Lemma~\ref{lem:hoeffding} with $\epsilon=\delta=\rho$.
Following the proof of Lemma~\ref{lem:bottleneck-estimate}, let 
\[\Gamma_{\rho}^{\textbf I} = \left\{ \textbf R: |R_1|\geq \mu_{\textbf I}-  2\rho n, \big| |R_i|- \tfrac {n-|R_1|}{q-1}\big|<2\rho n\mbox{ for all $i=2,...,\lfloor q \rfloor$} \right\}\,.
\]
By  Lemma~\ref{lem:hoeffding} and a union bound, $\mathbb P_{\mathrm{col}}((\Gamma_\rho^{\textbf I})^c\mid \cC_1\subset R_1, A_{\rho,\epsilon,M}) \leq 2\lceil q\rceil e^{-\rho^2n/(2M^2)}$. 
 
Using the fact that for every $\epsilon>0$, $E_{\rho,\epsilon,M}\subset B_{\rho,M}$, we can write 
\begin{align*}
\textbf I & \leq \mathbb P_{\mathrm{col}} ((\Gamma_\rho^{\textbf I})^c \mid \cC_1 \subset R_1,A_{\rho,\epsilon,M}) + \max_{\textbf R \in \Gamma_\rho^{\textbf I}} \pi_{\textbf R} (E_{\rho,\epsilon,M} \mid  A_{\rho,\epsilon,M},\cC_1 \subset R_1) \\ 
& \lesssim \tfrac {q\lceil q \rceil}{\lfloor q \rfloor} e^{-\rho^2 n/(2M^2)} + \tfrac{q}{\lfloor q \rfloor} \max_{\textbf  R \in \Gamma_\rho^{\textbf I}} \frac{\pi_{\textbf R} (B_{\rho,M}\mid \sL_1 \geq (\Theta^*+\epsilon)n,\cC_1 \subset R_1)}{\pi_{\textbf R} ( A_{\rho,\epsilon,M}\mid \sL_1 \geq (\Theta^*+\epsilon)n,\cC_1\subset R_1)}\,.
\end{align*}
If $\textbf R \in \Gamma_\rho^{\textbf I}$, for sufficiently small $\rho>0$, the definition of $\Theta^*$ and $\lambda>\lambda_s$ implies $\cG(|R_1|,p)$ is supercritical, and both $\cG(\frac {n-|R_1|}{q-1}+2\rho n,p)$ and $\cG(|R_0|,p,q-\lfloor q\rfloor)$ are subcritical.
By a union bound we can expand the numerator above as at most
\begin{align*}
&\max_{\textbf R\in \Gamma_\rho^{\textbf I}} \bigg (\pi_{|R_1|,\lambda}  (|S_M -\cC_1|\geq \tfrac {\rho n}{2\lceil q \rceil}\mid \sL_1 \geq (\Theta^*+\epsilon)n) + \mbox{$\sum_{i=2,...,\lfloor q \rfloor}$} \pi_{|R_i|,\lambda} (|S_M| \geq \tfrac {\rho n}{2\lceil q \rceil})\\
& \qquad \qquad+\pi_{|R_0|,\lambda,q-\lfloor q \rfloor} (|S_M|\geq \tfrac {\rho n}{2\lceil q \rceil})\bigg)+e^{-c\Theta^* n} \,,
\end{align*}
and analogously, the denominator as at least
\begin{align*}
& \min_{\textbf R \in \Gamma_{\rho}^{\textbf I}} \bigg (1-\pi_{|R_1|,\lambda} (|S_M-\cC_1|\geq \tfrac {\rho n}{\lceil q \rceil}\mid \sL_1 \geq (\Theta^*+\epsilon)n)-\pi_{|R_0|,\lambda,q-\lfloor q \rfloor} (|S_M|\geq \tfrac {\rho n}{\lceil q\rceil})  \\
& \qquad \qquad  - \mbox{$\sum_{i=2,...,\lfloor q \rfloor}$} \pi_{|R_i|,\lambda} (|S_M| \geq \tfrac {\rho n}{\lceil q \rceil})\bigg)-e^{-c\Theta^* n}\,.
\end{align*}
(In both of the above, we paid a cost of $e^{-c\Theta^* n}$ for the assumption $\sL_1(\omega)=\sL_1(\omega\restriction_{R_1})$.)
By  Lemma~\ref{lem:supercritical-large-cluster}, (for every $\sL_1\geq (\Theta^*+\epsilon)n$ and $\textbf R\in \Gamma_\rho^{\textbf I}$, $\cG(|R_1|-\sL_1,p)$ is subcritical) there exists $c(\lambda,q)>0$ such that for sufficiently small $\rho, \epsilon>0$ and every $M\geq M_0(\lambda,\rho)$,
\begin{align*}
\max_{\textbf R\in \Gamma_\rho^{\textbf I}} \pi_{|R_1|,\lambda,1} (|S_M-\cC_1|\geq \tfrac {\rho n}{\lceil q \rceil}\mid \sL_1 \geq (\Theta^*+\epsilon)n) & \lesssim e^{-c\rho n}\,, \qquad \mbox{and} \\
\max_{\textbf R\in \Gamma_\rho^{\textbf I}}  \pi_{|R_1|,\lambda,1} (|S_M-\cC_1| \geq \tfrac {\rho n}{\lceil q \rceil}\mid \sL_1 \geq (\Theta^*+\epsilon)n) & \lesssim e^{-c\rho n}\,.
\end{align*}

Moreover, as in the proof of Lemma~\ref{lem:bottleneck-estimate}, by Lemmas~\ref{lem:large-cluster-vertices} and~\ref{lem:remainder-vertices}, we also have that for $i=2,...,\lfloor q\rfloor$ that there exists $c(\lambda,q)>0$ such that for every  $M\geq M_0(\lambda,\rho)$,
\begin{align*}
\max_{\textbf R\in \Gamma_\rho^{\textbf I}} \pi_{|R_i|,\lambda,1} (|S_M|\geq \tfrac{\rho n}{\lceil q\rceil}) & \lesssim e^{-c\rho n}\,, \qquad \mbox{and} \\
 \max_{\textbf R\in \Gamma_\rho^{\textbf I}} \pi_{|R_0|,\lambda,q-\lfloor q\rfloor} (|S_M|\geq \tfrac{\rho n}{\lceil q \rceil} ) & \lesssim \sqrt n e^{-c\rho n}\,.
\end{align*}
Clearly, analogous bounds hold for all of the above when replacing $ \frac {\rho n}{\lceil q \rceil}$ with $\frac {\rho n}{2\lceil q \rceil}$.
Combining all of the above bounds and plugging them in to the right-hand side of 
\[\textbf I \lesssim 
\max_{\textbf  R \in \Gamma_\rho^{\textbf I}} \frac{\pi_{\textbf R} (B_{\rho,M}\mid \sL_1 \geq (\Theta^*+\epsilon)n,\cC_1 \subset R_1)}{\pi_{\textbf R} ( A_{\rho,\epsilon,M}\mid \sL_1 \geq (\Theta^*+\epsilon)n,\cC_1\subset R_1)}+ e^{-\rho^2 n/(2M^2)}\,,
\]
yields an exponentially decaying upper bound on the sum $\textbf I$. The bound on the sum $\textbf {II}$ is very similar. Letting $\mu_{\textbf {II}}= (\Theta^*+\epsilon)n+\frac {q-\lfloor q \rfloor}q (1-\Theta^*-\epsilon) n$, we define
\[\Gamma_{\rho}^{\textbf {II}} = \left \{ |R_0|\geq \mu_{\textbf {II}} -2\rho n, \big||R_i|-\tfrac {n-|R_0|}{\lfloor q \rfloor}\big|<2\rho n\mbox{ for all $i=1,...,\lfloor q \rfloor$} \right\}\,.
\]
As before, by Lemma~\ref{lem:hoeffding}, we can write 
\[\textbf {II} \lesssim e^{-\rho^2 n/(2M^2)} + \max_{\textbf R \in \Gamma_\rho^{\textbf {II}}} \frac {\pi_{\textbf R}(B_{\rho,M} \mid \sL_1 \geq (\Theta^* +\epsilon)n,\cC_1\subset R_0)}{\pi_{\textbf R} (A_{\rho,\epsilon,M}\mid \sL_1 \geq (\Theta^*+\epsilon)n,\cC_1\subset R_0)}\,,
\]
and observe that for every $\textbf R\in \Gamma_\rho^{\textbf {II}}$, since $\lambda\in (\lambda_s,\lambda_S)$, for sufficiently small $\rho>0$, the FK model $\pi_{|R_0|,\lambda,q-\lfloor q \rfloor}$ is supercritical and the random graphs $\cG(|R_i|,\lambda)$  are  subcritical for all $i=1,...,\lfloor q \rfloor$. By Lemmas~\ref{lem:large-cluster-vertices} and~\ref{lem:supercritical-remainder-vertices}, there exists $c(\lambda,q)>0$ such that for every $\rho>0$ sufficiently small and every $M\geq M_0(\lambda,\rho)$, 
\begin{align*}
\max_{\textbf R\in \Gamma_\rho^{\textbf {II}}} \pi_{|R_0|, \lambda,q-\lfloor q \rfloor} &(|S_M-\cC_1| \geq \tfrac {\rho n}{\lceil q \rceil}\mid \sL_1 \geq (\Theta^*+\epsilon)n)  \\ 
& \leq \max_{\textbf R\in \Gamma_\rho^{\textbf {II}}} \pi_{|R_0|, \lambda,q-\lfloor q \rfloor}  (|S_M-\cC_1| \geq \tfrac {\rho n}{\lceil q \rceil}) \lesssim e^{-c\rho n}\,, \quad \mbox{and} \\
 \max_{\textbf R\in \Gamma_\rho^{\textbf {II}}} \pi_{|R_i|,\lambda,1} & (|S_M| \geq \tfrac{\rho n}{\lceil q\rceil}) \lesssim e^{-c\rho n} \quad \mbox{ for all $i=1,...,\lfloor q\rfloor$}\,,
\end{align*}
and by the same reasoning, analogous bounds hold when replacing $\frac {\rho n}{\lceil q \rceil}$ with $\frac {\rho n}{2\lceil q \rceil}$. Then expanding the fraction in the upper bound on $\textbf {II}$ as done in the bound on $\textbf I$ implies there exists $c(\lambda,q)>0$ such that for sufficiently small $\rho,\epsilon>0$ and every $M\geq M_0(\lambda,\rho)$,
\[\pi(E_{\rho,\epsilon,M}\mid A_{\rho,\epsilon,M}) \lesssim \textbf I+\textbf {II} \lesssim e^{-c\rho n}+e^{-c\Theta^* n}+e^{-\rho^2 n/(2M^2)}\,. \qedhere
\] 
\end{proof}

We are now in position to complete the proof of Theorem~\ref{mainthm:fk}.

\begin{proof}[\textbf{\emph{Proof of Theorem~\ref{mainthm:fk}: the case $np=\lambda\in (\lambda_s,\lambda_c]$}}]
The proof when $\lambda\in (\lambda_s,\lambda_c]$ is similar to the extension of slow mixing for the Swendsen--Wang dynamics when $\lambda\in [\lambda_c,\lambda_S)$ to $\lambda\in (\lambda_s,\lambda_c]$. 
Recall that for fixed $\lambda>\lambda_s$, the two zeros of $g(\theta)=f(\theta)-\theta$ were denoted  $\Theta^*<\Theta_r$ so that $g$ is positive on $(\Theta^*,\Theta_r)$. 
We again use a conductance estimate to lower bound the inverse gap of the Chayes--Machta dynamics. Define for every $\rho,\epsilon,M>0$,
\begin{align*}
A_{\rho,\epsilon,M}& =  \{ \sL_1\geq (\Theta^*+\epsilon) n,  |S_M-\cC_1|<\rho n\}\,, \\
E_{\rho,\epsilon,M}&  =\{\sL_1\geq (\Theta^*+\epsilon) n,  \tfrac {\rho n}2<|S_M-\cC_1|<\rho n\}\,.
\end{align*}
As in~\eqref{eq:want-to-show-fk-1}, by~\eqref{eq:conductance-gap} it suffices to show an exponentially decaying upper bound on 
\begin{align*}
\frac{Q(A_{\rho,\epsilon,M}, A_{\rho,\epsilon,M}^c)}{\pi_{n,\lambda,q}(A_{\rho,\epsilon,M})} \lesssim \max_{X_0\in A_{\rho,\epsilon,M}- E_{\rho,\epsilon,M}} P(X_0,A_{\rho,\epsilon,M}^c) + \pi_{n,\lambda,q}(E_{\rho,\epsilon,M}\mid A_{\rho,\epsilon,M})\,,
\end{align*}
for sufficiently small $\rho,\epsilon>0$ and large $M$; 
this is because by Proposition~\ref{prop:fk-phase-transition}, for all small enough $\epsilon,\rho$,  we have $\pi_{n,\lambda,q}(A_{\rho,\epsilon,M}^c) \geq c>0$. We bound the two terms above separately as in the proof for $\lambda\in [\lambda_c,\lambda_S)$. First of all, note by Lemma~\ref{lem:bottleneck-estimate-2} that the second term on the right-hand side is bounded above by $e^{-cn}$ for some $c(\rho,M,\lambda,q)>0$ for every sufficiently small $\epsilon,\rho>0$ and every $M\geq M_0(\lambda,\rho)$.

Now consider any $X_0\in A_{\rho,\epsilon,M}-E_{\rho,\epsilon,M}$ and bound $P(X_0,A^c_{\rho,\epsilon,M})$ under the Chayes--Machta dynamics. We split the transition probability of the Chayes--Machta dynamics into the case when $\cC_1(X_0)$ is activated and $\cC_1(X_0)$ is not activated; let $\mathcal A_1$ denote the set of activated vertices. If $\cC_1(X_0)\not \subset \mathcal A_1$, we have $\mathbb E[|\mathcal A_1| \mid \cC_1 \not \subset \mathcal A_1]\leq  \frac 1q (1-\Theta^*-\epsilon)n$ and since $X_0\in A_{\rho,\epsilon,M}$, by Lemma~\ref{lem:hoeffding}, if $\epsilon>\rho$, then
\[\mathbb P_{X_0}\big(|\mathcal A_1| \geq \tfrac 1q (1-\Theta^*-\epsilon)n+\epsilon n\mid \cC_1(X_0)\not \subset \mathcal A_1\big) \leq 2 e^{-\epsilon^2 n/(2M^2)}\,.
\]
If $|\mathcal A_1| \leq \frac 1q (1-\Theta^*-\epsilon) n+\epsilon n$, for sufficiently small $\epsilon>0$, since $\lambda<\lambda_S=q$, the random graph  $\cG(|\mathcal A_1|,p)$ is subcritical, in which case with probability at least $1-e^{-c\Theta^* n}$, $\cC_1(X_1)=\cC_1(X_0)$. By Lemma~\ref{lem:large-cluster-vertices}, there exists $c(\rho,M,\lambda,q)>0$ such that for $0<\rho<\epsilon$ sufficiently small and every $M\geq M_0(\lambda,\rho)$, 
\begin{align*}
\mathbb P_{X_0}(|S_M(X_1)-\cC_1&(X_1)| \geq \rho n\mid \cC_1(X_0)\not \subset \mathcal A_1) \\
& \lesssim   \pi_{\frac 1q (1-\Theta^*+(q-1)\epsilon)n,\lambda,1} (|S_M|\geq \tfrac{\rho n}2) + e^{-\epsilon^2 n/(2M^2)} + e^{-c\Theta^* n} \\
& \lesssim e^{-c\rho n}+ e^{-\epsilon^2 n/(2M^2)}+e^{-c\Theta^* n}\,.
\end{align*}
Thus, for some $c(\rho,\epsilon,M,\lambda,q)>0$, for small enough $0<\rho<\epsilon$, and every $M\geq M_0(\lambda,\rho)$,
\[\max_{X_0\in A_{\rho,\epsilon,M}-E_{\rho,\epsilon,M}} \mathbb P_{X_0} (X_1\not \in A_{\rho,\epsilon,M} \mid \cC_1(X_0)\not \subset \mathcal A_1) \lesssim e^{-c n}\,.
\]

Now suppose that $\cC_1(X_0) \subset \mathcal A_1$; then one step of Chayes--Machta dynamics is described precisely by the set up of Lemma~\ref{lem:large-cluster-activated}, with $\rho$ replaced by $\rho/2$, yielding 
\[\max_{X_0\in A_{\rho,\epsilon,M}-E_{\rho,\epsilon,M}} \mathbb P_{X_0} (\sL_1 \leq (\Theta^*+\epsilon)n \mid  \cC_1(X_0) \subset \mathcal A_1) \lesssim e^{-c' n}
\] 
for some $c'(\epsilon,\rho,M,\lambda,q)>0$ for all sufficiently small $\epsilon,\rho>0$ and $M\geq M_0(\lambda,\rho)$. On the complement of that event, deterministically $\cC_1 (X_1)= \cC_1(X_1\restriction_{\cA_1})$.
By Lemma~\ref{lem:supercritical-large-cluster}, for some  $c(\lambda,q)>0$, for small $\epsilon,\rho>0$ and large $M\geq M_0(\lambda,\rho)$,
 \[\mathbb P(|S_M(X_1\restriction_{\mathcal A_1}) - \cC_1(X_1\restriction_{\cA_1})| \geq \rho n/2\mid \cC_1(X_0)\subset \mathcal A_1) \lesssim e^{-c\rho n/2}\,.
 \]
Combining the above, we deduce that there exists $c(\rho,\epsilon,M,\lambda,q)>0$ such that for all sufficiently small $0<\rho<\epsilon$, for every $M\geq M_0(\lambda,\rho)$,  we have $P(X_0,A_{\rho,\epsilon,M}^c)\lesssim e^{-cn}$, concluding the proof of Theorem~\ref{mainthm:fk} when $\lambda\in (\lambda_s,\lambda_c]$.
\end{proof}

\subsection*{Acknowledgment} R.G.\ thanks the theory group of Microsoft Research Redmond for its hospitality during the time some of this work was carried out. E.L.\ was supported in part by NSF grant DMS-1513403.

\bibliographystyle{abbrv}
\bibliography{mean-field-exponential-mixing}

\end{document}